\documentclass[a4paper,12pt]{article}
\usepackage{amssymb,amsmath,amsthm}
\usepackage{marvosym}
\usepackage{graphicx}

\usepackage{amsfonts}
\usepackage{latexsym}
\usepackage{color}
\usepackage[english]{babel}
\usepackage{hyperref}
\usepackage{enumitem}
\usepackage{float}
\usepackage{caption}
\usepackage{subcaption}
\textwidth 5 in
\textheight 8 in
%\evensidemargin 0cm
%\topmargin 0.05cm
%\headheight 0.38cm
%\headsep 0.85cm
%\footskip 2.35cm
%Valori standard:         art10          bk12       rep12
\usepackage[letterpaper,margin=1.2in,bottom=1.5in,top=1.5in]{geometry}

\numberwithin{equation}{section}

\newtheorem{lemma}{Lemma}[section]
\newtheorem{theorem}[lemma]{Theorem}

\newtheorem{corollary}[lemma]{Corollary}
\newtheorem{remark}[lemma]{Remark}

\theoremstyle{definition}

\newenvironment{assumption}[1]{%
	\manualtheoreminner
}{\endmanualtheoreminner}
\theoremstyle{plain}

\newtheorem{definition}[lemma]{Definition}

\newcommand{\ta}{{a}}
\newcommand{\tw}{{\tt w}}
\newcommand{\tf}{{\tt f}}
\newcommand{\tr}{{\tt r}}
\newcommand{\tb}{{\tt b}}

\newcommand{\R}{\mathbb R}

\newcommand{\Z}{\mathbb Z}

\newcommand{\N}{\mathbb N}
\newcommand{\T}{\mathbb T}

\newcommand{\ii }{{\rm i} }
\newcommand{\vphi}{\varphi}
\def\norma#1{\|#1  \|}
\def\norm#1{\|#1  \|}
\def\rA#1{\cA_{#1}}
\def\rS#1{\cS_{#1}}
\def\im{{\rm i}}
\def\vf{\varphi}
\def\varep{\varepsilon}
\def\ep{\varepsilon}

\def\timereg{C^{\infty}_b}
\def\cinfinito#1{C^{\infty}_b(\R;\rA\rho^{#1})}
\def\cinfinitos#1#2{C^{\infty}_b(\R;\rS{#1}^{#2})}
\def\semi#1#2#3{\wp_{\rho,#2}^{#1}\left(#3\right)}
\def\norfou#1#2#3{\semi{#1}{#2,N}{#3}}
\def\norfs#1#2#3{\norfou{#2}{#3}{#1}}
\def\rar{\cS_\rho}
\def\td{{\tt d}}

\newcommand{\csi}{\xi}

\def\cZ{{\mathcal{Z}}}
\def\cJ{{\mathcal{J}}}

 % coefficienti costanti nel linearized coniugato

\def\tk{{\tt k}}

\definecolor{awesome}{rgb}{1.0, 0.13, 0.32}
\definecolor{darkgr}{rgb}{0.0, 0.62, 0.42}
\definecolor{cyan}{rgb}{0.0, 0.72, 0.92}

\def\SF#1{\mathcal{S}_{\rho}^{#1}}

\def\uno{{\bf 1}}
\def\tM{{\tt M}}

\def\cF{{\mathcal{F}}}
\def\cH{{\mathcal{H}}}

\def\QS{{\mathcal{Q}\kern-0.3pt\mathcal{S}}}
\def\cC{{\mathcal{C}}}

\def\cS{{\mathcal{S}}}
\def\cD{{\mathcal{D}}}
\def\tk{{\tt k}}

\def\tC{{\tt C}}
  
\def\tB{{\tt B}}
\def\tD{{\tt D}}
\def\tR{{\tt R}}

\def\cA{{\mathcal{A}}}
\def\cU{{\mathcal{U}}}

\def\cB{{\mathcal{B}}}
\def\tb{{\tt b}}

\newcommand{\Span}{\mathrm{span}}
\newcommand{\scala}[2]{{#1} \cdot {#2} }
\newcommand{\Momega}{{M_{a}}}
\def\mom{{\tt M}}
\def\chir{{\chi^{(\tR)}}}
\def\da{{\frak{a}}}
\DeclareMathOperator\Rep{Rep}
\DeclareMathOperator{\OPS}{OPS}
\newcommand{\res}{\mathrm{res}}
\newcommand{\nr}{\mathrm{nr}}
\newcommand{\Sm}{\mathrm{S}}

\begin{document}

\title{{\bf  Growth of Sobolev norms in quasi-integrable quantum systems  \vspace{15pt}\\ Croissance des normes de Sobolev dans les syst\`{e}mes quantiques quasi-int\'{e}grables} }

\date{}

%\author{}

\author{ Dario Bambusi\footnote{Dipartimento di Matematica, Universit\`a degli Studi di Milano, Via Saldini 50, I-20133
Milano. 
 \textit{Email: } \texttt{dario.bambusi@unimi.it}}, Beatrice Langella\footnote{International School for Advanced Studies (SISSA), via Bonomea 265, I-34136 Trieste.
\textit{Email: } \texttt{beatrice.langella@sissa.it}}.
 }

\maketitle

\begin{abstract}
We prove an abstract result giving a $ \langle t\rangle^{\ep}$
upper bound on the growth of the Sobolev norms of a time-dependent
Schr\"odinger equation of the form $\im \dot \psi = H_0 \psi + V(t)
\psi $. {Here} $H_0$ is assumed to be the Hamiltonian of a steep quantum
integrable system and to be a {pseudodifferential} operator of order
$\td>1$; $V(t)$ is a time-dependent family of pseudodifferential
operators, unbounded, but of order $\tb<\td$. The abstract theorem is
then applied to {perturbations of the quantum anharmonic oscillators
  in dimension 2 and to perturbations of the Laplacian on a manifold
  with integrable geodesic flow, and in particular Zoll manifolds,
  rotation-invariant surfaces and Lie groups.} The proof is based on a
quantum version of the proof of the classical Nekhoroshev theorem.
 
 \begin{center}
 	%{\LARGE \textbf{Croissance des normes de Sobolev en syst\`{e}mes quantiques quasi-int\'{e}grables}}\\
 	%\vspace{12 pt}
 	\textbf{R\'{e}sum\'{e}}
 \end{center}

Nous prouvons un r\'{e}sultat abstrait donnant une majoration de la forme
$\langle t\rangle^{\ep}$ pour la croissance des normes de Sobolev
d'une \'{e}quation de Schrödinger d\'{e}pendant du temps de la forme $\im
\dot{\psi} = H_0 \psi + V(t) \psi$. On suppose que $H_0$ est
l'hamiltonien d'un syst\`{e}me quantique int\'{e}grable {escarp\'{e}} (steep) et qu'il est un op\'{e}rateur pseudo-diff\'{e}rentiel d'ordre $\td > 1$ ; $V(t)$ est une famille d\'{e}pendant du temps d'op\'{e}rateurs pseudo-diff\'{e}rentiels, non born\'{e}s, mais d'ordre $\tb < \td$. Le th\'{e}or\`{e}me abstrait est ensuite appliqu\'{e} aux perturbations des oscillateurs quantiques anharmoniques en dimension 2 et aux perturbations du laplacien sur une vari\'{e}t\'{e} avec flot g\'{e}od\'{e}sique int\'{e}grable, et en particulier sur les vari\'{e}t\'{e}s de Zoll, les surfaces invariantes par rotation et les groupes de Lie. La d\'{e}monstration repose sur une version quantique de la preuve du th\'{e}or\`{e}me de Nekhoroshev classique.
\end{abstract}
\noindent

{\em Keywords:} Schr\"odinger operator, normal form, Nekhoroshev
theorem, pseudo differential operators

\medskip

\noindent
{\em MSC 2010:} 37K10, 35Q55

%%%%%%%%%%%%%%%%%%%%%%%%%%%%%%%%%%%%%%%%%%%%%%%%%%%%%%%%%%%%%%%%%%%%%%%%%%

\setcounter{tocdepth}{1}
\tableofcontents
 
\section{Introduction}\label{intro}

In this paper we prove an abstract theorem giving a $\langle
t\rangle^{\ep}$ upper bound for the Sobolev norms {of the
  solutions} of an abstract
Schr\"odinger equation of the form
\begin{equation}
  \label{schro}
\im\dot \psi=(H_0+V(t))\psi\ ,\quad \psi\in\cH
\end{equation}
where $\cH$ is a Hilbert space, and $H_0$ is the Hamiltonian of a
quantum system which is {globally} integrable in a sense {defined}
below {and steep}; $H_0$
is also assumed to be a pseudodifferential operator of order $\td>1$,
and $V(t)$ is a smooth time-dependent family of self-adjoint
pseudodifferential operators of order $\tb<\td$. {In this sense $V$ can be considered a perturbation of $H_0$ and the system $H_0 + V(t)$ can be called \emph{quasi-integrable}.} 

{The novelty of this result is twofold: {it} extends the class of
  unperturbed systems {for which upper bounds of the Sobolev
    norms can be obtained, {and} it allows to treat the case of
    unbounded perturbations}. We emphasize that the class of unperturbed systems treated
  here strictly contains
  \emph{all} the systems of order $\td>1$ for which estimates on the
  growth of Sobolev norms have been obtained; it also contains new
   systems, like the quantum 2-d
  anharmonic oscillator and Lie groups.

\vskip 10 pt
We now describe more in detail our result. First, following
\cite{BGMR} (see also \cite{Fischer}) we
introduce some abstract algebras of linear operators in $\cH$ that enjoy
the 
properties typical of pseudodifferential operators. {We actually call
the elements of these algebras pseudodifferential operators.}

Then we give the definition of globally integrable quantum system. The
idea is to introduce some operators $A_j$, $j=1,...,d$ that are the
quantum analogue of the classical action variables and to consider
quantum systems with Hamiltonian $H_0=h_0(A_1,...,A_d)$, namely
Hamiltonians which are functions of the quantum action variables. In
turn, the quantum action variables are defined to be $d$ commuting
self-adjoint pseudodifferential operators of order 1, whose joint
  spectrum is contained in $\Z^d+\kappa$, with some $\kappa \in
  \R^d$.  The motivation of this definition rests in the classical
results by Duistermaat--Guillemin \cite{DG75}, Colin de Verdi\`{e}re
\cite{CdV2}, and Helffer--Robert \cite{HR82}, which ensure that, under
suitable assumptions, the quantization of a classical action variable
is a perturbation of a pseudodifferential operator with spectrum
contained in $\Z+\kappa$ with $\kappa\in\R$.  We point out that our
definition, which does not involve directly the action variables of
the classical system, is quite flexible, since it {applies also to
  systems whose classical action} variables are poorly known and to
quantization of some superintegrable systems \cite{Nek0,Fas} in which
the Hamiltonian is independent of some of the action
variables. {Finally, we assume that the function $h_0$ is homogeneous
  of degree $\td>1$ and fulfills the steepness assumption of the
  classical Nekhoroshev's theorem (see Definition \ref{steep.def}
  below).}
{Concerning the perturbation $V$, we assume that  it is a pseudodifferential operator of order $\tb < \td$. %The case $\tb=\td$ is outside of our framework, since in this case $V$ is not a perturbation of $H_0$.
In the case $\tb = \td$, $V$ cannot be considered as a perturbation of $H_0$. For this reason, we do not think it can be treated within our framework.}

We come now to the applications of the abstract result. The first application is to perturbations of a 2-dimensional
quantum anharmonic oscillator with Hamiltonian
\begin{equation}
  \label{qana}
H_0=-\frac{\Delta}{2}+\frac{\|x\|^{2\ell}}{2\ell}\ ,\quad x\in\R^2\ ,\quad \ell\in \N\ ,\quad
\ell\geq 2\ ,
\end{equation}
for which we prove the $\langle t\rangle^\ep$ upper bound on the
growth of Sobolev norms. We recall that for {anharmonic oscillators}
the situation was clear only for the 1-d case \cite{BGMR}, while for
the higher dimensional case only {an estimate by} $\langle t\rangle^s$
for the $\cH^s$ norm was known for the case of bounded
  perturbations (see \cite{MaRo}), {while, for unbounded
    perturbations, an upper bound of the form $\langle
    t\rangle^{s\alpha_s}$, with an exponent $\alpha_s>1$ which
    diverges as $\tb \rightarrow \td$, was proven in \cite{MaRo}.\footnote{{Actually the result of \cite{MaRo} applies to very general systems and does not rely on a pseudo-differential setting, but typically only allows to prove estimates with the exponent of $\langle t \rangle$ which depends on $s$}.}}. A
$\langle t\rangle^\ep$ estimate was out of reach {for the 2d case} with
previous methods. 
	{The second application is to Schr\"odinger equations on
          {compact} manifolds
  with  {globally} integrable geodesic flow,  {namely manifolds in
  which the Hamiltonian of the geodesic flow is integrable, and
  furthermore the action variables are globally defined and the Hamiltonian is steep.} In this paper, in order to be determined, we present some
  specific examples.} {First (i)} we recover the known
  results for tori \cite{Bourgain,delort,berti_maspero,BLMgrowth} and
  Zoll Manifolds \cite{BGMR}, then (ii) we consider rotation invariant
  surfaces (following \cite{CdV2,delort}) and construct the operators
  $A_j$ by quantizing the classical action variables. The novelty of
  {the} result we get {for rotation invariant surfaces} is that we can deal with \emph{unbounded}
  perturbations of the Laplacian. Another example (iii) is that of
  the Schr\"odinger equation on a Lie group ({see
  \cite{BerPro11,BerCorPro} for a KAM type result}): it is known that the geodesic flow on a
  compact Lie group is integrable (\cite{M82,Bolsinov}), but very
  little is known on the action angle variables. For this reason we
  work directly at a quantum level. To this end we use the
  intrinsic pseudodifferential calculus on Lie groups developed in 
  \cite{Fischer,ruzhansky_turunen} and construct directly the quantum
  actions in the case of compact, simply connected Lie groups. Here
  the lattice $\Z^d$ is essentially the lattice of the {dominant}
  weights of the irreducible representations of the Lie group. {The
  result controlling growth of Sobolev norms for solutions of the Schr\"odinger 
  equation on Lie groups is new.}

\vskip10pt

We come now to a description of the proof of the abstract result
{(namely Theorem \ref{main})}. The
present paper is a direct continuation of the works
\cite{BLMnr,BLMres,BLMgrowth,anarmonico}, and is based on the quantization of
the proof of the classical Nekhoroshev theorem. Here in particular we
develop an abstract version of the proof, which is based just on the use
of the lattice of the joint spectrum of the actions.

Precisely, the proof consists of two
steps: first one {uses the methods of normal form to construct
  iteratively} a family of unitary time-dependent operators
conjugating the original Hamiltonian \eqref{schro} to a Hamiltonian of
the form
\begin{equation}
  \label{nor.forma}
H_0+Z(t)+R(t)
\end{equation}
with $Z(t)$ which is a ``normal form'' operator (see Definition
\ref{def.nf} below) and $R(t)$ a smoothing
operator, which plays the role of a remainder. This was done in
\cite{BLMres,BLMgrowth} for the Schr\"odinger operator on $\T^d$ by quantizing the
classical normal form procedure. {We remark that the use of pseudo-differential calculus is what allows in \cite{BLMres, BLMgrowth} to deal with unbounded perturbations.} The second step of the
proof {in \cite{BLMres,BLMgrowth}} consists in {analyzing the
  structure of the normal form operator, namely in studying the way it couples
  different Fourier modes depending on the resonance relations
  fulfilled by the frequencies of the classical system. This is
  essentially a quantum version of the geometric construction of
Nekhoroshev theorem (see e.g. \cite{Nek1, Nek2,GCB,nek_noi} for the
classical construction). As a result we obtain that  $Z(t)$ has a block diagonal
structure with dyadic blocks}, so that for the
dynamics of $H_0+Z(t)$ the Sobolev norms remain bounded forever. The
addition of the remainder $R(t)$ is the responsible for the $\langle
t\rangle^{\ep}$ estimate on the growth.

To realize an abstract version of the above construction, one has to
develop several new tools. {A major difference with respect to the
  works \cite{BLMnr, BLMres, BLMgrowth, anarmonico} is that we use
  here directly pseudo-differential operators, never dealing with the corresponding symbols. This enables to work only with the quantum actions, instead of the classical ones.} {In turn, the quantum actions are used to define a Fourier expansion of pseudo-differential operators, in which the Fourier coefficients are labeled by the points of the lattice of their joint
spectrum. This is the main new technical {ingredient} of the present paper.}
%\red{Using this expansion leads us to define a particular class of pseudo-differential operators (see Definition \ref{def.norma}), inspired by Definition 5.13 of \cite{anarmonico}. %Also, most of the lemmas of Subsection 4.2 are small variants of the lemmas of Sections 5 and 6 of \cite{anarmonico}.}
	
{We point out that the geometric part} of
the proof is more complicated here than in \cite{BLMres,BLMgrowth}
because we deal here with the steep case, while in
\cite{BLMres,BLMgrowth} only the convex case was treated.

\vskip10pt

{We conclude this introduction by adding some further comments on
  the connection of the present result with previous ones.}
 Our main references is the paper \cite{Bourgain} (see
  also \cite{delort,berti_maspero}), in which Bourgain deals with the
  Schr\"odinger equation on the torus. Bourgain's approach {is based
      on a dyadic decomposition of $\Z^d$ which is almost invariant
      {for $H$} and only allows to deal with \emph{bounded} perturbations of
    $-\Delta$ \emph{on tori}.
    Bourgain's result was extended
    to the case of unbounded perturbations in \cite{BLMgrowth}, which
    in turn is the starting point of the present work. {The main new
    point of \cite{BLMgrowth} is that it
    gives a decomposition of $\Z^d$ analogous to Bourgain's one, but based on the resonance properties of the 
    frequencies of the underlying classical system. Such resonance properties admit a natural generalization to more general systems, and this is what is developed in
    the present paper.} %\green{We emphasize that this approach also allows to keep track of the pseudodifferential structure of the initial operators and thus to deal with unbounded operators.}

The other direct reference for our work is \cite{BGMR} (which in turn
is closely related to \cite{1,bam_17b,BM3,MaRo,BGMR1}). In \cite{BGMR}
essentially two cases were considered: systems with periodic classical
flows
(1-d systems and Zoll manifolds) and systems of degree $\td=1$ (half
wave equation in dimension 1 and harmonic oscillators). The main
novelty of the present approach is that we are able to deal with the
case of quantization of ``general'' classical integrable systems, in
which the flow is either periodic or quasiperiodic depending on the initial
datum. However we restrict here to the case $\td>1$.

Concerning the case $\td=1$, we remark that known upper bounds on the
growth of Sobolev norms just pertain the case of a {perturbation
  $V(t)$ which depends quasiperiodically} on time, with frequencies
which are nonresonant with the asymptotic of the gaps between
eigenvalues of the unperturbed operator $H_0$. Still for the case
$\td=1$ some interesting counterexamples, showing that growth of
Sobolev norms actually occurs, and furthermore in the resonant case is
a quite general phenomenon, have been obtained \cite{Del14, BGMR,
  Mas19,Mas2021, Mas2022,Thom}. We plan to investigate possible extensions of
our method to the case $\td=1$ in the future.

\vskip10pt
\noindent {\it Acknowledgments}. During the preliminary work which led
to this result we had many discussions on integrable systems and
Fourier integral operators, which were essential in order to
understand the theory of \cite{CdV2} and to arrive to our {definition}
of quantum integrable system: we warmly thank Francesco Fass\`o,
Didier Robert and San V\~{u} Ng\d{o}c. We also thank Bert Van Geemen
and Michela Procesi for some discussions and suggestions on Lie
groups, {Santiago Barbieri and Laurent Niederman for discussions about
  steepness, and Massimiliano Berti, Beno\^{i}t Gr\'{e}bert, Alberto
  Maspero and Riccardo Montalto for comments on a first version of
  this paper.}\\ We acknowledge the support of GNFM and of  
the research project PRIN 2020XBFL ``Hamiltonian and dispersive PDEs''.

\vskip 20 pt
\null

\part{Statements}

\section{Main results}
\subsection{Abstract pseudodifferential operators}\label{sec:alg}
Let $\cH$ be a Hilbert space and $K_0$ a self-adjoint positive operator
with compact inverse.

We define a scale of Hilbert spaces by $\cH^r:=\operatorname{dom}(K_0^r)$ (the domain
of the operator $K_0^r$) if $r\geq 0$, and $\cH^{r}:=(\cH^{-r})^\prime$
(the dual space {w.r.t. the scalar product of $\cH$}) if $ r<0$.  We denote by $\cH^{-\infty} =
\bigcup_{r\in\R}\cH^r$ and $\cH^{+\infty} = \bigcap_{r\in\R}\cH^r$.
We endow $\cH^r$ with the natural norm $\norm{\psi}_r:= \norm{(K_0)^r
	\psi}_{0}$, where $\norm{\cdot}_0$ is the norm of $\cH^0 \equiv
\cH$. From the spectral decomposition of $K_0$ it follows that for any
$m\in\R$, $\cH^{+\infty}$ is a dense linear subspace of $\cH^m$.
In the following, when we say that an operator is self-adjoint or unitary, we
always mean that it is self-adjoint or unitary with respect to the scalar
product in $\cH$. 

We denote by $\cB(\cH^{s_1};\cH^{s_2})$ the space of bounded linear
operators from $\cH^{s_1}$ to $\cH^{s_2}$. {As usual in the
  framework of pseudodifferential calculus, we will identify two 
  operators $A\in\cB(\cH^{s_1};\cH^{s_2})$ and
  $A'\in\cB(\cH^{s'_1};\cH^{s'_2})$ with $s_1'>s_1$ if
  $A\big|_{\cH^{s_1'}}=A'$. Correspondingly we will write
  $A\in\cB(\cH^{s_1};\cH^{s_2})\cap \cB(\cH^{s'_1};\cH^{s'_2})$}

We will also
consider the space $\bigcap_{s\in\R}\cB(\cH^s,\cH^{s-m})$ which is a
Fr\'echet space when endowed with the semi-norms $
\Vert \cdot \Vert_{\cB(\cH^s,\cH^{s-m})}$.

\begin{definition}
	\label{smoothing}
We will say that $F$ is of {\em order $m$} if
$F \in \bigcap_{s\in\R}\cB(\cH^s,\cH^{s-m})$. In the case of negative
$m=-N$, 
we will also say that $F$ is {\em $N$-smoothing}.
\end{definition}
\begin{definition}\label{def.smooth}
	If $\cF$ is a Fr\'echet (or a Banach) space, we denote
	by $C^k_b\left(\R^d; \cF\right)$ the space of the functions
	$F\in C^{k}\left(\R^d; \cF\right)$,  such that all the
	seminorms of $d^jF$ are bounded uniformly over $\R^d$ for all $j\leq k$. If this is true for all $k$ we write
	$F\in C^\infty_b\left(\R^d; \cF\right)$ .
\end{definition}

{Given $0 \leq \rho_0< 1$,} we introduce a family of 
algebras $\cA_\rho$ with $\rho\in(\rho_0,1]$ of operators encoding the
fundamental properties of pseudodifferential operators. 

For $m\in\R$ and $\rho\in({\rho_0},1]$ let $\rA{\rho}^m$ be a linear subspace of $
\bigcap_{s\in\R}\cB(\cH^s,\cH^{s-m})$ and define
$\cA_\rho:=\bigcup_{m\in\R}\rA{\rho}^m$.  

\vskip5pt
\noindent
\begin{assumption}{I}\label{assumption.I}
	\textbf{}
	\begin{itemize}
		\item[i.]	For each $m\in \R$, one has $K_0^m\in\rA{1}^m$.
		\item[ii.]
		For each $m\in\R$ and $\rho \in ({\rho_0}, 1]$,
		$\rA{\rho}^m$ is a Fr\'echet space for a family of semi-norms
		$\{ \wp^{m}_{\rho,j} \}_{j\geq 1}$  such that the embedding
		$\rA{\rho}^m\hookrightarrow  \bigcap_{s \in \R}
		\cB(\cH^s,\cH^{s-m})$ is continuous.
		\\
		If $m^\prime {<} m$ then $\rA{\rho}^{m^\prime}{\hookrightarrow }\rA{\rho}^m$ with a continuous embedding.
		\\
		If $\rho_1< \rho_2$, then
		$\rA{\rho_2}^m\hookrightarrow \rA{\rho_1}^m$ with
		continuous embedding.
		\item[iii.] $\forall m,n\in \R$: if $F\in \rA{\rho}^m$ and
		$G\in\rA{\rho}^n$ then $FG\in\rA{\rho}^{m+n}$ and the map
		$(F,G)\mapsto FG$ is continuous from
		$\rA{\rho}^{m}\times\rA{\rho}^{n}$ into $\rA{\rho}^{m+n}$. 
		\item[iv.]  If
		$F\in \rA{\rho}^m$ and $G\in\rA{\rho}^n$ then the commutator
		$[F,G]\in\rA{\rho}^{m+n-\rho}$ and the map $(F,G)\mapsto [F,G]$ is
		continuous from $\rA{\rho}^{m}\times\rA{\rho}^{n}$
		into
		$\rA{\rho}^{m+n-\rho}$.
		\item[v.] 	$\cA_\rho$ is closed under perturbations by smoothing
		operators: let $F:
		\cH^{+\infty}\rightarrow\cH^{-\infty}$ be a linear map. If there exists
		$m\in\R$ such that for every $N>0$ we have a decomposition $F=
		F^{(N)}+ S^{(N)}$, with $F^{(N)} \in\rA{\rho}^m$ and $S^{(N)}$
		is $N$-smoothing, then $F\in\rA{\rho}^m$.
		\item[vi.] 	If $F \in \rA{\rho}^m$, then also the adjoint operator
		$F^* \in \rA{\rho}^m$. 
		\item[vii.] 	For any $F\in\rA{1}^m$, any $G\in\rA{1}^1$, the map
		$\R\ni t\mapsto {\rm e}^{\im tG}\, F \, {\rm
			e}^{-\im tG}\in C^0_b(\R, \rA{1}^m)$.
	\end{itemize}
\end{assumption}

\begin{remark}
	\label{estendo}
	Property {I.iv} is the one which makes the algebras $\cA_\rho$
        different for different $\rho$.
        \\
          {Property I.vii is an abstract version of the Egorov theorem.} 
\end{remark}

\subsection{{Globally integrable quantum systems}}

The idea is to define a quantum integrable system as a system whose
Hamiltonian operator can be written as a function of some \emph{action
  operators}.  The action operators are $d$ self-adjoint pairwise
  commuting operators (in the sense that their projection-valued measures commute) $A_1,...,A_{d }$, %{of order 1 }
   fulfilling the
  following Assumption \ref{assumption.A}. 

\begin{definition}
	\label{horma} Let $m \in \R$ and let $0<\varsigma\leq 1$ be a parameter;
	a function $f\in C^\infty(\R^d)$ is said to be a symbol of class
	$S^m_\varsigma$ if $\forall \alpha \in \N^d$ one has
	\begin{equation}
	\label{hor}
{\sup_{  a \in \R^d } \frac{\left|\partial^\alpha f(a)\right|}{\langle a\rangle^{m-\varsigma|\alpha|}}	<\infty}
	\end{equation}
	where $\langle a\rangle:=\sqrt{1+\sum_ja_j^2}$.
\end{definition}}
{The quantities at l.h.s. of \eqref{hor}  form a family of seminorms for $S^m_{\varsigma}$.}
  
\vskip 5pt
\noindent
\begin{assumption}{A}\label{assumption.A}
	\textbf{}
	\begin{itemize}
		\item[i.] For $j=1,...,d$, the operators $A_j$ fulfill $A_j\in\rA1^1$.  
		\item[ii.] $\exists c_1>0$ s.t.\ $c_1 K_0^2<
		\uno+\sum_{j=0}^d A_j^2$.
		\item[iii.] There exist a convex closed cone $\cC\subseteq \R^{d }$
		and a vector
		$
		\kappa=(\kappa_1,...,\kappa_{d })\in\R^d
		$,
		such that the joint spectrum $\Lambda$ of the $A_j$'s
		fulfills
		\begin{equation}
		\label{I1}
		\Lambda\subset (\Z^{d}+\kappa)\cap \cC\ .
		\end{equation}
		\item[iv.]
		{
			There exist $\varsigma_0 \in [0, 1)$ and an increasing continuous function
			$(\varsigma_0, 1]\ni \varsigma\mapsto \rho(\varsigma)\in (\rho_0,1]$,
			with $\rho(1)=1$, 
			s.t., if $f\in S^m_\varsigma$, then
			$
			f(A_1,...,A_d)\in \rA{\rho(\varsigma)}^{m}
			$.
		}
		Furthermore its seminorms depend only on the seminorms of $f$
		and on the seminorms of the $A_j$'s. 
	\end{itemize}
\end{assumption}
\begin{remark}
		\label{2.1}
	{	By {A.ii}, the operator $\langle A \rangle^2:=\uno+\sum_{j=0}^d A_j^2$ has compact inverse,
	  therefore  {it has pure point spectrum and there exists a
          basis of $\cH$ formed by eigenfunctions of $\langle
          A\rangle^2$. Since the operators $A_j$ {pairwise commute 
          and commute} with $\langle A\rangle^2$, there exists also a basis
          $\left\{\psi_{L} \right\}$ of
          $\cH$ formed by eigenfunctions common to all these operators. }
}
%\label{joint}
 {Then the joint spectrum $\Lambda$ of the operators $A_j$ is
defined as the set of the ${a = (a_1, \dots, a_d)}\in\R^d$ s.t.\ there
exists $L$ with 
	\begin{equation}
	\label{lambda}
        A_j\psi_L=a_j\psi_L
       \ ,\quad \forall j=1,...,d\ .
	\end{equation}
}
\end{remark}

	We are now ready to give our definition of a globally integrable quantum system:
	\begin{definition}[Globally integrable quantum system] \label{def.globally.integrally.quantically}
		 We say that $H_0$ is a \emph{globally integrable quantum system} if there exists $h_0 \in C^\infty(\R^d;\R)$ such that
\begin{equation}\label{giq}
	H_0 = h_0(A_1, \dots, A_d)\,,
\end{equation}
where $A_1, \dots, A_d$ satisfy Assumption \ref{assumption.A} and the function
\eqref{giq} is spectrally defined. 
        \end{definition}

\subsection{The statement}

{To state our assumptions on the function $h_0$ defining $H_0$ we still
  need a couple of definitions.} First we recall that a function  $f\in C^{\infty}(\R^d\setminus\left\{0\right\})$ is
	said to be homogeneous of degree $m$ if  $	f(\lambda
        a)=\lambda^mf(a)$, $\forall \lambda>0$.

Homogeneous functions are typically singular at the origin, but, in
the context of pseudodifferential operators, the behavior of
functions in a neighborhood of the origin is not important. This is
captured by the next definition. 

        \begin{definition}
	A function $f\in C^{\infty}(\R^d)$ will be said to be homogeneous of
	degree $m$ at infinity if it fulfills $f(\lambda a)=\lambda ^m
        f(a)$, $\forall \lambda{>1}$ and $\forall a\in \R^d\setminus
        B_{1/4}$, where $B_r$ is the ball
	of radius $r$ centered at the origin.
        \end{definition}

     We recall, from \cite{GCB}, the definition of steepness:
\begin{definition}[Steepness]	\label{steep.def} 
	Let $\cU\subset \R^d$ be a bounded connected open set with nonempty
	interior.  A function $h_0 \in C^ 1 (\cU )$, is said to be steep in
	$\cU$ with {steepness radius $\tr$,} steepness indices $\alpha_1{\geq1},\dots
	,\alpha_{d-1}{\geq1}$ and (strictly positive) steepness coefficients $\tB_ 1
	, . . . , \tB_{d-1}$, if its gradient $\omega_i(a):=\frac{\partial
		h_0}{\partial a_i}(a)$ fulfills:
	$\displaystyle{\inf_{a\in\cU}\norma{\omega(a)}>0}$ and for any
	$a\in\cU$ and for any $s$ dimensional linear subspace $M\subset \R^d$
	orthogonal to $\omega(a)$, one has
	\begin{equation}
	\label{steep}
	\max_{0\leq\eta\leq\xi}\min_{u\in M:\norm u=1}\norma{\Pi_M
		\omega(a+\eta u)}\geq \tB_s\xi^{\alpha_s} \quad \forall \xi \in (0, \tr]\ ,
	\end{equation}
	where $\Pi_M$ is the orthogonal projector on $M$; the quantities $u$
	and $\eta$ are also subject to the limitation $a+\eta u\in\cU$. 
\end{definition}

\begin{remark}
  \label{steep.nider}
It is well known that steepness is generic. 
Examples of steep functions are given by functions which are convex or quasiconvex. In
the applications we will verify steepness by verifying an equivalent
condition due to Niederman \cite{Nied06} (see Theorem \ref{nie06} below).
\end{remark}

On $H_0$ we assume:
%\vspace{1.5em}
%
%\noindent

\begin{assumption}{H}\label{assumption.H}
	\textbf{}
	\begin{itemize}
		\item[i.] {$H_0$ is the Hamiltonian of a globally integrable quantum system and the function $h_0$ of \eqref{giq} is homogeneous of
			degree $\td>1$ at infinity.}
		\item[ii.] There exists an open set $\cU\subset\R^d$, s.t.\ $\cU \supset \overline{(B_{2}\setminus
			B_{1/2})\cap \cC}$, with the property that $h_0$
		{is} steep on $\cU$.
	\end{itemize}
\end{assumption}

\begin{theorem}
	\label{main}
	Let $H=H(t)$ be of the form
	\begin{equation}
	\label{H.mai}
	H(t):=H_0+V(t)
	\end{equation}
	with {$H_0$ the Hamiltonian of a globally integrable quantum
          system}. Assume that Assumption \ref{assumption.H} holds and that $V(\cdot) \in
	C^\infty_b\left(\R; \rA{1}^\tb \right)$ is a family of self-adjoint
	operators. Assume $\tb< \td$; {then for any $s\geq 0$ and for any initial datum $\psi \in \cH^s$ there exists a unique global solution $\psi(t) := \cU(t, \tau)\psi \in \cH^s$ of the initial value problem
		\begin{equation}
		\label{p.abs}
		\im \partial_t \psi(t) = H(t) \psi(t)\, ,  \quad \psi(\tau) = \psi\,,
		\end{equation}
	}
	furthermore, for any $s>0$ and $\varep>0$ there exists a positive constant $K_{s, \varep}$ such that for any $\psi \in \cH^s$
	\begin{equation}\label{growth.eq}
	\|\cU(t, \tau) \psi\|_s \leq K_{s, \varep} \langle t -\tau
        \rangle^{\varep}\| \psi\|_s\ , \quad \forall t, \tau \in \R\,.
	\end{equation}
\end{theorem}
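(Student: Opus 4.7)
The plan is to quantize the proof of the classical Nekhoroshev theorem, following the strategy used in \cite{BLMres,BLMgrowth} and abstracting it to the present algebraic setting. In two stages, I would construct, for any prescribed smoothing order $N$, a family $U(t)$ of time-dependent unitary operators such that the conjugated Hamiltonian takes the form
\[
U(t)^* H(t)\, U(t) - \im\, U(t)^* \dot U(t) = H_0 + Z(t) + R(t),
\]
with $Z(t) \in C^\infty_b(\R; \rA{1}^{\tb})$ in ``normal form'' with respect to the joint spectrum $\Lambda$ of $A_1,\dots,A_d$ and $R(t) \in C^\infty_b(\R; \rA{1}^{-N})$. The steepness of $h_0$ is then used to give $Z(t)$ a block-diagonal structure on a dyadic decomposition of action space, so that the flow of $H_0 + Z(t)$ alone preserves all $\cH^s$-norms; the $\langle t\rangle^\varepsilon$ growth comes purely from the remainder $R(t)$, via interpolation. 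Well-posedness of \eqref{p.abs} in each $\cH^s$ is standard and follows from self-adjointness of $H(t)$ together with $V(\cdot) \in C^\infty_b(\R; \rA{1}^{\tb})$ with $\tb < \td$.

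For the normal form step, I would first develop an abstract Fourier expansion on the lattice $\Z^d + \kappa$: using the joint eigenbasis of $A_1,\dots,A_d$ (which exists by Assumption A and Remark \ref{2.1}), any sufficiently regular operator $F$ decomposes as $F = \sum_{k\in\Z^d}\widehat F(k)$ with $[A_j,\widehat F(k)] = k_j \widehat F(k)$ for $j=1,\dots,d$. For each resonant module $\Omega \subset \Z^d$, the homological equation $[H_0, G] = V - \langle V\rangle_\Omega$ is then solved mode by mode by $\widehat G(k) = -\widehat V(k)/(k\cdot\omega(A))$ on $k \notin \Omega$, where $\omega := \nabla h_0$. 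Assumption A.iv turns $1/(k\cdot\omega(A))$ into a pseudodifferential operator on zones where the small divisor is controlled, and the commutator gain I.iv then forces $G \in \rA{\rho}^{\tb - \td + \rho}$. Conjugation by $e^{\im G}$, well-defined thanks to Assumption I.vii (the abstract Egorov theorem, Remark \ref{egor}), pushes the non-resonant part of $V$ to higher order; iterating and using Assumption I.v to reassemble the smoothing errors produces the decomposition above.

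The geometric step adapts Nekhoroshev's covering to the dyadic setting. For each dyadic scale $R = 2^n$, I would cover the annulus $\{R \leq |a| \leq 2R\}\cap\cC$ by a hierarchy of resonant blocks associated with affine sublattices $\Lambda_0 \subset \Z^d$; the steepness of $h_0$, in the form of Niederman's criterion (see Remark \ref{steep.nider}), guarantees that the resonant zones at different orders are separated by quantitative margins on the scale $R$. Lifting the covering to the quantum side through the joint spectral projectors $\Pi$ of $(A_1,\dots,A_d)$, one obtains that $Z(t)$ commutes with every such $\Pi$ up to a smoothing error, so along the flow of $H_0 + Z(t)$ the localized energies $\|\Pi\psi(t)\|_0^2$ are essentially conserved. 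Since the blocks are dyadic, this yields $\|\psi(t)\|_s \leq C_s \|\psi(\tau)\|_s$ uniformly in time, for every $s\geq 0$.

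The final step is an energy estimate in Sobolev norm. Setting $\varphi(t) := U(t)^*\psi(t)$, a direct computation combined with the block-diagonal bound of the previous step gives
\[
\frac{d}{dt}\|\varphi(t)\|_s^2 \leq C_s\, \|R(t)\varphi(t)\|_{s}\, \|\varphi(t)\|_s.
\]
Choosing $N$ so large that $R(t)$ gains any prescribed amount of regularity, a bootstrap between the trivial a priori bound $\|\varphi(t)\|_{s'} \lesssim \langle t\rangle^{s'}$ (obtained by iterating the commutator with the bounded part of $V$) and the improved gain from $R$ closes the desired $\langle t\rangle^\varepsilon$ estimate, as in \cite{BGMR,BLMgrowth}. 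The main obstacle will lie in the geometric Nekhoroshev step: here $h_0$ is only abstractly steep rather than convex or quasiconvex as in \cite{BLMgrowth}, and the action space is non-compact, so the classical Nekhoroshev covering must be reorganized dyadically and transported to the lattice $\Z^d + \kappa$ in a way that keeps the resulting resonant projectors compatible with the graded commutator structure of the algebras $\cA_\rho$ provided by Assumption I.
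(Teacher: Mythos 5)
Your proposal follows essentially the same route as the paper: a normal-form reduction via a lattice Fourier expansion on $\Lambda$ and a cutoffed cohomological equation (Section~\ref{NF.1}), a dyadic Nekhoroshev-type covering of $\Lambda$ made invariant under normal-form operators by exploiting steepness (Section~6, Theorems~\ref{brutto.ma.vero} and~\ref{teo.invarianza}), and finally Duhamel plus interpolation to pass from the uniformly bounded flow of $H_0+Z$ to the $\langle t\rangle^\varepsilon$ estimate for the full flow (Section~\ref{fine}). The minor discrepancies — using a differential inequality rather than the block-by-block conservation of Lemma~\ref{lemma.flusso.blocchi}, invoking Niederman's criterion in the abstract geometric step where the paper uses the steepness inequality \eqref{steep} directly, and an optimistic order count for $G$ that ignores the $\delta$-loss from the cutoffed denominators — do not alter the overall strategy.
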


\section{Applications}
\subsection{Application 1: the anharmonic oscillator in dimension 2}\label{anha}

We define $\cH:=L^2(\R^2)$ and, for $\ell\in \N$, $\ell\geq 2$, consider the Hamiltonian of  the quantum anharmonic oscillator
\begin{equation}
\label{anha.H0}
H_0:=-\frac{\Delta}{2}+\frac{\|x\|^{2\ell}}{2\ell}\ ,\quad x\in\R^2\ .
\end{equation}
In order to define the scale of Hilbert spaces $\cH^s$, we define
$
K_0:=(\uno +H_0)^{\frac{\ell+1}{2\ell}}
$, whose principal symbol is
\begin{equation}
  \label{k0.ana}
{\tt k}_0(x,\xi):=\Big(1+\frac{\|x\|^{2\ell}}{2\ell}+\frac{\|\xi\|^2}{2}\Big)^{\frac{\ell+1}{2\ell}}\ .
\end{equation}
For $\rho\in\left(\frac{\ell-1}{\ell+1},1\right]$, define
\begin{equation}
\label{le delte}
\delta_1:=\frac{1}{2}\left(\rho-\frac{\ell-1}{\ell+1}\right)\ ,\quad
\delta_2:=\frac{1}{2}\left(\rho+\frac{\ell-1}{\ell+1}\right)  \ .
\end{equation}

\begin{definition}
	\label{symbol.a1}
	Given $f \in C^\infty(\R^4)$, we will write $f \in S^{m}_{AN,\rho}$ if $\forall \alpha, \beta \in \N^2$, there exists $C_{\alpha, \beta} >0$
	s.t.
	\begin{equation}
	\label{es.7}
	\vert \partial_x^\alpha \, \partial_\xi^\beta f(
	x,\xi)\vert \leq C_{\alpha,\beta}
	\ \left(\tk_0(x,\xi)\right)^{m-\delta_1{|\alpha|}-\delta_2
		{|\beta}|} \quad \forall (x, \xi) \in \R^4  \ ,
	\end{equation}
	with $\delta_1, \delta_2$ given by \eqref{le delte}.
	We will say that an operator $F$ is a
	pseudodifferential operator of class $\cA^{m}_\rho$
	if there exists a symbol $f \in S^{m}_{AN,\rho}$
	s.t.\ $F$ is the Weyl
	quantization of $f$.
\end{definition}

\begin{remark}
	\label{hr}
	When $\rho=1$ the class of symbol $\cA^{m}_1$ reduces to the standard classes
	used to study the anharmonic oscillator (see
	e.g. \cite{HR82}). The case with $\rho<1$ was studied in \cite{anarmonico}.
\end{remark}

The properties I are immediate consequences of standard
pseudodifferential calculus in $\R^{4}$. {Following \cite{CdV2,
		charbonnel, anarmonico}, the operators $A_1, A_2$ will be constructed in
	Subsection \ref{oscilla} by quantizing the classical
        actions. {Assumptions \ref{assumption.A} and \ref{assumption.H}} will be verified in Subsection
        \ref{oscilla}, so that we have the following:

\begin{theorem}
	\label{anhar}
	Consider the Schr\"odinger equation \eqref{p.abs} with $H_0$ given by
	\eqref{anha.H0} and $V(.)\in C^\infty_b(\R;\cA^{\tb}_1)$, with
	$\tb<\frac{2\ell}{\ell+1}$, then the corresponding evolution
	operator fulfills \eqref{growth.eq}. 
\end{theorem}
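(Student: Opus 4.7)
The plan is to derive Theorem \ref{anhar} as a direct application of the abstract Theorem \ref{main}. To that end one must: (a) check that the algebras $\{\cA^m_\rho\}$ of Definition \ref{symbol.a1} satisfy Assumption I; (b) produce commuting, self-adjoint, order-one pseudodifferential operators $A_1, A_2$ on $L^2(\R^2)$ with joint spectrum in a set of the form $(\Z^2+\kappa)\cap\cC$, together with a function $h_0 \in C^\infty(\R^2;\R)$ such that $H_0 = h_0(A_1, A_2)$; (c) verify that $h_0$ is homogeneous of degree $\td = \tfrac{2\ell}{\ell+1}$ at infinity and steep on a convex neighborhood of the relevant set. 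Part (a) is, as stated in the text preceding the theorem, routine Weyl calculus in the anisotropic scale defined by $\tk_0$. Since $\tb<\tfrac{2\ell}{\ell+1}=\td$ and $\td>1$ for $\ell\geq 2$, the hypothesis $\tb<\td$ on the perturbation in Theorem \ref{main} is matched at once, and $V(\cdot)\in C^\infty_b(\R;\cA^\tb_1)=C^\infty_b(\R;\rA{1}^\tb)$ by definition.

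For (b) I would start classically. The Hamiltonian $h_0^{\rm cl}(x,\xi):=\tfrac{1}{2}\|\xi\|^2+\tfrac{1}{2\ell}\|x\|^{2\ell}$ is rotationally symmetric, so the angular momentum $L(x,\xi)=x_1\xi_2-x_2\xi_1$ Poisson-commutes with $h_0^{\rm cl}$ and provides integrability; classical action variables $a_1=L$ and $a_2=$ (radial period integral) can be constructed by standard Liouville--Arnold arguments away from the degenerate orbits. On the quantum side I would take $A_1$ to be the (integer-spectrum) angular momentum operator and build $A_2$ by the semiclassical Bohr--Sommerfeld construction of Colin de Verdi\`ere--Charbonnel, as adapted in \cite{anarmonico}: one looks for $A_2$ pseudodifferential of order one, commuting with $A_1$ up to smoothing terms, with principal symbol $a_2$, and then adjusts it modulo smoothing operators (using I.v to absorb the tail) so that the Bohr--Sommerfeld conditions place the joint spectrum inside a shifted integer lattice. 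Property A.iv, i.e.\ that $f(A_1,A_2)\in\cA^m_{\rho(\varsigma)}$ when $f\in S^m_\varsigma$, is the reason the two-parameter scale with $\rho\in(\tfrac{\ell-1}{\ell+1},1]$ is introduced in Definition \ref{symbol.a1}, and I would rely on the functional calculus already developed in \cite{anarmonico} for precisely this scale. The function $h_0$ is then recovered from the relation $h_0\circ(a_1,a_2)=h_0^{\rm cl}$ on the image of the action map, with the smoothing discrepancy absorbed into $\rA{1}^{-\infty}$.

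For (c), homogeneity of $h_0$ of degree $\tfrac{2\ell}{\ell+1}$ at infinity follows from the homogeneity of $h_0^{\rm cl}$ under the anisotropic scaling $(x,\xi)\mapsto(\lambda^{1/(2\ell)}x,\lambda^{1/2}\xi)$ and the induced scaling of the classical actions, which makes $(a_1,a_2)\mapsto(\lambda a_1,\lambda a_2)$. Steepness on a convex open neighborhood of $\overline{(B_{2}\setminus B_{1/2})\cap\cC}$ I would verify via Niederman's equivalent $R$-convexity criterion (see Remark \ref{steep.nider}), reducing steepness to a nondegeneracy property of $h_0$ that has in fact already been checked for the 2-D anharmonic oscillator in \cite{BamF, BFS18}. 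Once Assumption A and Assumption H are in place, and given that the perturbation is in $C^\infty_b(\R;\rA{1}^\tb)$ with $\tb<\td$, Theorem \ref{main} applies and yields \eqref{growth.eq}, which is exactly the conclusion of Theorem \ref{anhar}.

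The hardest step is unquestionably (b): although the classical system is explicitly integrable, the second action $a_2$ is defined only implicitly by an elliptic-type period integral depending on $\ell$, and the delicate task is not merely to quantize it but to produce a quantization whose joint spectrum lies \emph{exactly} on a shifted integer lattice rather than only approximately. This is the content of the Colin de Verdi\`ere--Charbonnel microlocal normal form, and its applicability is precisely what forces the restriction to dimension 2: in dimension 3 the topology of the Lagrangian foliation changes (cf.\ the discussion in the introduction) and the same construction is known to fail.
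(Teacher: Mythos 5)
Your overall strategy matches the paper's: reduce to Theorem \ref{main} by checking Assumption I (routine Weyl calculus in the anisotropic scale), constructing $A_1,A_2$ by the Colin de Verdi\`ere--Charbonnel microlocal Bohr--Sommerfeld quantization of the classical actions (this is exactly the content of Theorem \ref{3.1cdv}, drawn from \cite{CdV2}), invoking Charbonnel's functional calculus for A.iv, and verifying steepness through Niederman's criterion together with the nondegeneracy results of \cite{BamF,BFS18}. You also correctly identify that exactness of the shifted-lattice spectrum is what makes the construction subtle and what restricts to dimension two.

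However, there is one genuine gap in step (c). Assumption H.ii asks for a \emph{convex open set} $\cU\supset\overline{(B_{2}\setminus B_{1/2})\cap\cC}$ on which $h_0$ is steep, and that closure includes points of $\partial\cC$. But $h_0$, defined as the inverse of the action map $E\mapsto a_1(E,a_2)$, is a priori only an analytic function on the \emph{interior} of the cone $\cC$; the boundary $a_r=0$ corresponds to the degenerate leaves of the Liouville foliation (circular orbits), where the action map does not directly give a smooth Hamiltonian on an open set. Before one can apply Niederman's criterion one has to show that $h_0$ (or its regularization $h_0'$) extends to a real analytic function on an open cone $\cC_e$ strictly containing $\overline{\cC}\setminus\{0\}$. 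This is precisely the content of Theorem \ref{steep.an.proof}, and it is proved by passing to the reduced 1-d.o.f. Hamiltonian $h^*_L(p_r,r)$, observing that the circular orbit is an elliptic equilibrium, and using the \emph{convergent} Birkhoff normal form at an elliptic fixed point of a $1$-d.o.f. analytic system (with the explicit coefficient formula of \cite{FK04}) to produce smooth action variables on a full neighborhood of $a_r=0$. The compatibility of this local construction with the globally defined actions is then arranged via a unimodular change of actions, and the nondegeneracy condition of Corollary \ref{steep.fin} becomes an explicit inequality on the Taylor coefficients of the effective potential, which is where \cite{BamF,BFS18} are used. Your proposal skips from ``$h_0$ is homogeneous and we apply Niederman'' straight to steepness without noticing that the function is not yet defined on an open neighborhood of the cone; filling this in requires the Birkhoff normal form argument, which is a nontrivial part of the proof.

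A smaller slip: under the scaling $(x,\xi)\mapsto(\lambda^{1/(2\ell)}x,\lambda^{1/2}\xi)$ the angular momentum scales by $\lambda^{(\ell+1)/(2\ell)}$, not by $\lambda$; both actions in fact scale by $\lambda^{(\ell+1)/(2\ell)}$ while the Hamiltonian scales by $\lambda$, which does give degree $\td=\tfrac{2\ell}{\ell+1}$ for $h_0$ as a function of the actions, but the intermediate statement ``$(a_1,a_2)\mapsto(\lambda a_1,\lambda a_2)$'' is incorrect for that particular scaling.
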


\subsection{Application 2: manifolds with  {globally} integrable geodesic flow}\label{appli}

Let $(M,g)$ be a compact $n$-dimensional Riemannian manifold without
boundary; to fit our scheme we define $\cH:=L^2(M)$,
$K_0:=\sqrt{\uno-\Delta_g,}$ with $\Delta_g$ the negative Laplace-Beltrami operator relative to the metric $g$, so that $\cH^s$
coincides with the classical Sobolev space $H^s$.  {In this case, the
  pseudodifferential operators are the standard ones defined by
  H\"ormander. Precisely we give the following definition.}

\begin{definition}
	\label{psM}
	A function $f\in C^{\infty}(T^*M)$ is said to
	be a symbol of class $S^m_{H,{\varrho}}$, if, when written in any canonical
	coordinate system (in the sense of $T^*M$) it fulfills
	\begin{equation}
	\label{psM.1}
	\left|\partial^\alpha_x\partial^\beta_\xi f(x,\xi)\right|\leq
	C_{\alpha,\beta}\langle \xi\rangle^{m-{\varrho}|\beta|+(1-{\varrho})|\alpha|}\ ,\quad \forall
	\alpha,\beta\in\N^n\,, \quad \forall (x, \csi) \in T^*M\ .
	\end{equation}
\end{definition}

\begin{definition}
	\label{psM.10}
	We say that $F\in\cA^m_\rho$, if it is a pseudodifferential operator
	(in the sense of H\"ormander \cite{ho}) with Weyl
	symbol of class $S^m_{H, {\varrho}}$, {with $\displaystyle{\varrho = \dfrac{\rho + 1}{2}}$}. 
\end{definition}

Then Assumption \ref{assumption.I} holds. {In particular, the commutator between two pseudodifferential operators gains $\rho$ with respect to their product.} {Furthermore, Assumption \ref{assumption.A}.iv} with $\rho(\varsigma) =
	2 \varsigma - 1$ follows from functional calculus.
{We are now going to study some specific manifolds $M$. The applications are dealt with in
  different subsections, since the construction is different in each
  specific case.}
{The result will always be that the solution of the Schr\"odinger
  equation
\begin{equation}
\label{manifold}
\im\frac{\partial \psi}{\partial t}=(-\Delta_g+V(t))\psi\ ,\quad
\psi\in H^s(M)
\end{equation}
with $V(.)\in C^{\infty}_b(\R;\cA_1^\tb) $, $\tb<2$, fulfills
the estimate \eqref{growth.eq}.}

%We are now going study the Schr\"odinger equation
%\begin{equation}
%\label{manifold}
%\im\frac{\partial \psi}{\partial t}=(-\Delta_g+V(t))\psi\ ,\quad
%\psi\in H^s(M)
%\end{equation}
%with $V(.)\in C^{\infty}_b(\R;\cA_1^\tb) $, $\tb<2$, for the case of
%different specific manifolds $M$ and prove the
%estimate \eqref{growth.eq} for the corresponding flow.
%
%{The applications are dealt with in different subsections, since the construction of the quantum actions $A_1, \dots, A_d$ is made differently in each specific case.}

\subsubsection{Flat tori}\label{s.tori}

Let ${\bf e}_1, {\bf e}_2, \ldots, {\bf e}_n$ be a basis of $\R^n$
and let $\Gamma:\textrm{span}_{\Z}\left\{{\bf e}_1, {\bf e}_2, \ldots, {\bf
  e}_n \right\}$, be the maximal lattice that they generate. Define $
M\equiv \T_\Gamma := \R^n / \Gamma\,.$ By introducing in $\T_\Gamma$
the basis of the vectors ${\bf e}_i$, the Laplacian is transformed in
the operator $H_0:=\sum_{k, l}g^{kl}(-\im \partial_k)(-\im \partial_l)
$, with $g^{kl}$ the inverse matrix of $g_{jk}:={\bf e}_j\cdot{\bf
  e}_k$. In this case one has $A_j:=-\im \partial_j$, $j=1,...,n=
d$ and $h_0(\csi):=\sum_{kl}g^{kl}\xi_l\xi_k,$ which is convex and
thus steep. So Theorem \ref{main} applies and we get the estimate
\eqref{growth.eq}. This result was already obtained in
\cite{BLMgrowth}, which improved the results
\cite{Bourgain,delort, berti_maspero}.

\subsubsection{Zoll manifolds}\label{zoll}

We recall that a Zoll manifold is a compact manifold s.t.\ all its
geodesics are closed; the typical example of a Zoll manifold is a sphere. By Theorem 1 of \cite{cdv}, there exists
a pseudodifferential operator $Q$ of order  -1, commuting with $-\Delta_g$,
s.t.\ spec$(\sqrt{-\Delta_g}+Q)\subset \N+\kappa$, with $\kappa\geq
0$. We put $A:=\sqrt{-\Delta_g}+Q$ {and $h_0(a):= a^2$}. {We remark
that in this case one has $d=1$. We thus get that the solutions of \eqref{manifold} on a Zoll manifold
fulfill \eqref{growth.eq}.} We recall that this result was already
obtained in \cite{BGMR}.

\subsubsection{Rotation-invariant surfaces}\label{rotazioni}

%Let $g$ be a metric of revolution on $\S^2$, let $N$ and $S$ be the two fixed points of the isometric action of $SO(2)$, let $\sigma \in (0, L)$ be the curvilinear abscissa along a geodesic meridian joining $N$ and $S$, and let $\theta$ be the polar angle. Outside of the poles, the metric $g$ has the form $g=a(\sigma) d \theta^2 + d \sigma^2$ with $a: [0, L] \rightarrow \R^+$, $a(0)=a(L) = 0$, $a>0$ on $(0, L)$. Suppose that

Consider a real {\it analytic} function $f:\R^3\to\R$
invariant by rotations around the $z$ axis, and assume it is a
submersion at $f(x,y,z)=1$. Denote by $M$ the level surface
$f(x,y,z)=1$, {suppose that $M$ is diffeomorphic to $\mathbb{S}^2$} and endow it by the natural metric $g$ induced by the
euclidean metric of $\R^3$, then $M$ has integrable geodesic flow. 
{Following \cite{CdV2}, we introduce suitable coordinates in $M$ as
follows: let $N$ and $S$ be the
north and the south poles (intersection of $M$ with the $z$ axis) and denote by
$\theta\in[0,L]$ the curvilinear abscissa along the geodesic given by
the intersection of $M$ with the $xz$ plane; we orient it as going
from $N$ to $S$ and consider also the cylindrical coordinates
$(r,\phi,z)$ of $\R^3$: we will use the coordinates}
$
(\theta,\phi)\in (0,L)\times(0,2\pi)
$
as coordinates in $M$. {Using such coordinates, one can write the
equation of $M$ {by} expressing the cylindrical coordinates of a point in
$\R^3$ as a function of $(\theta,\phi)$ getting
$$
M=\left\{ (r(\theta),\phi,z(\theta))\ ,\quad (\theta,\phi)\in
(0,L)\times\T^1\right\} \,.
$$} Since $\theta$ is a geodesic parameter, the metric takes the form
$ g=r^2(\theta)d\phi^2+d\theta^2 $.
We assume that the function $r(\theta)$ has only one
critical point $\theta_0\in(0,L)$. Furthermore,
we need to ensure 
steepness. To this aim, consider the following
Taylor expansion at $\theta=\theta_0$: 
\begin{equation}
\label{taylor.ex}
\frac{1}{2r^2(\theta)}=\beta_0+\frac{1}{2}\beta_2(\theta-\theta_0)^2+\frac{1}{3!}\beta_3(\theta-\theta_0)^3+\frac{1}{4!}\beta_4(\theta-\theta_0)^4+O(|\theta-\theta_0|^5)\ .
\end{equation}

\begin{theorem}
	\label{rotation}
	Consider the Schr\"odinger equation \eqref{manifold} with $V(.)\in C^\infty_b(\R;\cA^{\tb}_1)$, with
	$\tb<2$. Assume also that $\beta_2\not=0$, and that 
	\begin{equation}
	\label{steep.riv.2}
	\beta_0\frac{-5\beta_3^2+3\beta_2\beta_4}{24\beta_2^2}-\beta_2\not=0 \,,
	\end{equation}
	then Assumption \ref{assumption.H} holds.
\end{theorem}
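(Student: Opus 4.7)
The plan is to apply the abstract Theorem \ref{main}, so I need to build the two quantum actions $A_1,A_2$, identify the function $h_0$ such that $-\Delta_g = h_0(A_1,A_2)$ modulo smoothing operators, and then verify that $h_0$ is steep on a suitable set $\cU$ containing (the relevant part of) the positive cone $\cC$. Assumption I and Assumption A.iv for the class $\cA^m_\rho$ of Definition \ref{psM.10} have already been discussed in Section \ref{appli}, since $A_1^2+A_2^2$ will have elliptic symbol of order $2$.

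First I would construct the actions. The geodesic flow on $M$ has two commuting classical integrals: the angular momentum $p_\phi$ and the full Hamiltonian $p_\theta^2+p_\phi^2/r(\theta)^2$. Since $r$ has a unique critical point $\theta_0$, one can compute explicitly the action-angle variables for the $\theta$-motion by the usual formula $I_2 = \frac{1}{2\pi}\oint p_\theta\,d\theta$, restricted to the energy surface at fixed $p_\phi$; this yields a classical Hamiltonian $h_0(I_1,I_2)$ homogeneous of degree $2$ in $(I_1,I_2)$. Following \cite{CdV2}, where the quantization of such integrable systems of surfaces of revolution was performed, one obtains commuting self-adjoint pseudodifferential operators $A_1$ (essentially $-\im\partial_\phi$) and $A_2$ of order $1$, with joint spectrum contained in $\Z^2+\kappa$ for a suitable $\kappa\in\R^2$, and such that $-\Delta_g = h_0(A_1,A_2)+R$, where $R$ is smoothing. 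Property I.v of the algebra $\cA_\rho$ lets us absorb $R$ into the perturbation $V(t)$, and Assumption A is satisfied.

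The second step is to verify Assumption H on $h_0$. Homogeneity of degree $\td=2$ at infinity is automatic from the construction. For steepness one takes an open convex $\cU$ containing $\overline{(B_2\setminus B_{1/2})\cap \cC}$ and applies Niederman's characterization of steepness (recalled in Theorem \ref{nie06}), which reduces the check to a finite-jet condition on $h_0$. Because we are in dimension $d=2$, the only nontrivial direction $M$ is the one orthogonal to $\omega(a)=\nabla h_0(a)$, and steepness along $M$ reduces to a one-dimensional vanishing-order condition. Using the explicit form of $h_0$ in terms of the Taylor expansion \eqref{taylor.ex} near the critical circle $\theta=\theta_0$, one computes the Hessian and the third/fourth derivatives of $h_0$ via the standard formulas of action-angle variables near an elliptic equilibrium; the hypothesis $\beta_2\neq 0$ guarantees that the elliptic normal form is well defined, and the hypothesis \eqref{steep.riv.2} is precisely what makes the relevant isoenergetic torsion (the 3-jet invariant appearing in Niederman's criterion) non-zero, so that $h_0$ is quasi-convex (hence steep) at every point of $\cU$.

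The main obstacle is the last step: translating the Taylor data $\beta_0,\beta_2,\beta_3,\beta_4$ on $M$ into the Hessian/higher derivatives of $h_0$ in the action variables, and matching the resulting nondegeneracy with the specific combination appearing in \eqref{steep.riv.2}. This computation — based on the Birkhoff normal form of the $\theta$-oscillator around $\theta_0$ and the inversion of the frequency map — is essentially a classical Hamiltonian calculation and relies on the tools of \cite{Nied06,BFS18,BamF}; once it is carried out, Theorem \ref{main} applies and yields the estimate \eqref{growth.eq} for the Schr\"odinger equation \eqref{manifold}.
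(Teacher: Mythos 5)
Your high-level plan matches the paper: construct the quantum actions via Colin de Verdi\`ere's quantization (Theorem \ref{6.1cdv}), write $-\Delta_g = h_0(A_1,A_2)$ modulo lower order, and verify Assumption H so that Theorem \ref{main} applies. But there are two genuine gaps in the way you propose to verify steepness, both of which the paper handles with additional machinery.

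First, you have the geometry of the nondegeneracy condition wrong. The Taylor data $\beta_0,\beta_2,\beta_3,\beta_4$ encode the Birkhoff normal form of the reduced $\theta$-oscillator around its elliptic equilibrium at $\theta_0$, and this equilibrium corresponds to $a_1 = 0$, i.e.\ to the \emph{boundary} of the cone $\cC$ (the circular geodesics). So before \eqref{steep.riv.2} can be interpreted as a nondegeneracy condition on $h_0$ at all, one must first extend $h_0$ analytically to an open cone $\cC_e \supset \overline{\cC}\setminus\{0\}$ containing a neighborhood of $\partial\cC$. This extension is not automatic; it is the content of the first half of Theorem \ref{steep.riv.proof} and rests on the convergence of the 1-d.o.f.\ Birkhoff normal form (and its analytic dependence on $L$). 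Your proposal never mentions this extension, yet without it the point where \eqref{steep.riv.2} is checked does not even lie in the domain where $h_0$ is defined.

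Second, you claim that \eqref{steep.riv.2} makes ``$h_0$ quasi-convex (hence steep) at every point of $\cU$.'' This overclaims. Via the explicit computation from \cite{FK04} (formula \eqref{contoFK}) and Corollary \ref{steep.fin}, hypothesis \eqref{steep.riv.2} is exactly the condition that the Arnold determinant $\cD(\bar a)\neq 0$ at a \emph{single} boundary point $\bar a = (0,\bar a_2)$; it gives no control on $\cD$ at interior points of $\cU$, where $\cD$ may well vanish. The paper bridges this via Theorem \ref{steep!}: since $h_0$ is homogeneous and real analytic on $\cC_e$, $\cD$ is homogeneous, so its zero set is a finite union of radial lines; from there, an argument based on analyticity shows that the restriction of $h_0$ to any affine line has only isolated critical points, which by Niederman's characterization (Theorem \ref{nie06}) yields steepness on compact subsets. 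This radial-lines-plus-analyticity argument is the key missing ingredient; without it, nondegeneracy at one boundary point does not propagate to steepness on $\cU$, and quasi-convexity on all of $\cU$ is simply not what the hypotheses give you.
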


This theorem will be proved in Subsection \ref{ruota}.
{In this example, the actions were actually constructed in \cite{CdV2} by quantizing the classical action variables.}
\begin{remark}
  \label{delort_ruota}
For the case of \emph{bounded potentials}, this theorem was proved
in \cite{delort} where the condition \eqref{steep.riv.2} was not
required.
\end{remark}

\subsubsection{Compact, simply connected Lie groups} \label{liegroups}

Let $M\equiv G$ be a simply connected compact Lie group endowed with the
bi-invariant metric $g$. To apply Theorem \ref{main} to equation
\eqref{manifold} on $G$ we use the intrinsic formulation of
pseudodifferential calculus on Lie groups, developed in
\cite{ruzhansky_turunen,Fischer}. In particular this will be needed to
construct the quantum actions $A_j$ and to verify their
properties. We
remark that Lie groups which are simply connected and
compact are given by  $\textrm{SU}(n)$ with
$n\geq2$, $\textrm{Sp}(n)$ with $n \geq
3$, $\textrm{Spin}(n)$ with $n \geq 7$, and  $G_2$, $F_4$, $E_6$, $E_7$, $E_8$ {and their direct products. Our result also extends to direct products of compact, simply connected Lie groups with tori of any dimension}.}
An extension to more general compact Lie groups and homogeneous spaces
can also be obtained; the details are left for a future work. 

The starting point of the construction is the fact that in the
intrinsic Fourier calculus in Lie groups,
the Fourier coefficients of a smooth function are labeled by the
irreducible unitary representations of the group and each Fourier
coefficient is a unitary operator in the representation
space. 

More precisely, denote by $\widehat{G}$ the set of unitary irreducible
representations of $G$ modulo unitary equivalence and by $\Rep(G)$ the
set of unitary representations of $G$ (still modulo equivalence). Given
$\xi\in \Rep(G)$, denote by $\cH_\xi$ the corresponding
representation space; then the Fourier coefficients of a function
$\psi:G\to\R $ are a sequence $\{\hat \psi_\xi\ |\ \xi\in\widehat G \}$
with $\hat\psi_\xi\in\cB(\cH_\xi)$.

There is a way of defining symbols
of pseudodifferential operators as maps $\sigma$,
\begin{equation}
\label{lie.symb}
G\times \Rep(G)\ni (x,\xi)\mapsto \sigma(x,\xi)\in\cB(\cH_\xi)\ , 
\end{equation}
with suitable properties {(see Definition \ref{simbolo.fischer} below for the
precise definition taken from \cite{Fischer})}. Actually a symbol is usually defined by its
action on $\widehat G$ and extended to $\Rep(G)$ by direct sum. 

To define the actions we need a further step in the theory of Lie
groups: to a representation $\xi\in\widehat G$, one associates its
{highest} weight $\tw_\xi$,  and it turns out (see
e.g. \cite{fulton_harris}) that there is a 1-1 correspondence
between the elements of $\widehat G$ and the elements of the cone
$\Lambda^+(G)$ of dominant weights, defined by
\begin{equation}\label{cono.pesi}
\Lambda^+(G) = \left\{ \tw \in \R^d\ \left|\ \tw = \tw^1 \tf_1 + \dots
+ \tw^d \tf_d\,, \quad \tw^j \in \N\ , \  \forall j= 1, \dots, d \right.\right\}\,,
\end{equation}
where $\tf_1, \dots, \tf_d \in \R^d$ are the \emph{fundamental weights} of
$G$. In the following we also denote $\underline \tf:=\sum_{j=1}^d
\tf_j\in \R^d$ and, given a
dominant weight $\tw$, we denote by $\tw^j$ its components on the basis
$\tf_j$, namely the numbers such that
$
\tw=\sum_{j=1}^d\tw^j \tf_j
$.
With this notation, the Laplacian $-\Delta_g$ acts in Fourier space
as follows:
\begin{equation}
\label{lie.lapa}
\widehat{(-\Delta_g \phi)}_\xi=
(\left\|\tw_\xi+ \underline \tf \right\|^2-\|\underline \tf\|^2)\hat \phi_\xi\ ,
\end{equation}
and its symbol is given by
$$
\sigma_{-\Delta_g}(\xi)=   (\left\|\tw_\xi+\underline
\tf\right\|^2-\|\underline \tf\|^2)\uno_{\cH_\xi} \ .
$$

We are now ready to define the quantum actions $A_1, \dots, A_d$ as
the operators acting in Fourier space as follows:
\begin{equation}
\label{lie.action}
\widehat{(A_j \phi)}_\xi:=
(\tw_\xi^j+1)\hat \phi_\xi\ ,
\end{equation}
whose symbol is given by
\begin{equation}\label{def.sigmap}
\sigma_{A_j}(\xi)= \left(\tw_\xi^j+1 \right)\uno_{\cH_\xi} \ .
\end{equation}
By direct computation one can see that
the operators $A_j$ commute, that 
their joint spectrum is
$
\Lambda=\N^d+\kappa\ ,\quad \kappa=(1,...,1)$,
and that
\begin{equation}\label{lie.lapla.azio}
-\Delta_g=\sum_{i, j = 1}^{d} A_i A_j\, \tf_i \cdot \tf_j - \|\underline
\tf\|^2\ , 
\end{equation}
so that we can define 
\begin{equation}
\label{lie.hzero}
h_0(A):= \sum_{i, j = 1}^{n} A_i A_j\, \tf_i \cdot \tf_j\ .
\end{equation}
 Note that $h_0$ is homogeneous of degree 2, convex and thus steep.
In Section \ref{sec.lie.proofs} we will prove that the $A_j$'s are
pseudodifferential operators, so that we can apply Theorem \ref{main}
and deduce that the estimate \eqref{growth.eq}
holds for the solutions of the equation \eqref{manifold} on a compact,
simply connected Lie group.
%\green{We also point out that it would be interesting to study growth of Sobolev norms on homogeneous spaces; we leave this for future work.{\tt  detto prima}}

\vskip10 pt

\part{Proofs}
%\noindent{{\bf Part II: Proofs.}}

\section{Analytic part}\label{NF.1}

We start by fixing some notations and definitions that will be used in
the rest of the paper,  {then we will state and prove the normal
  form Lemma.}

Given two real valued functions $f$ and $g$,
sometimes we will use the notation $f \lesssim g$ to mean that there
exists a constant $C>0$, independent of all the relevant quantities,
such that $f \leq C g$. If $f \lesssim g$ and $g \lesssim f$, we will write $f
\simeq g$.

{We recall that we denote
  $$
\omega(a):=\frac{\partial h_0}{\partial a}(a)\ ,
$$
which is homogeneous at infinity of degree}
\begin{equation}
\label{M}
{\mom:=\td-1}\, .
\end{equation}
Furthermore, given $\delta$ such that $\max\{0,\, {\varsigma_0 + }
\mom-1\}< \delta <\mom,$  {where $\varsigma_0$ is the quantity defined
  in Assumption \ref{assumption.A}.{iv}}, we set
\begin{equation}\label{varsigma}
\varsigma := 1 - (\mom -\delta)\,\ ,\quad
\quad \rho:=\rho(\varsigma)\,,
\end{equation}
where $\rho(\cdot)$ is the function defined in Assumption \ref{assumption.A}.{iv}.

	\begin{definition}
		\label{eigenfunctions}
		Given a joint eigenvalue $\ta = (\ta_1, \dots, \ta_d) \in\Lambda$ of the $A_j$'s, we consider the
		corresponding joint eigenspace, namely the space
		$\Sigma_{\ta}\subset \operatorname{dom}(K_0)$
		with the property that
		\begin{equation}
		\label{jointe}
		\psi\in\Sigma_{\ta} \quad \iff \quad A_j\psi=\ta_j\psi\ ,\quad\forall
		j=1,...,d\ .
		\end{equation}
		The orthogonal projector on $\Sigma_{\ta}$ will be denoted by $\Pi_{\ta}$.
	\end{definition}
	\begin{remark}
		\label{norme}
		Given $\psi\in \cH$, one can consider its spectral decomposition,
		namely
		\begin{equation}
		\label{deco}
		\psi=\sum_{\ta\in\Lambda}\Pi_{\ta}\psi\ ,
		\end{equation}
		then by Assumption \ref{assumption.A}.{ii} $\forall s \in \N$ there exist $c_{1,s}, c_{2,s} >0$ such that one has that
		\begin{equation}
		\label{N.2}
		c_{1,s}\left\|\psi\right\|_s^2\leq \sum_{\ta\in\Lambda}\langle
		\ta\rangle^{2s} \norma{\Pi_\ta\psi}^2_0\leq
		c_{2,s}\left\|\psi\right\|_s^2\,.
		\end{equation}
	\end{remark}

        Given {$\mu \in (0, 1)$ and $\tR>1$ (typically $\mu\ll 1$ and $\tR\gg 1$) },
we give the following definitions:
\begin{definition}
	\label{res}
	We say that a point $a\in \Lambda$ is resonant with
	$k\in\Z^d\setminus\{0\} $ if {$\|a\| \geq \tR$} and
	\begin{equation}
	\label{reso.1}
	|\omega(a)\cdot k| \leq \|a \|^\delta \|k\| \quad \text{and}\quad \|k\| \leq \|a \|^\mu\,.
	\end{equation}
	If $a \in \Lambda$ does not satisfy \eqref{reso.1}, we say that $a$ is nonresonant with $k$.
\end{definition}
\begin{definition}[Normal form]\label{def.nf}
	We say that an operator $Z \in \rA{\rho}^m$ is \emph{in normal
		form} if
	\begin{equation}
	\label{nor.for.def}
 {\exists \psi\in\cH\  s.t.\ }	\langle\Pi_a\psi;Z\Pi_b\psi\rangle\not=0
	\end{equation}
	implies that either $a$ is resonant with $b-a$, or $b$ is
	resonant with $b-a$.
\end{definition}

\begin{definition}
	\label{conju}
	We say that a family of unitary operators $U(t)$, conjugates $H$ to
	$H^+$, if, when $\psi(t)=U(t)\phi(t)$, one has
	\begin{equation}
	\label{conj.1}
	\im \dot \psi(t)=H(t)\psi(t)\quad \iff \quad \im\dot\phi(t)=H^+(t)\phi(t)\ .
	\end{equation}
\end{definition}

We are going to prove the following normal form theorem

\begin{theorem}[Normal form lemma]\label{norm.form}
	Let $H$ be as in equation \eqref{H.mai}, with $V \in
	\timereg\left(\R;\rA1 ^\tb\right)$, $\tb<\td$, and assume that
	$V(t)$ is a family of self-adjoint operators. There exists
        $0<\delta_*<\mom$ such that, if $\delta_*<\delta<\mom$, and 
	\begin{equation}\label{sigma.1}
	\da:=\min\left\{2\rho+\delta-\td;\rho+\delta-\tb;\delta
	\right\}>0\ ,
	\end{equation}  {then} for any $N \in
	\N$, for any $
	\mu, \tR$ {as above} there exists a time-dependent family of unitary
	maps $U_N(t)$ which conjugates $H$ to 
	\begin{equation}\label{eq.in.forma.1}
	H^{(N)}:=	H_0+Z_N(t) + R^{(N)}(t)\,,
	\end{equation}
	and the following properties hold
	\begin{enumerate}
		\item $Z_N \in \timereg\left(\R; \rA{\rho}^\tb\right)$ is a
		family of self-adjoint operators in normal form;
		\item $R^{(N)} \in \timereg\left(\R;\rA{\rho}^{\tb-\da
			N}\right)$, is a
		family of self-adjoint operators;
		\item For any $s \geq 0,$ $U_N,\ U_N^{-1} \in {L^\infty} \left( \R; \cB(\cH^s;\cH^s)\right)\,.$ 
	\end{enumerate}
\end{theorem}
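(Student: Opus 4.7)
The plan is to prove Theorem \ref{norm.form} by an iterative Lie-series scheme in the spirit of the classical Nekhoroshev normal form. I would build $U_N = e^{\im S_N}\cdots e^{\im S_1}$ as a composition of $N$ unitary transformations, each generated by a self-adjoint pseudodifferential operator $S_n$ chosen to kill the non-resonant part of the perturbation at the $n$-th step. The inductive invariant is that at step $n$ the Hamiltonian has the form $H^{(n)} = H_0 + Z_n + V_n$ with $Z_n \in C^\infty_b(\R;\rA{\rho}^{\tb})$ already in normal form (Definition \ref{def.nf}) and $V_n \in C^\infty_b(\R;\rA{\rho}^{\tb - n \da})$; the base case $n=0$ is $Z_0 = 0$, $V_0 = V$. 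The essential decomposition tool is the lattice Fourier expansion associated with the joint spectrum $\Lambda$: any sufficiently regular operator $W$ decomposes as
\begin{equation*}
W = \sum_{k \in \Z^d} W_k, \qquad W_k := \sum_{a \in \Lambda} \Pi_a W \Pi_{a+k}.
\end{equation*}
Splitting $V_n = V_n^{\text{nf}} + V_n^{\perp}$, where the normal-form part collects only the contributions with $a$ resonant with $k$ in the sense of Definition \ref{res}, the piece $V_n^{\text{nf}}$ automatically lies in the normal-form class of Definition \ref{def.nf} and is absorbed into $Z_{n+1}$.

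The key step is solving the homological equation. I would define $S_n$ spectrally, so that the identity $\im [H_0, S_n] = V_n^{\perp}$ holds exactly, by setting on the non-resonant set
\begin{equation*}
\Pi_a S_n \Pi_{a+k} := \frac{\im \, \Pi_a V_n^{\perp} \Pi_{a+k}}{h_0(a+k) - h_0(a)},
\end{equation*}
and zero elsewhere. The non-resonance bound $|\omega(a)\cdot k|\geq \|a\|^{\delta}\|k\|$, together with a Taylor expansion in $k$ (legitimate since $\|k\|\leq \|a\|^{\mu}$ with $\mu$ small), yields the small-divisor estimate $|h_0(a+k)-h_0(a)|^{-1}\lesssim \|a\|^{-\delta}\|k\|^{-1}$. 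To lift this into a statement of the form $S_n \in C^\infty_b(\R;\rA{\rho}^{\,\cdot})$, I would realize a smooth regularization of $(a,k)\mapsto (h_0(a+k)-h_0(a))^{-1}$ as a symbol of class $S^{-1}_{\varsigma}$ in the sense of Definition \ref{horma}, with precisely the $\varsigma$ given by \eqref{varsigma}, and then invoke Assumption A.iv; this is exactly why $\rho$ must be taken as $\rho(\varsigma)$ and why the whole scale of algebras $\rA{\rho}$ has to be introduced rather than working with a single algebra.

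Conjugating by $U^{(n)} = e^{\im S_n}$ and including the time-derivative contribution produces, via the Lie series,
\begin{equation*}
H^{(n+1)} = H_0 + Z_n + V_n^{\text{nf}} + R_n,
\end{equation*}
where $R_n$ collects $\im [Z_n + V_n^{\text{nf}}, S_n]$, the Taylor remainder of the Lie expansion involving iterated commutators of $S_n$ with $H^{(n)}$, and the term $-\dot S_n$ coming from the time derivative of $U^{(n)}$. The three quantities in the minimum \eqref{sigma.1} defining $\da$ correspond exactly to the worst orders arising here: $2\rho + \delta - \td$ from double commutators involving $H_0$, $\rho + \delta - \tb$ from single commutators with $Z_n$ or $V_n^{\text{nf}}$ (each losing $\rho$ by Assumption I.iv), and $\delta$ from $\dot S_n$, whose pseudodifferential order is the same as that of $S_n$ since $V$ is assumed $C^\infty_b$ in time. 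Positivity of $\da$, which forces $\delta$ to be chosen sufficiently close to $\mom$, guarantees a strict order gain at each step and closes the induction. Since each $S_n$ is self-adjoint and of negative order for $n \geq 1$, the unitaries $e^{\im S_n}$ preserve each $\cH^s$ uniformly in $t$ (using Assumption I.vii together with a Duhamel-type argument), and the composition yields property 3. The main obstacle, in my view, is precisely the pseudodifferential estimate on $S_n$: in this abstract setting there is no explicit symbol calculus available, so the division by the small divisor $h_0(a+k) - h_0(a)$ has to be realized at the operator level via Assumption A.iv, with seminorm bounds uniform in the discrete parameter $k \in \Z^d$ and compatible with the degraded regularity $\rho(\varsigma) < 1$ dictated by the small divisors.
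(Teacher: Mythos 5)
Your proposal reproduces the paper's architecture quite faithfully: iterative Lie transforms $e^{\im S_n}$, the block decomposition $W_k=\sum_a\Pi_a W\Pi_{a+k}$ (which coincides with the paper's $\hat W_k$ of Definition \ref{fou.1} by Lemma \ref{lego}), a resonant/nonresonant/smoothing split with cutoffs, and realization of the small-divisor division as functional calculus via Assumption A.iv. The genuine point of divergence is the choice of small divisor. You divide by the exact eigenvalue gap $h_0(a+k)-h_0(a)$, so the cohomological equation $-\im[H_0,S_n]=V_n^\perp$ holds identically; the paper instead divides by the \emph{linearized} quantity $\omega(a)\cdot k$ (via $d^T_k(A)$ in \eqref{cut-off-piccoli-divisori.4}) and then compares $[H_0,G]$ to $\sum_j\omega_j(A)[A_j,G]$ via Rasmussen's generalized commutator expansion (Theorem \ref{commutator}); the Taylor remainder of that expansion is precisely the source of the term $2\rho+\delta-\td$ in $\da$. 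Your approach can be made to work — provided $\mu$ is chosen small enough relative to $\varsigma$ so that the second-order Taylor term $O(\|a\|^{\mom-1}\|k\|^2)$ is dominated by $|\omega(a)\cdot k|\geq\|a\|^\delta\|k\|$ — and in fact it would \emph{remove} that loss, so your explanation that $2\rho+\delta-\td$ "comes from double commutators involving $H_0$" is not right: with the exact identity $\im[H_0,S_n]=V_n^\perp$ the double commutator $[[H_0,S_n],S_n]=[\im V_n^\perp,S_n]$ produces a gain $\rho+\delta-(\tb-n\da)$, not $2\rho+\delta-\td$.

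There is one genuine gap that would bite as written. You carry the induction in $\rA{\rho}^{\tb-n\da}$, but for $\rho<1$ the Fourier coefficients $\hat W_k$ of an element of $\rA{\rho}^m$ do \emph{not} decay in $k$ — this is exactly Remark \ref{iono}, which notes that $[W,A_j]\in\rA{\rho}^{m+1-\rho}\not\subset\rA{\rho}^m$ — so the series $\sum_k e_k(A)\hat W_k$ and the superposition of steps cannot be controlled in the $\rA{\rho}$ topology. You would have to work in the Fourier-weighted Fr\'echet classes $\rS{\rho}^m$ (Definition \ref{def.norma}), whose seminorms build in the decay $\langle k\rangle^{-N}$, and that substitution permeates the whole argument (Lemmas \ref{lem.prod.n}--\ref{lem.hom}). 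A smaller issue, but one that must be fixed for the unitarity of $e^{\im S_n}$ and the self-adjointness of $Z_{n+1}$, is that your resonant/nonresonant split is not symmetric: collecting only the blocks where $a$ is resonant with $k$ gives an operator in the normal-form class (which only requires either $a$ or $b$ resonant) but not a self-adjoint one. The paper handles this by symmetrizing, $F^{(res)}=\tfrac12\bigl(F^{(res)}_0+(F^{(res)}_0)^*\bigr)$, and you would need the analogous step.
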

The rest of this section is devoted to the proof of this
theorem. Actually this is the generalization to the abstract setting
of  theorems proven in \cite{BLMres, anarmonico} so
we only present in detail the points of the proofs different from those of
\cite{BLMres,anarmonico}.

The conjugating maps $U_N(t)$ that one looks for are compositions of
	maps of the form $e^{-\im G(t)}$, with $G(.)\in
	C^{\infty}_b(\R;\rA{\rho} ^\eta)$ a family of
	self-adjoint pseudodifferential operators with $\eta<\rho$ {and for each fixed $t$, $e^{-\im G(t)}$ is the complex exponential of the operator $-\im G(t)$, as defined through functional calculus for self-adjoint operators.}
    The detailed study of the
	properties of $e^{-\im G(t)}$ {was done in a
	context very similar to the present one in \cite{BGMR}, to which we refer for more details}.
% \green{In this section we just
%	state the result we need for the proof of Theorem
%	\ref{norm.form} and we refer to \cite{BGMR} for more details.}

\begin{lemma}
	\label{hpiu}
	Let $G\in\cinfinito\eta$, $\eta<\rho$ and $H\in\cinfinito m$, be
	families of self-adjoint operators; then $e^{-\im G(t)}$ conjugates $H$
	to $H^+$ given by
	\begin{equation}
	\label{Hpiu1}
	H^+=H-{\im
          [H,G]+\frac{1}{2}[[H;G];G]+\cinfinito{m+3(\eta-\rho)}}+ 
	\cinfinito{\eta}\ .
	\end{equation}
\end{lemma}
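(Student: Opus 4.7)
The plan is to derive $H^+$ from the conjugation identity and then expand by Duhamel / Lie series. Setting $\psi(t)=U(t)\phi(t)$ with $U(t):=e^{-\im G(t)}$ and substituting into $\im\dot\psi=H\psi$ gives $\im\dot\phi = H^+ \phi$ with
\begin{equation*}
H^+ = U^{-1} H U - \im\, U^{-1} \dot U.
\end{equation*}
Thus I would treat the two pieces separately.

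For the Heisenberg piece, I would use the identity $\frac{d}{ds}(e^{\im s G} H e^{-\im s G}) = \im\, e^{\im s G}[G,H] e^{-\im s G}$ and iterate it once, obtaining
\begin{equation*}
U^{-1} H U = H - \im[H,G] + \im^2 \int_0^1\!\int_0^s e^{\im s' G}\,[G,[G,H]]\, e^{-\im s' G}\, ds'\, ds.
\end{equation*}
By Assumption I.iv applied twice, $[G,[G,H]] \in \rA{\rho}^{m+2(\eta-\rho)}$. Provided conjugation by $e^{\im s' G}$ preserves this class uniformly in $s'\in[0,1]$ (an abstract Egorov statement, discussed below), the double integral lies in $\cinfinito{m+2(\eta-\rho)}$.

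For the time-derivative piece I would use Duhamel's formula for the non-commuting exponential,
\begin{equation*}
\frac{d}{dt} e^{-\im G(t)} = -\im \int_0^1 e^{-\im s G(t)}\, \dot G(t)\, e^{-\im(1-s)G(t)}\, ds,
\end{equation*}
which yields
\begin{equation*}
-\im\, U^{-1} \dot U = -\int_0^1 e^{\im(1-s)G(t)}\, \dot G(t)\, e^{-\im(1-s)G(t)}\, ds.
\end{equation*}
Since $\dot G \in \cinfinito{\eta}$ and (by the same Egorov-type statement) conjugation preserves this class, the integrand belongs to $\cinfinito{\eta}$ uniformly in $s$, and so does the integral. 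Combining the two contributions gives the claimed expression for $H^+$.

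The main obstacle is the abstract Egorov-type claim: for $G \in \rA{\rho}^{\eta}$ with $\eta<\rho$ and $F \in \rA{\rho}^{m}$, one has $e^{\im s G}\, F\, e^{-\im s G} \in \rA{\rho}^{m}$ uniformly in $s\in[0,1]$, together with the finite Lie expansion
\begin{equation*}
e^{\im s G}\, F\, e^{-\im s G} = \sum_{k=0}^{N-1} \frac{(\im s)^k}{k!}\, \mathrm{ad}_G^k(F) + R_N^{(s)},\qquad R_N^{(s)} \in \rA{\rho}^{m + N(\eta-\rho)}.
\end{equation*}
The crucial point is that $\mathrm{ad}_G$ lowers the order by $\rho-\eta>0$ (by I.iv), so iterated commutators produce arbitrarily smoothing operators; Assumption I.v (closure under smoothing perturbations) then identifies the series limit as a genuine element of $\rA{\rho}^{m}$, while I.vii furnishes the base case for continuous dependence on $s$. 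The details are essentially the argument of \cite{BGMR} transposed to the present $\rho<1$ setting, and they would be recorded in Appendix \ref{lie}.
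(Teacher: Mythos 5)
Your proof is correct and takes essentially the same route as the paper's Appendix~\ref{lie}: it derives $H^+ = U^{-1}HU - \im\,U^{-1}\dot U$, expands the Heisenberg piece by a one-step Lie/Taylor expansion with integral remainder, treats the $\dot U$ piece by Duhamel, and invokes an abstract Egorov statement to place both remainders in the right algebra classes — precisely the content of Lemmas~\ref{ego}, \ref{quantum.lie}, \ref{egorov} and Remark~\ref{sbaglio.3}. The one detail you defer (and which the paper establishes via the induction in Lemma~\ref{egorov}) is that the remainders depend on $t$ in $C^\infty_b$, not merely boundedly.
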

\begin{proof}
 {	
 By Lemma 3.1 of \cite{BGMR}, one obtains
 \begin{equation}\label{full.detail}
  H^+ = e^{\im G(t)} H e^{-\im G(t)} - \int_{0}^1 e^{\im s G(t)} \partial_t G e^{-\im s G(t)}\ d s\,.
 \end{equation}
 Then one applies Lemma 3.2 of \cite{BGMR} with $M=2$ to the first summand of \eqref{full.detail} and Lemma 3.2 of \cite{BGMR} with $M=1$ to the second summand of \eqref{full.detail}.
}
\end{proof}
We use this formula to compute the structure of the transformed
Hamiltonian. To this end remark that, since {$H_0\in\rA{1}^\td$ (by
	A.iv and H.i) and $V\in
	C^{\infty}_b(\R;\cA^\tb_1)$}, taking $G\in\cinfinito\eta$
%$\eta<\rho$,
{with $0<\rho-\eta< (\td - \tb)/2$ and $\eta < \tb$ (which will be our case),} one gets that {$H^+$ has the structure}
\begin{equation}
\label{Hpiu2}
H^+=H_0-\im [H_0,G]+V+\cinfinito{{\td+2(\eta-\rho)}}+
\cinfinito{\eta}\ ,
\end{equation}
{with $\max\{\td + 2 (\eta - \rho),\ \eta\} < \tb$.} Then the idea is to determine $G$ which solves the so called
co-homological equation, namely
\begin{equation}
\label{cohomo}
-\im [H_0,G]+V=Z+lower\ order\ terms
\end{equation}
with $Z$ in normal form and then to iterate the construction. The
solution of \eqref{cohomo} is the
main issue of this section and will be done by developing an abstract
version of the normal form theory of
\cite{BLMnr,BLMres,anarmonico}. This requires some work that
will be done in the next subsections.

\subsection{The Fourier expansion}\label{fourier}

{Here we extend the theory of Sect. 3.3 of \cite{BGMR} (see also
\cite{Bam96,nek_noi}) to the case where $H_0$ is a globally integrable quantum system. The idea is that the conjugation of operators with the unitary groups generated by the quantum actions defines a group action of the torus $\T^d$ on the space of pseudo-differential operators. Such a group action is used to define a Fourier expansion of pseudodifferential operators and to develop in a corresponding way normal form theory. A delicate point consists in solving the co-homological equation (Eq. \eqref{cohomo}), and this is done in Subsection \ref{cutoffs}. We start by giving the following definition.}

\begin{definition}
	\label{fou.1}
	Let $F\in \rA{\rho}^m$ with $\rho \in ({\rho_0}, 1]$, then, {for $k\in\Z^d$,} we define its $k$-th
	Fourier coefficient to be: 
%        \begin{align}
%                \label{flus}
%F(\vf):= e^{\im \vf\cdot A}Fe^{-\im
%	\vf\cdot A}\ .
%\\
%	\label{fou.2}
%	\hat
%	F_k:=\frac{1}{(2\pi)^d}\int_{\T^d} e^{\im \vf\cdot A}Fe^{-\im
%		\vf\cdot A}
%	e^{-\ii
%		k\cdot \vf}d\vf\ .
%        \end{align}
	\begin{equation}\label{fou.2}
	\hat
		F_k:=\frac{1}{(2\pi)^d}\int_{\T^d} e^{\im \vf\cdot A}Fe^{-\im
			\vf\cdot A}
		e^{-\ii
			k\cdot \vf}d\vf\,.
	\end{equation}
\end{definition}
In the following we will use the notation
\begin{equation}
  \label{flus}
F(\vf):= e^{\im \vf\cdot A}Fe^{-\im
	\vf\cdot A}\ .
\end{equation}

	\begin{remark}
		\label{smoo}
		By the formula 
		$$
		\frac{d}{d\vphi_j}\left({\rm e}^{\im \vphi \cdot A}\, F \, {\rm e}^{-\im \vphi\cdot
			A}\right)= {\rm e}^{\im \vphi\cdot A}\, \left(-\im[F;A_j]\right) \, {\rm e}^{-\im \vphi\cdot
			A}
		$$
		Assumption \ref{assumption.I}.vii implies that,  {for $F\in\rA{1}^m$,
                the right-hand side is in $C^0_b(	{\T^d}, \rA{1}^m)$, so that, by
                iteration,}                the map
		$\vphi\mapsto F(\vphi)={\rm e}^{\im \vphi \cdot A}\, F \, {\rm e}^{-\im
			\vphi\cdot A}\in C^\infty_b({\T^d}, \rA{1}^m)$.
	\end{remark}

        \begin{remark}
          \label{delA}
{If $F$ is selfadjoint then, $\forall k\in\Z^d$, one has $(\hat
F_{k})^*=\hat F_{-k}$.}
          \end{remark}

\begin{lemma}
	\label{foou.11}
	Let $F \in
	\rA{1}^m$, then for any $j$ and $N\in \N$ there exist $C>0$
	and $J$, independent of $F,$ such that
	\begin{equation}\label{easy}
	\wp^ m_{1,j}\left({\hat F_k}\right)\leq C \frac{\wp^ m_{1,J}\left({F}\right)}{\langle k\rangle^{N}} \quad \forall k \in \Z^d .
	\end{equation}
	It follows that the series
	\begin{equation}
	\label{f.some}
	F(\vf)=  \sum_{k\in\Z^d}\hat F_ke^{\ii k\cdot
		\vf}\ 
	\end{equation}
	is convergent.
\end{lemma}
The proof is an immediate consequence of Remark \ref{smoo}.

\begin{definition}\label{def.norma}
	For $m\in\R$, $\rho\in({\rho_0},1]$, the set of the operators $F \in
	\rA{\rho}^m$, 
	s.t.\ {$\forall j$, and $\forall N\in\N$} 
	\begin{gather}
	\label{norma.tutto}
	\norfou{m}j{F}:= \sum_{k \in \Z^d} \langle k
	\rangle^{N} \semi{m}{j}{\hat{F}_k}<\infty\,, 
	\end{gather}
	will be denoted by $\rS{\rho}^ m$. This is a Fr\'echet space with the
	family of seminorms \eqref{norma.tutto}. 
\end{definition}

\begin{remark}
	\label{embed}
	By Lemma \ref{foou.11} one has $\rA1^m\hookrightarrow\rS1^ m $ continuously. 
\end{remark}

By proceeding as in the proof of Lemma 5.16 of \cite{anarmonico} one gets
\begin{lemma}\label{lem.prod.n}
	Let $F \in \SF{m}$ and $G \in \SF{m^\prime}.$ Then $FG \in
	\SF{m+m^\prime}$, $[F;G]\in \SF{m+m'-\rho}$ and $\forall N \in \N$, $\forall j$, $\exists J,C$ s.t.\ one has
	\begin{align}
	\label{}
	\label{fou.prodotto}
	\norfs{FG}{m + m^\prime}{j} \leq C \norfs{F}{m}{J}
	\norfs{G}{m^\prime}{J }\,,
	\\
	\label{fou.pro.1}
	\norfs{[F;G]}{m + m^\prime-\rho}{j} \leq C \norfs{F}{m}{J}
	\norfs{G}{m^\prime}{J }\,.
	\end{align}
\end{lemma}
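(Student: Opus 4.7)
The plan is to follow the blueprint of an ordinary Fourier convolution estimate, rendered rigorous in the Fr\'echet setting of $\cA_\rho$. Two structural ingredients drive it. First, the map $F \mapsto F(\vf) := e^{\im \vf \cdot A}\, F\, e^{-\im \vf \cdot A}$ is an algebra automorphism on $\cA_\rho$, because $e^{\im \vf \cdot A}$ is unitary; hence $(FG)(\vf) = F(\vf)\, G(\vf)$ and $[F,G](\vf) = [F(\vf), G(\vf)]$. Second, the very definition of $\cS_\rho$ ensures that the series $F(\vf) = \sum_{k_1} \hat F_{k_1}\, e^{\im k_1 \cdot \vf}$ converges absolutely in every seminorm $\wp^{m}_{\rho,j}$, even after multiplication by an arbitrary polynomial weight $\langle k_1 \rangle^N$ (and likewise for $G$).

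The first step is to establish the Fourier convolution identities
\begin{equation*}
\widehat{(FG)}_k = \sum_{k_1+k_2=k} \hat F_{k_1}\, \hat G_{k_2}, \qquad \widehat{[F,G]}_k = \sum_{k_1+k_2=k} [\hat F_{k_1},\, \hat G_{k_2}].
\end{equation*}
Plugging the Fourier expansions of $F(\vf)$ and $G(\vf)$ into the automorphism identities and using Assumption I.iii to pull the product inside the absolutely convergent sums, uniqueness of Fourier coefficients on $\T^d$ yields both formulas. The interchange of multiplication and summation is legitimate thanks to the absolute convergence of the double series, which is in turn ensured by the bound obtained in the next step.

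The second step is a direct seminorm estimate. By \eqref{est.2}, for every $j$ there exists $J$ with
\begin{equation*}
\wp^{m+m'}_{\rho,j}\bigl(\hat F_{k_1}\, \hat G_{k_2}\bigr) \le C_1\, \wp^m_{\rho,J}(\hat F_{k_1})\, \wp^{m'}_{\rho,J}(\hat G_{k_2}).
\end{equation*}
Combining this with Peetre's inequality $\langle k_1 + k_2\rangle^N \le 2^{N/2}\,\langle k_1\rangle^N\, \langle k_2\rangle^N$, the sum $\norfs{FG}{m+m'}{j} = \sum_{k}\langle k\rangle^N\, \wp^{m+m'}_{\rho,j}(\widehat{(FG)}_k)$ is majorised by a product of two absolutely convergent sums over $k_1$ and $k_2$, which is exactly \eqref{fou.prodotto}. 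The commutator bound \eqref{fou.pro.1} follows verbatim, with Assumption I.iv (estimate \eqref{est.3}) in place of I.iii; the use of the commutator estimate is precisely what is responsible for the gain of $\rho$ in the order.

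I do not anticipate any serious obstacle: the only delicate point is the rigorous justification of the convolution formula, but this is made immediate by the absolute convergence estimate produced in the second step, so Fubini applies and every rearrangement is harmless. The entire argument is essentially bookkeeping built on Assumptions I.iii--iv together with Peetre's inequality.
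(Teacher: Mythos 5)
Your proof is correct and follows essentially the same route as the paper's: exploit the automorphism identity $(FG)(\vf)=F(\vf)G(\vf)$ to get the convolution structure of the Fourier coefficients, then estimate the double sum using the algebra bound \eqref{est.2} (respectively the Lie-algebra bound \eqref{est.3} for the commutator) together with the submultiplicativity $\langle k+k'\rangle^N \lesssim \langle k\rangle^N\langle k'\rangle^N$. The extra remarks you add on absolute convergence and Fubini are sound but are left implicit in the paper.
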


\begin{remark}
	\label{kj}
	By deriving $F(\vf)$ with respect to $\vf_j$ one gets
	\begin{equation*}
	\frac{\partial F}{\partial\vf_j}(\vf)=\sum_{k}\im k_j\hat F_k e^{\im
		k\cdot \vf}=-\im [F(\vf);A_j]\ , 
	\end{equation*}
	which implies
	\begin{equation}
	\label{der.1}
	\sum_{k}\im k_j\hat F_k =-\im [F;A_j]\ .
	\end{equation}
\end{remark}
That's why this Fourier expansion is useful for the solution of the
cohomological equation.

The main property relating the lattice
$\Lambda$ and the Fourier expansion is given by the following lemma
\begin{lemma}
	\label{lego}
	Let $F\in\rA{\rho}^m$ for some $m$. For any $a, b \in \Lambda$ and
	$k \in \Z^d$, if $\psi_a$ is an eigenfunction corresponding to
	$a$ and $\psi_b$ is an eigenfunction corresponding to $b$, one has
	\begin{equation}
	\label{lego.1}
	\langle\Pi_b\psi;\hat F_k\Pi_a\psi\rangle=\delta_k^{a-b}\langle\Pi_b\psi;F\Pi_a\psi\rangle
	\ .  \end{equation}
\end{lemma}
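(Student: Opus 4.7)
The plan is a direct computation starting from the definition \eqref{fou.2} of the Fourier coefficient $\hat F_k$. The key observation is that both the exponential kernel $e^{-\im k\cdot\vf}$ and the conjugating unitary $e^{\im \vf\cdot A}$ act by pure multiplication on the joint spectral data, so the matrix element $\langle\psi_b;\hat F_k\psi_a\rangle$ reduces to an elementary Fourier integral on $\T^d$ times the scalar $\langle\psi_b;F\psi_a\rangle$.

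More concretely, I would first move the integral over $\vf$ outside the inner product (legitimate since $\hat F_k$ is defined as a Bochner-style average of bounded operators and $\psi_a,\psi_b\in \cH^{+\infty}$), obtaining
\begin{equation*}
\langle\psi_b;\hat F_k\psi_a\rangle=\frac{1}{(2\pi)^d}\int_{\T^d}\langle\psi_b;e^{\im\vf\cdot A}Fe^{-\im\vf\cdot A}\psi_a\rangle\,e^{-\im k\cdot\vf}\,d\vf\ .
\end{equation*}
Next, using that the $A_j$'s are self-adjoint and pairwise commuting (Assumption A), functional calculus gives $e^{-\im\vf\cdot A}\psi_a=e^{-\im\vf\cdot a}\psi_a$; similarly, passing the adjoint unitary $(e^{\im\vf\cdot A})^*=e^{-\im\vf\cdot A}$ to the left entry of the inner product yields the factor $e^{\im\vf\cdot a}$ from $\psi_b$ (the exact sign of the exponent depends on the conjugate-linearity convention of $\langle\cdot;\cdot\rangle$). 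In either case one pulls out $\langle\psi_b;F\psi_a\rangle$ and is left with a scalar exponential $e^{\im\vf\cdot(a-b)}$ inside the integral.

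The last step is to evaluate
\begin{equation*}
\frac{1}{(2\pi)^d}\int_{\T^d}e^{\im\vf\cdot(a-b-k)}\,d\vf\ .
\end{equation*}
Here the crucial ingredient is Assumption A.iii: since $\Lambda\subset \Z^d+\kappa$, the difference $a-b$ of two joint eigenvalues lies in $\Z^d$, hence $a-b-k\in\Z^d$ and the integral collapses to the Kronecker delta $\delta_k^{a-b}$. Multiplying by $\langle\psi_b;F\psi_a\rangle$ gives \eqref{lego.1}.

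There is essentially no obstacle beyond bookkeeping; the only nontrivial conceptual point, which is really the reason why the Fourier expansion of Definition \ref{fou.1} is compatible with the joint spectrum of the actions, is that A.iii guarantees $a-b\in\Z^d$, so that the $\T^d$-periodic Fourier kernel sees a genuine integer frequency. Without this lattice hypothesis one would get a sinc-type expression rather than a clean Kronecker delta, and the whole normal form scheme of Definition \ref{def.nf} would lose its selection-rule character.
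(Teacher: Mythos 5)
Your computation is the same as the paper's: pull the integral outside the inner product, use functional calculus to replace $e^{\pm\im\vf\cdot A}$ acting on the joint eigenvectors by the scalars $e^{\pm\im\vf\cdot a}$, $e^{\pm\im\vf\cdot b}$, and evaluate the resulting $\T^d$-integral as a Kronecker delta. Your closing remark that Assumption A.iii (so that $a-b\in\Z^d$) is what makes the Fourier kernel see a genuine integer frequency is a correct and useful observation, even though the paper's proof leaves it implicit.
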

\begin{proof}
Just compute
\begin{align*}
\langle\Pi_b\psi;\hat
F_k\Pi_a\psi\rangle&=\frac{1}{(2\pi)^d}\int_{\T^d}\langle\Pi_b\psi; e^{\im
	\vf\cdot A}Fe^{-\im
	\vf\cdot A}\Pi_a\psi\rangle e^{-\im \vf\cdot k}d\vf
\\
&=\frac{1}{(2\pi)^d}\int_{\T^d}\langle e^{-\im
	\vf\cdot A}\Pi_b\psi;Fe^{-\im
	\vf\cdot A}\Pi_a\psi\rangle e^{-\im \vf\cdot k}d\vf
\\
&= \frac{1}{(2\pi)^d}\int_{\T^d}\langle e^{-\im
	\vf\cdot b}\Pi_b\psi;Fe^{-\im
	\vf\cdot a}\Pi_a\psi\rangle e^{-\im \vf\cdot k} d\vf
\\
&=\langle\Pi_b\psi;F\Pi_a\psi\rangle \frac{1}{(2\pi)^d}\int_{\T^d} e^{-\im \vf\cdot (k+a-b)}   d\vf\,.
\qedhere
\end{align*} \end{proof}

\subsection{Solution of the cohomological equation} \label{cutoffs} 

In this subsection we are going to prove the following lemma.

\begin{lemma}\label{lem.hom}
	There exists $\delta_*<\mom$ s.t.\ for all $\delta_*<\delta<\mom$, the
	following holds true: define $\varsigma:=\varsigma(\delta)$ and
	$\rho=\rho(\varsigma(\delta))$ according to \eqref{varsigma}, then
	$\forall F\in\rS{\rho}^m$ self-adjoint,
	there exist
	self-adjoint
	operators $G\in\rS\rho ^{m-\delta}$, $Z\in\rS\rho ^{m}$, with $Z$ in
	normal form, s.t.\ 
	\begin{equation}\label{solve.me}
	-\im[H_0; G] + F-Z \in\rS\rho ^{m-(2\rho+\delta-\td)}+\cA_\rho^{-\infty}\ ,
	\end{equation}
	furthermore, for $\delta_*<\delta<\mom$ one has $2\rho+\delta-\td>0$.
\end{lemma}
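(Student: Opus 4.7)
The proof strategy is to solve the cohomological equation Fourier coefficient by Fourier coefficient. Writing $F = \sum_{k\in\Z^d} \hat F_k$ as in Definition \ref{fou.1}, Lemma \ref{lego} ensures that each $\hat F_k$ maps $\Sigma_a$ into $\Sigma_{a-k}$, so that on any joint eigenvector $\psi_a\in\Sigma_a$ one has
\begin{equation*}
[H_0,\hat F_k]\psi_a = d_k(a)\,\hat F_k\psi_a, \qquad d_k(a):=h_0(a-k)-h_0(a).
\end{equation*}
The plan is therefore to split each $\hat F_k$ into three pieces: a strictly non-resonant part, which, divided by $d_k(a)$, yields the contribution to $G$; a resonant part, which stays in $Z$ as in Definition \ref{def.nf}; and a ``bad'' part (large $\|k\|$ or small $\|a\|$), which goes into the remainder.

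For this I would construct a smooth partition of unity $\chi^{(nr)}_k+\chi^{(res)}_k+\chi^{(rem)}_k\equiv 1$ on $\R^d$ (indexed by $k\neq 0$), with $\chi^{(nr)}_k$ supported in $\{|\omega(a)\cdot k|\gtrsim\|a\|^\delta\|k\|,\,\|k\|\leq\|a\|^\mu,\,\|a\|\geq\tR\}$, $\chi^{(res)}_k$ supported inside the resonant set of Definition \ref{res}, and $\chi^{(rem)}_k$ supported in the complement. A central verification is that each $\chi^{(\bullet)}_k$ lies in the H\"ormander class $S^0_\varsigma$ with seminorms growing at most polynomially in $\|k\|$, for $\varsigma = 2+\delta-\td = 1-(\mom-\delta)$ as in \eqref{varsigma}. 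This value is forced by the fact that $\omega$ is homogeneous at infinity of degree $\mom=\td-1$: each $a$-derivative of the defining ratio $|\omega(a)\cdot k|/(\|a\|^\delta\|k\|)$ produces a factor $\lesssim\|a\|^{\td-2-\delta} = \|a\|^{-\varsigma}$ in the transition region. By Assumption A.iv, $\chi^{(\bullet)}_k(A)\in\cA_\rho^0$ with $\rho=\rho(\varsigma)$.

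On $\mathrm{supp}\,\chi^{(nr)}_k$, the Taylor expansion $d_k(a)=-\omega(a)\cdot k+O(\|k\|^2\|a\|^{\td-2})$ is dominated by its leading term, provided $\mu<\varsigma$, which is arranged by taking $\mu$ small and $\delta$ close to $\mom$. Hence $M_k(a):=-\im\chi^{(nr)}_k(a)/d_k(a)\in S^{-\delta}_\varsigma$, so that $M_k(A)\in\cA_\rho^{-\delta}$. I would then set, after symmetrization to enforce self-adjointness using $\hat F_k^*=\hat F_{-k}$,
$$\hat G_k := \tfrac12\bigl(\hat F_k M_k(A)+(\hat F_k M_k(A))^*\bigr), \qquad \hat Z_k := \tfrac12\bigl(\hat F_k\chi^{(res)}_k(A)+(\hat F_k\chi^{(res)}_k(A))^*\bigr),$$
and $G:=\sum_k\hat G_k$, $Z:=\sum_k\hat Z_k$. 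The fast decay of $\wp^m_{\rho,j}(\hat F_k)$ in $\|k\|$ (inherited from $F\in\rS\rho^m$) together with polynomial growth of the seminorms of $M_k(A)$ and $\chi^{(\bullet)}_k(A)$ ensures $G\in\rS\rho^{m-\delta}$ and $Z\in\rS\rho^m$. Since $H_0$ commutes exactly with $M_k(A)$ and $\chi^{(\bullet)}_k(A)$ (all being functions of the commuting $A_j$), the identity $-\im[H_0,\hat F_k M_k(A)]\psi_a = \chi^{(nr)}_k(a)\hat F_k\psi_a$ holds, whence $-\im[H_0,G]+F-Z$ splits into: (i) the ``$\chi^{(rem)}_k$'' piece, which is $\cA_\rho^{-\infty}$ (rapid decay of $\hat F_k$ handles $\|k\|>\|a\|^\mu$; finitely many eigenspaces handle $\|a\|<\tR$); and (ii) a symmetrization error essentially of the form $\tfrac12\sum_k\hat F_k\bigl[\chi^{(nr)}_k(a)-\chi^{(nr)}_{-k}(a-k)\bigr]$, which by a commutator estimate from Assumption I.iv gains an extra $\rho$, landing in $\rS\rho^{m-(2\rho+\delta-\td)}$.

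The main technical obstacle will be the symbol-class bookkeeping: proving that the cutoffs really sit in $S^0_\varsigma$ with controlled polynomial-in-$\|k\|$ seminorms, and precisely tracking the symmetrization remainder through the algebra structure of Assumption I to the stated order. The existence of $\delta_*<\mom$ with $2\rho+\delta-\td>0$ follows by continuity: as $\delta\to\mom$ one has $\varsigma\to 1$ and hence $\rho(\varsigma)\to\rho(1)=1$, so $2\rho+\delta-\td\to 1>0$; thus positivity persists on a left-neighborhood of $\mom$, defining $\delta_*$.
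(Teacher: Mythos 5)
Your proposal is correct in outline and takes a genuinely different route from the paper. The paper builds $G_0$ from the cutoffed reciprocals $d_k^T(a)=(1-\chir(a))(1-\chi_k(a))/(\omega(a)\cdot k)$ of the \emph{linearized} differences $\omega(a)\cdot k$, and the identity $-\im[H_0,G_0]=F_0^{(nr)}$ then only holds up to an error controlled by the generalized commutator lemma (Corollary \ref{cor_commuti}), which replaces $[H_0,\cdot]$ by $\sum_j\omega_j(A)[A_j,\cdot]$ at the cost of a $\rS{\rho}^{m-\delta+\td-2\rho}$ remainder. You instead divide by the exact eigenvalue difference $h_0(a\pm k)-h_0(a)$, so that on eigenvectors the (unsymmetrized) cohomological equation holds \emph{exactly}, with no linearization error; the price is that you must verify the $S^{-\delta}_\varsigma$ estimate for $\chi^{(nr)}_k(a)/d_k(a)$ directly, which requires the Taylor comparison to $\omega(a)\cdot k$ and the extra condition $\mu<\varsigma$. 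Both routes work; the paper's symbol bookkeeping is slightly simpler (the divisor $1/(\omega\cdot k)$ is built from a genuine homogeneous function) and imposes no constraint on $\mu$, while yours avoids the commutator-lemma step entirely. In fact, if you carry the eigenvector computation through for the symmetrized $G=\tfrac12(G_0+G_0^*)$, you will find that the adjoint contributes exactly the partner multiplier $\chi^{(nr)}_{-k}(a+k)$, so there is no ``symmetrization error'' at all — the only leftover is the smoothing $\chi^{(rem)}$ piece, which sits in $\cA_\rho^{-\infty}$. Your item (ii) is thus a phantom, though harmless since it is claimed to land inside the allowed remainder class.

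Two small points to correct. First, Lemma \ref{lego} as printed has a sign typo (its own proof gives $\hat F_k:\Sigma_a\to\Sigma_{a+k}$, not $\Sigma_{a-k}$), so your divisor should be $h_0(a+k)-h_0(a)$ and $M_k$ needs the corresponding sign. Second, the constraint $\mu<\varsigma$ you need is genuinely absent from the paper's proof of this lemma (which is stated for any $\mu$), but it is harmless downstream because the geometric construction forces $\mu$ to satisfy \eqref{legami.ep}, which for $\delta$ close to $\mom$ makes $\mu$ far smaller than $\varsigma$. The identification $\varsigma=1-(\mom-\delta)$ from the homogeneity of $\omega$, and the continuity argument producing $\delta_*$ with $2\rho+\delta-\td>0$, match the paper exactly.
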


First, following \cite{anarmonico}, we split the perturbation $F$ in
a resonant, a nonresonant and a smoothing part. This will be done with
the help of suitable pseudodifferential cutoffs.

{Let $\chi\in C^{\infty}(\R,\R)$ be a symmetric cutoff function which
	{is equal to 1 in $[-\frac{1}{2},\frac{1}{2}]$ and has support in $[-1,1]$,}  and
	given $\tR>0$, define $$\chir(t) := \chi(\tR^{-1}
	\|t\|)\ ,\quad t\in\R^d\,,$$
	 {which is of class $C^\infty$ notwithstanding the singularity of $\|t\|$ at $t=0$, since $\chi$ is constantly equal to $1$ in a neighborhood on $0$.}
	\vskip5pt
	With its help we define,
	for $k\in\Z^d\setminus\left\{0\right\}$,
	\begin{align}
	\label{cut-off-piccoli-divisori.1}
	\tilde\chi_k( a) &:= \chi
	\left(\frac{\|k\|}{\|a\|^{\mu}}\right)\,\, ,\quad
	\chi_k( a) := \chi\left({\frac{\omega(a) \cdot
				k}{\| a\|^{ \delta}\|k\|}}  \right)\,,
	\\
	\label{cut-off-piccoli-divisori.4}
	d_k( a) &:= \frac{1}{\omega(a) \cdot
          k}\left(1-\chi\left({\frac{\omega(a) \cdot
          		k}{\| a\|^{ \delta}\|k\|}}\right)\right) \,.
	\end{align}
	We also put                
	\begin{gather} \label{media}
	\chi_0 (a) := 1\,, \quad  {\tilde{\chi}_0 (a):= 1}\,,
	\end{gather}
	$$
	\begin{gathered}
	\chi^T_k(a):=(1-\chir(a))\chi_k(a)\,,\quad
	d^T_k(a):=(1-\chir(a))d_k(a)\,.
	\end{gathered}
	$$
}
By the following lemma the above functions are symbols
\begin{lemma}\label{sono.horm}
	$\forall k \in \Z^d \setminus\{0\}$ one has
	$$
	\begin{gathered}
	\chi^T_k,\ (1 - \chir)(1 - \chi_k),\ \tilde\chi_k \in S_\varsigma^0\,, \\
	d_k^T \in S_\varsigma^{-\delta}\,,
	\end{gathered}
	$$ with seminorms uniformly bounded in
	$k$.
\end{lemma}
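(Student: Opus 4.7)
\noindent\emph{Proof plan.} My plan is to verify the required estimates directly from Definition~\ref{horma}, by applying the Leibniz rule and the Fa\`a di Bruno formula to the explicit expressions of the cutoffs. The key mechanism is that the homogeneity at infinity of $\omega$ of degree $\mom$, combined with the calibration \eqref{varsigma}, produces exactly the $\langle a\rangle^{-\varsigma|\alpha|}$ gain per derivative that is needed in $S^{m}_\varsigma$; the support restriction $\|a\|\gtrsim\tR$ given by the factor $(1-\chir)$ is what legitimizes the asymptotic estimates on $\omega$ throughout.

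For $\tilde\chi_k$ the analysis is straightforward. The case $k=0$ is immediate from \eqref{media}. For $k\neq 0$, the support of $\chi(\|k\|\|a\|^{-\mu})$ lies in $\|a\|\geq\|k\|^{1/\mu}\geq 1$, and on this region one has $|\partial^\alpha(\|k\|\|a\|^{-\mu})|\leq C_\alpha\|k\|\|a\|^{-\mu-|\alpha|}\leq C_\alpha\|a\|^{-|\alpha|}$; Fa\`a di Bruno together with $\varsigma\leq 1$ gives the $S^0_\varsigma$ bound uniformly in $k$. The cutoff $(1-\chir)$ belongs to $S^0_\varsigma$ trivially, since all its derivatives are compactly supported.

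The heart of the argument is the estimate for the resonant cutoff. Setting $g_k(a):=\omega(a)\cdot k/(\|a\|^\delta\|k\|)$, I first check, by Leibniz and the homogeneity of $\omega$, that
\[
|\partial^\alpha g_k(a)|\leq C_\alpha\,\|a\|^{\mom-\delta-|\alpha|}\qquad\text{for } \|a\|\geq\tR/2,
\]
uniformly in $k$, since only the direction $k/\|k\|$ enters. Fa\`a di Bruno on $\chi\circ g_k$ then bounds the contribution of a partition into $|\pi|$ blocks by $\|a\|^{|\pi|(\mom-\delta)-|\alpha|}$; as $0<\mom-\delta<1$ (by the admissible range $\max\{0,\mom-1\}<\delta<\mom$), the exponent is an increasing function of $|\pi|$ and is maximized at $|\pi|=|\alpha|$, where it equals $|\alpha|(\mom-\delta-1)=-|\alpha|\varsigma$ by \eqref{varsigma}. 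Multiplying by $(1-\chir)$ yields $\chi_k^T,\ (1-\chir)(1-\chi_k)\in S^0_\varsigma$ uniformly in $k$.

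For $d_k^T$ I will rewrite
\[
d_k^T(a)=\frac{(1-\chir(a))\,\psi(g_k(a))}{\|k\|\,\|a\|^\delta},\qquad \psi(s):=\frac{1-\chi(s)}{s},
\]
observing that $\psi\in C^\infty(\R)$ with globally bounded derivatives of every order (the would-be singularity at $s=0$ is killed since $1-\chi\equiv 0$ on $[-1/2,1/2]$). The same Fa\`a di Bruno argument applied to $\psi\circ g_k$ again produces the factor $\|a\|^{-|\alpha|\varsigma}$; combining this by Leibniz with the prefactor $(\|k\|\|a\|^\delta)^{-1}$, which itself belongs to $S^{-\delta}_\varsigma$ uniformly in $k\neq 0$, yields $d_k^T\in S^{-\delta}_\varsigma$ uniformly in $k$. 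I expect the main obstacle to be the bookkeeping of the multi-index partitions and of the $k$-independence of the constants at each step, rather than any conceptual issue beyond the calibration $\varsigma=1-(\mom-\delta)$.
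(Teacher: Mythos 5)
Your proposal is correct, and for $\tilde\chi_k$ and $\chi_k^T$ it follows essentially the same route as the paper: the same Fa\`a di Bruno scheme, the same homogeneity estimate $|\partial^\alpha g_k|\lesssim\|a\|^{\mom-\delta-|\alpha|}$ uniform in $k$ (only $k/\|k\|$ enters), and the same calibration $\varsigma=1-(\mom-\delta)$ to convert this into an $S^0_\varsigma$ bound. The paper applies the inequality $\|a\|^{\mom-\delta-|\gamma_i|}\lesssim\|a\|^{-\varsigma|\gamma_i|}$ factor by factor; you instead optimize over the number $|\pi|$ of blocks and observe that $|\pi|=|\alpha|$ is the extremal case — a slightly more structured way of landing on the same exponent $-\varsigma|\alpha|$.

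For $d_k^T$, however, you take a genuinely different and arguably cleaner route. The paper writes $d_k^T$ as $(1-\chir)(1-\chi_k)$ times $1/(\omega\cdot k)$ and then estimates derivatives of the latter directly on the support of $(1-\chi_k)$, which forces it to track separately the $\|k\|$-powers, the $\|a\|^{-\delta(j+1)}$ factors and the $\|a\|^{\mom-|\gamma_i|}$ factors before combining them. You instead absorb the division by $\omega\cdot k$ into the cutoff itself: writing $d_k^T(a)=(1-\chir(a))\,\psi(g_k(a))\,(\|k\|\,\|a\|^\delta)^{-1}$ with $\psi(s)=(1-\chi(s))/s$, which is globally smooth with bounded derivatives because $1-\chi$ vanishes near $0$ and $\psi(s)=1/s$ for $|s|\geq 1$. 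This reduces the $d_k^T$ case to exactly the same outer-composition estimate already proved for $\chi_k^T$, multiplied by the elementary weight $(1-\chir(a))\|a\|^{-\delta}\in S^{-\delta}_1$ and $\|k\|^{-1}\leq 1$. The bookkeeping of $k$-dependence becomes trivial. Both arguments are correct; yours has the advantage of unifying the treatment of $\chi_k^T$ and $d_k^T$ under a single mechanism.
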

We omit the proof, which is a  variant of the proofs of  Lemmas
	6.3, 6.4, 6.6 of \cite{anarmonico}. 

Given $F \in \SF m$ {self-adjoint}, we use the above functions
to decompose $F$:
\begin{align}
\label{split.1}
&	F^{(\res)}_0:= \sum_{k \in \Z^d }\chi_k^{T}(A) \tilde\chi_k(A)\hat{F}_k\ ,
\\
\label{split.2}
& F^{(\nr)}_0:= \sum_{k \in \Z^d \setminus \{ 0 \}}(1-\chir(A))(1-\chi_k(A)) \tilde\chi_k(A)
\hat{F}_k\ ,
\\
\label{split.3}
& F^{(\Sm)}_0 := \sum_{k \in \Z^d \setminus \{ 0
	\}}(1-\chir(A))(1-\tilde\chi_k(A))\hat{F}_k+\chir(A)
F\ ,
\end{align}
and
\begin{align}
\label{iPezzi}
F^{(\res)}:=\frac{F^{(\res)}_0+(F^{(\res)}_0)^* }{2}\ ,\quad
F^{(\nr)}:=\frac{F^{(\nr)}_0 +(F^{(\nr)}_0)^* }{2}\ ,
\\
F^{(\Sm)}:=\frac{F^{(\Sm)}_0+(F^{(\Sm)}_0)^* }{2}\ ,
\end{align}
so that each one of the operators is self-adjoint and one has
$
F:=({F+F^* })/{2}
$, and therefore
$F = F^{(\nr)} + F^{(\res)} + F^{(\Sm)}
$.

\begin{remark}\label{sono.simboli}
	Let $\varsigma$ and $\rho$ be defined as in \eqref{varsigma}.
	If $F \in \rS{\rho}^m$ for some $m \in \R$, then $F^{(\res)}, F^{(\nr)} \in \rS{\rho}^m$.
\end{remark}

\begin{remark}
	\label{split}
	By Lemma \ref{lem.prod.n} {and Remark \ref{delA}} one has that
	%our assumptions on the algebra properties, one has that, if $F\in
	%\rS\rho^ m$, then
	%\begin{equation}
	%  \label{split.2}
	%  %\langle F \rangle\in\rS{\rho}^m\ ,\quad 
	%F^{(res)}\in \rS{\rho}^ m \ ,\quad 
	%F^{(nr)}\in \rS{\rho}^ m \,,
	%\end{equation}
	%and we also have
	$F^{(\nr)}=F^{(\nr)}_0+\rS\rho ^{m-\rho}\ .$
\end{remark}
%\begin{remark}\label{come.nascono.le.rho}
%
%\end{remark}

Concerning $F^{(\Sm)}$, we have the following lemma.

\begin{lemma}
	\label{smoothing.l}
	Assume $F\in\rS\rho^m$, then $F^{(\Sm)}\in \rA\rho^{-\infty}$.
\end{lemma}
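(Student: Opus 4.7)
The goal is to show that each of the two pieces making up $F^{(S)}_0$ lies in $\cA_\rho^{-\infty}$; then Assumption I.vi will transfer this to the symmetrization $F^{(S)}$.

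For the term $\chir(A) F$: since $\chir$ is compactly supported in $\R^d$, it belongs to $S^{-M}_1$ for every $M \in \R$, so by Assumption A.iv (applied with $\varsigma = 1$, i.e.\ $\rho(1)=1$) one obtains $\chir(A) \in \cA_1^{-M}$ for every $M$, hence $\chir(A) \in \cA_\rho^{-\infty}$. The graded algebra property I.iii then yields $\chir(A) F \in \cA_\rho^{-\infty}$ regardless of the order of $F$.

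For the series $\sum_{k \neq 0}(1-\chir(A))(1-\tilde\chi_k(A))\hat F_k$, the geometric input is that $(1 - \tilde\chi_k)(a)$ is supported where $\|a\| \leq (2\|k\|)^{1/\mu}$, while Lemma \ref{lego} says that $\hat F_k$ maps $\Sigma_a$ into $\Sigma_{a-k}$. Combining these facts with Remark \ref{norme} and the continuous embedding $\cA_\rho^m \hookrightarrow \cB(\cH^s, \cH^{s-m})$ gives, for every $s, N \geq 0$ and some index $j = j(s, N, m)$,
\[
\|(1-\tilde\chi_k(A))\hat F_k \psi\|_{s+N} \lesssim \langle k\rangle^{(N+m)/\mu}\,\|\hat F_k\psi\|_{s-m} \lesssim \langle k\rangle^{(N+m)/\mu}\,\wp_{\rho,j}^m(\hat F_k)\,\|\psi\|_s.
\]
Since $F \in \cS_\rho^m$ the seminorms $\wp_{\rho,j}^m(\hat F_k)$ decay faster than any polynomial in $\langle k\rangle$, so the series converges absolutely in $\cB(\cH^s, \cH^{s+N})$ for every $s$ and $N$, and for every prescribed $N$ the tail $\sum_{\|k\| > K}$ is $N$-smoothing provided $K$ is large enough.

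To promote this operator-norm statement to membership in $\cA_\rho^{-\infty}$, I would invoke Assumption I.v: for each fixed $K \in \N$ the partial sum $\sum_{0 < \|k\| \leq K}(1-\chir(A))(1-\tilde\chi_k(A))\hat F_k$ is a finite linear combination of elements of $\cA_\rho^{-\infty}$, because for each fixed $k \neq 0$ the function $1 - \tilde\chi_k$ is compactly supported in $a$, whence $(1-\tilde\chi_k)(A) \in \cA_\rho^{-\infty}$ by A.iv and I.iii. Together with the $N$-smoothing tail, I.v produces the full sum in $\cA_\rho^m$ for every $m$, i.e.\ in $\cA_\rho^{-\infty}$. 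Combining with $\chir(A) F$ and invoking I.vi on the adjoint yields $F^{(S)} \in \cA_\rho^{-\infty}$. The main obstacle is the displayed estimate: the bare continuity $\cA_\rho^m \hookrightarrow \cB(\cH^s, \cH^{s-m})$ cannot by itself deliver any smoothing, and one genuinely needs both the cap $\|a\| \lesssim \langle k\rangle^{1/\mu}$ coming from the ultraviolet cutoff and the spectral selection rule of Lemma \ref{lego}, so that the polynomial growth in $\langle k\rangle$ from the Sobolev cost is absorbed by the $\cS_\rho$-decay of $\wp_{\rho,j}^m(\hat F_k)$.
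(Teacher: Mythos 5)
Your proposal is correct and follows essentially the same route as the paper: treat $\chir(A)F$ trivially, use the support constraint from the ultraviolet cutoff $1-\tilde\chi_k$ to trade Sobolev gain for powers of $\langle k\rangle$, absorb those powers via the rapid decay of $\wp^m_{\rho,j}(\hat F_k)$ coming from $F\in\cS_\rho^m$, and then invoke I.v and I.vi. The only cosmetic differences are that you estimate term-by-term in $k$ (reading the cutoff as bounding the output spectrum by $\langle k\rangle^{1/\mu}$) whereas the paper aggregates the double sum over $a,k$ and extracts $\|a\|^{-\mu N}$ from the constraint $\|k\|>\|a\|^\mu$, and your invocation of Lemma \ref{lego} is harmless but not actually needed, since the cutoff already constrains the output spectrum directly.
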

\proof First remark that the statement is obviously true for the 
second term of \eqref{split.3}.  Consider now the first term of
\eqref{split.3}. We are going to prove that $\forall s$ it is
smoothing of order $s-m$, from which the result
follows. First consider the case $m=0$; we prove that $F_0^{(\Sm)}$
maps $\cH^0$ to $\cH^{s}$ for all $s {\in \N.}$
Using the spectral decomposition of the operators
$A$, we have
\begin{align*}
F_0^{(\Sm)}\psi&=\sum_{a,k} {(1-\chir(A))(1-\tilde\chi_k(A))}\Pi_a\hat
F_k\psi\\
&=\sum_{a,k} {(1-\chir(a))(1-\tilde\chi_k(a))}\Pi_a\hat F_k\psi\ ,
\end{align*}
so that,  {using that $(1-\tilde\chi_k(A))$} is different from zero only if
$\|a\|^\mu<\|k\|$ {and applying Remark \ref{norme},} one has
\begin{align*}
\left\|F_0^{(\Sm)}\psi\right\|_s^2& {\leq c_1^{-s}}\sum_{a}\langle a\rangle^{2s}
\Big\|\sum_{k}  {(1-\chir(a))(1-\tilde\chi_k(a))}\Pi_a\hat F_k\psi\Big\|_0^2
\\
&\leq {c_1^{-s}} \sum_{a}\langle a\rangle^{2s}
\Big(\sum_{\|k\|>\|a\|^{\mu}} \Big\|\Pi_a\hat F_k\psi
\Big\|_0\Big)^2\\
&\leq {c_1^{-s}} \sum_{a}\langle a\rangle^{2s}
\Big(\sum_{\|k\|>\|a\|^{\mu}} \Big\|\hat F_k\Big\|_{\cB(\cH^0;\cH^0)}\left\|   \psi
\right\|_0\Big)^2\\
&\leq {c_1^{-s}} C\sum_{a}\langle a\rangle^{2s}
\Big(\sum_{\|k\|>\|a\|^{\mu}} \frac{ \langle
	k\rangle^N\wp^{0}_{\rho, j}(\hat F_k)} {\langle k\rangle^N}\|   \psi
\|_0\Big)^2
\\
&\leq {c_1^{-s}}        C\sum_{a}\langle a\rangle^{2s}\frac{1}{\|a\|^{2\mu N}}
\Big(\sum_{k} { \langle
	k\rangle^N\wp^{0}_{\rho, j}(\hat F_k)}\Big)^2 \left\|   \psi
\right\|_0 ^2\,.
\end{align*}
Taking $N>\frac{s+d}{\mu}$, one gets that the above quantity is
bounded by a constant times $\left(\wp^{0}_{\rho, j, N}(F)\right)^2 \left\|   \psi
\right\|_0 ^2$,
which proves the statement in the considered case. 
The general case
$m\not=0$ and $\psi\in {\cH^{s'}}$ is easily reduced to the previous one by
considering the operator $ {\langle A\rangle^{{-m +s'}}F\langle
A\rangle^{{-s'}}}$.
\qed

\begin{lemma}
	\label{nor.4}
	$F^{(\res)}$ is in normal form.
\end{lemma}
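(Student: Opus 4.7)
The plan is to check Definition \ref{def.nf} directly by computing the matrix elements of $F^{(res)}$ between joint eigenspaces $\Sigma_a$ and $\Sigma_b$ and observing that the pseudodifferential cutoffs \eqref{cut-off-piccoli-divisori.1}, \eqref{cut-off-piccoli-divisori.3} entering the definition of $F^{(res)}_0$ reproduce exactly the resonance condition of Definition \ref{res}. Since $F^{(res)} = \tfrac{1}{2}(F^{(res)}_0 + (F^{(res)}_0)^*)$, it suffices to treat $F^{(res)}_0$ and then add the contribution of its adjoint.

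Two inputs are needed. First, by Assumption A.iv combined with Lemma \ref{sono.horm}, the operators $\chi_k^T(A)$ and $\tilde\chi_k(A)$ are defined via functional calculus on the commuting family $A = (A_1,\dots,A_d)$; in particular they preserve each joint eigenspace $\Sigma_c$ and act on it by multiplication by $\chi_k^T(c)\tilde\chi_k(c)$. Second, by Lemma \ref{lego}, for any $\psi_a \in \Sigma_a$ and $\psi_b \in \Sigma_b$ one has $\langle \psi_a; \hat F_k\psi_b\rangle = \delta_k^{b-a}\langle \psi_a; F\psi_b\rangle$, i.e.\ $\hat F_k$ maps $\Sigma_b$ into $\Sigma_{b-k}$. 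Combining these facts,
\begin{equation*}
\langle \psi_a; F^{(res)}_0 \psi_b\rangle \;=\; \sum_{k\in\Z^d}\chi_k^T(a)\tilde\chi_k(a)\,\langle \psi_a; \hat F_k\psi_b\rangle \;=\; \chi_{b-a}^T(a)\,\tilde\chi_{b-a}(a)\,\langle \psi_a; F\psi_b\rangle,
\end{equation*}
as only the term $k=b-a$ survives. The degenerate case $k=0$ (i.e.\ $a=b$) causes no issue, since $\tilde\chi_0 \equiv 0$ by \eqref{media}.

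For this matrix element to be nonzero one needs $\chi_{b-a}^T(a)\tilde\chi_{b-a}(a)\neq 0$; unpacking \eqref{cut-off-piccoli-divisori.1} and \eqref{cut-off-piccoli-divisori.3}, this forces $\|a\|\geq \tR$ (up to a harmless factor $2$ coming from the shape of $\chi$, which is absorbed into the choice of $\tR$), $|\omega(a)\cdot(b-a)|\leq \|a\|^\delta\|b-a\|$, and $\|b-a\|\leq \|a\|^\mu$; by Definition \ref{res} this says exactly that $a$ is resonant with $b-a$. The identical computation applied to $\langle \psi_a;(F^{(res)}_0)^*\psi_b\rangle = \overline{\langle \psi_b; F^{(res)}_0\psi_a\rangle}$ shows that this quantity can be nonzero only when $\chi_{a-b}^T(b)\tilde\chi_{a-b}(b)\neq 0$; since $\chi_k^T$ and $\tilde\chi_k$ depend on $k$ only through $\|k\|$ and $|\omega\cdot k|$, this is equivalent to $b$ being resonant with $b-a$. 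Averaging the two contributions, any nonzero matrix element of $F^{(res)}$ between $\Sigma_a$ and $\Sigma_b$ forces either $a$ or $b$ to be resonant with $b-a$, which is exactly Definition \ref{def.nf}. There is no real obstacle in this argument; the only subtlety is the cosmetic bookkeeping around the support of $1-\chir$.
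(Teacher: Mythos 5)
Your proof is correct and follows the same route as the paper: both move the self-adjoint cutoff $\chi_k^T(A)\tilde\chi_k(A)$ onto the eigenvector of the commuting family $A$, invoke Lemma \ref{lego} to kill all Fourier modes except $k=\pm(b-a)$, and read off the resonance condition from the support of the cutoffs, treating $(F_0^{(res)})^*$ symmetrically. The only difference is cosmetic (you test $\langle\psi_a;\cdot\,\psi_b\rangle$ rather than $\langle\psi_b;\cdot\,\psi_a\rangle$ and make the evenness of $\chi_k^T,\tilde\chi_k$ in $k$ explicit, which the paper leaves implicit).
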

\proof One has 
\begin{equation*}
F^{(\res)}:= \frac{1}{2}\sum_{k \in \Z^d}\left[(1-\chir(A))
\chi_k(A) \tilde\chi_k(A)\hat{F}_{k}+\hat{F}_{-k}(1-\chir(A))
\chi_k(A) \tilde\chi_k(A)\right]\ .
\end{equation*}
Consider first the first term in the sum. By Lemma \ref{lego} and by
the definition of the cutoffs one has
\begin{align*}
\langle\psi_b;(1-\chir(A))
\chi_k(A) \tilde\chi_k(A)\hat{F}_{k}\psi_a\rangle= 
\langle(1-\chir(A))
\chi_k(A) \tilde\chi_k(A)\psi_b;\hat{F}_{k}\psi_a\rangle
\\
=(1-\chir(b))
\chi_k(b) \tilde\chi_k(b)\langle\psi_b;\hat{F}_{k}\psi_a\rangle
= (1-\chir(b))
\chi_k(b)
\tilde\chi_k(b)\delta^{a+k}_b\langle\psi_b;{F}\psi_a\rangle \ ,
\end{align*}
which means that this term does not vanish only when $b$ is resonant
with $k=b-a$, so this term is in normal form. An equal computation
shows that the second term is different from zero only if $a$ is resonant
with $k=b-a$, so that the lemma is proven. \qed

\noindent
{\it Proof of Lemma \ref{lem.hom}}. First, we remark that for
$\delta_*$ sufficiently close to $\mom=\td-1$ one has that
$\varsigma := 1 -(\mom-\delta)$ is close to $1$. Consequently,
since the function $\rho(\cdot)$ defined in Assumption \ref{assumption.A}.{iv} is
continuous, also $\rho(\varsigma)$ is close to $1$, and one
gets
$$
2\rho+\delta-\td = 2\rho + \varsigma - 2 > 0\,.
$$ Then we put $Z:=F^{(\res)}$ and we include
$F^{(\Sm)}$ in the remainder term $\cA_\rho^{-\infty}$. 
Then we define
\begin{align}
\label{G0}
G_0:=\sum_{k\not=0}{d^T_k(A)}\hat F_k\,, \quad G:=\frac{G_0+G_0^*}{2}=G_0+\rS\rho ^{m-\delta-\rho}\ ,
\end{align}
so that $G$ is self-adjoint.

Since by Lemma \ref{sono.horm} and Assumption \ref{assumption.A}.{iv}, ${d^T_k(A)} \in
\cA_\rho^{-\delta}$, one has $G_0 \in \cS_\rho^{m - \delta}$ and also 
$G \in \cS_\rho^{m - \delta}$
with
seminorms bounded by the seminorms of $F$.

We verify now that such a $G$ solves the cohomological equation. 
By the generalized commutator lemma
(Theorem \ref{commutator}) and Remark \ref{kj} one has
\begin{align*}
-\im [H_0;G]=-\im
[H_0;G_0]+\SF{m-\delta+\td-2\rho}=\sum_{j=1}^d-\im\frac{\partial
	h_0}{\partial a_j}(A)[A_j;G_0]+\rS\rho ^{m-\delta+\td-2\rho}
\\
= {\sum_{k \in \Z^d}}\sum_{j=1}^d\im \omega_j(A)k_j\hat G_{0,k}+\rS\rho ^{m-\delta+\td-2\rho}=\sum_{k\in\Z^d}^d(1-\chi^{(\tR)}(A))(1-\chi_k(A))\hat
 F_k+\rS\rho ^{{m-\delta+\td-2\rho}}
 \\
=F_0^{(\nr)}+\rS\rho ^{m-\delta+\td-2\rho} = F^{(\nr)}
+\rS\rho ^{{m-\delta+\td-2\rho}}\ , 
\end{align*}
since $F^{(\nr)}_0 - F^{(\nr)} \in \rS{\rho}^{m-\rho}$ and ${\rho> \rho+(\rho-1)+(\delta-M) = 2 \rho + \delta- \td.}$
\qed

\subsection{End of the proof of Theorem \ref{norm.form}}\label{finedim1}

We are now going to prove the following iterative lemma, which
immediately yields Theorem \ref{norm.form}

\begin{lemma}\label{norm.form.lemma}
	Let $H$ be as in equation \eqref{H.mai}, with $V \in
	\cinfinitos\rho \tb$, $\tb<\td$, a
	family of self-adjoint operators. 
	There exists $0<\delta_* <\mom$ such that, if $\delta_*<\delta<\mom$, then $\da$ defined as in \eqref{sigma.1} satisfies $\da>0$ and the following holds.
	%        define
	%        \begin{equation}
	%          \label{sigma}
	%\da:=\min\left\{\rho+\delta-\max\left\{\td-\rho;\tb\right\};\delta
	%\right\}\ .
	%        \end{equation}
	For any $\mu, \tR$ and $\forall n\geq 0$ there exists a time-dependent
	family $U_n(t)$ of unitary maps conjugating the operator $H$
	of \eqref{H.mai} to
	\begin{align}
	\label{eq.in.forma}
	H_n(t)=H_0+Z_n(t)+  R_n(t)+\widetilde R_n(t)\,,
	\end{align}
	where:
	\begin{enumerate}
		\item $Z_n \in \cinfinitos\rho \tb$ is a family of self-adjoint operators in normal form
		\item $R_n \in \cinfinitos\rho {\tb- n\da}$ and $R_n(t)$ is a
		family of self-adjoint operators
		\item $\widetilde R_n \in C^\infty(\R;\cA^{-\infty}_\rho)$ and
		$\widetilde R_n(t)$ is a
		family of self-adjoint operators
		\item For any $s \geq 0,$ $U_N,\ U_N^{-1} \in
		{L^\infty}(\R;\cB(\cH^s;\cH^s))\,.$ 
	\end{enumerate}
\end{lemma}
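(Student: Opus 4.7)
The plan is a straightforward induction on $n$, whose non-trivial content lies in orchestrating the cohomological equation (Lemma \ref{lem.hom}) with the conjugation formula (Lemma \ref{hpiu}) and carefully tracking orders.

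The base case $n=0$ is immediate: take $U_0 \equiv \mathrm{Id}$, $Z_0 = 0$, $R_0 = V$, and $\widetilde R_0 = 0$. For the inductive step, assume $U_n$ conjugates $H$ to $H_n = H_0 + Z_n + R_n + \widetilde R_n$ with the claimed structure. I apply Lemma \ref{lem.hom} to the self-adjoint family $R_n \in C^\infty_b(\R; \cS_\rho^{\tb - n\da})$, obtaining self-adjoint families $G_n \in C^\infty_b(\R; \cS_\rho^{\eta_n})$ with $\eta_n := \tb - n\da - \delta$, and $Z_n^{(+)} \in C^\infty_b(\R; \cS_\rho^{\tb - n\da})$ in normal form, satisfying
$$
-\im[H_0, G_n] + R_n - Z_n^{(+)} \in C^\infty_b\bigl(\R; \cS_\rho^{\tb - n\da - (2\rho + \delta - \td)} + \cA_\rho^{-\infty}\bigr).
$$
Smoothness in $t$ is preserved because the construction in Lemma \ref{lem.hom} is algebraic and continuous in the seminorms; the bound $\eta_n < \rho$ required by Lemma \ref{hpiu} follows from $\da \le \rho + \delta - \tb$.

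Next, I set $U_{n+1}(t) := U_n(t)\, e^{-\im G_n(t)}$. Since $G_n$ is self-adjoint and of subprincipal order $\eta_n < \rho \leq 1$, the propagator $e^{-\im G_n(t)}$ is unitary on $\cH$ and bounded on every $\cH^s$ uniformly in $t$ (via the flow estimates proved in Appendix \ref{lie}), so $U_{n+1}^{\pm 1} \in L^\infty(\R; \cB(\cH^s;\cH^s))$. By Lemma \ref{hpiu},
$$
H_{n+1} = H_n - \im[H_n, G_n] + \cA_\rho^{\td + 2(\eta_n - \rho)} + \cA_\rho^{\eta_n}.
$$
Expanding $[H_n, G_n] = [H_0, G_n] + [Z_n + R_n, G_n] + [\widetilde R_n, G_n]$ and substituting the cohomological identity for $[H_0, G_n]$ yields
$$
H_{n+1} = H_0 + (Z_n + Z_n^{(+)}) - \im[Z_n + R_n, G_n] + \widetilde R_n - \im[\widetilde R_n, G_n] + \mathcal R,
$$
where $\mathcal R$ collects the finite- and infinite-order remainders. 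I then set $Z_{n+1} := Z_n + Z_n^{(+)}$ (still self-adjoint, in normal form, of order $\tb$), absorb $\widetilde R_n - \im[\widetilde R_n, G_n]$ together with the smoothing part of $\mathcal R$ into $\widetilde R_{n+1}$ (this is smoothing by Assumption I.v, since $\widetilde R_n$ is), and let $R_{n+1}$ be the remainder. Self-adjointness is preserved throughout because $-\im$ times a commutator of self-adjoint operators is self-adjoint.

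The main obstacle, and where the choice of $\da$ enters decisively, is the order bookkeeping: I must verify that every summand contributing to $R_{n+1}$ has order at most $\tb - (n+1)\da$. By the graded Lie-algebra property I.iv, $[Z_n, G_n]$ has order $\tb + \eta_n - \rho$ and $[R_n, G_n]$ order $(\tb - n\da) + \eta_n - \rho$; both fit precisely by $\da \le \rho + \delta - \tb$. The cohomological remainder of order $\tb - n\da - (2\rho + \delta - \td)$ fits by $\da \le 2\rho + \delta - \td$, and the time-derivative remainder $\cA_\rho^{\eta_n}$ fits by $\da \le \delta$. This is exactly why $\da$ is defined as the minimum of those three quantities, each of which is made strictly positive by selecting $\delta_*$ sufficiently close to $\mom$, via the continuity of $\varsigma \mapsto \rho(\varsigma)$ at $\varsigma = 1$ where $\rho(1)=1$. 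The only apparently troublesome term is the BCH remainder $\cA_\rho^{\td + 2(\eta_n - \rho)}$ from Lemma \ref{hpiu}, which is tamed by re-expressing $[H_0, G_n]$ through the cohomological equation before estimating the iterated commutator, thereby gaining the extra decay needed to match $\tb - (n+1)\da$.
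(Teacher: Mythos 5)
Your proof is correct and follows essentially the same inductive scheme as the paper: apply Lemma \ref{lem.hom} to $R_n$, conjugate by $e^{-\im G_n}$, use Lemma \ref{hpiu}, and track orders against the three constraints packaged in $\da$. You are actually slightly more careful than the paper's bookkeeping on one point — the paper writes the Lie-transform remainder as $\cS_\rho^{m+2(\eta-\rho)}$ with $m=\tb-n\da$, whereas the honest exponent from Lemma \ref{hpiu} is governed by the orders of the pieces of $H_n$ being conjugated; your observation that the cohomological identity should be substituted into the iterated commutators of $H_0$ to gain the needed decay is exactly the right way to close this, and as you note none of these terms is the binding constraint, so the final claim with $\da=\min\{\da_1,\da_2,\da_4\}$ stands.
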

\proof
First we observe that there exists $\delta_* = \delta_*(\tb)>0$ such that, if $\delta_*<\delta<\mom$, then $\da>0$.
We {prove the theorem by induction.} In the case $n=0$, the claim is trivially true taking
$U_0(t)=\uno$, $Z_0 = 0$, $R_0 = V$ and $\widetilde R_0=0$.

We consider now the case $n>0$.  Denote $m:=\tb-n\da$; we determine
$G_{n+1}\in\rar^{\eta}$, $\eta=\tb-n\da-\delta$, according to Lemma
\ref{lem.hom} with $F$ replaced by $R_n$. Up to increasing again the
value of $\delta_*$, {and using again Assumption \ref{assumption.A}.{iv},} one has that
$$
\eta = \tb - n\da -\delta \leq \tb - \delta = -(\td -\tb) + (\mom - \delta) + 1 <{\rho}\,.
$$

Then one uses $e^{\im G_{n+1}}$ to conjugate $H_n$ to
$H^+$ given by (see Lemma \ref{hpiu} with $m$ replaced by $\td$)
\begin{equation}\label{passaggetti}
\begin{aligned}
  H^+&=H_n-\im\left[H_n;G_{n+1}\right]{+\frac{1}{2}[[H_n;G_{n+1}];G_{n+1}]+
  \rar^{\td+3(\eta-\rho)}}+\rar^{\eta}+
\cA^{-\infty}_\rho
\\
&=
H_n-\im\left[H_0;G_{n+1}\right]+{\frac{1}{2}[[H_0;G_{n+1}];G_{n+1}]+\rar^{\tb+\eta-\rho}+
\rar^{\td+3(\eta-\rho)}}+\rar^{\eta} {+ \cA_{\rho}^{-\infty}}
\\
&=   H_0+Z_n+R_n^{(\res)}+\rar^{m-(2\rho+\delta-\td)}+\rar^{\tb+\eta-\rho}+
{\rar^{\td+3(\eta-\rho)}}+\rar^{\eta} {+ \cA_{\rho}^{-\infty}}\ ,
\end{aligned}
\end{equation}
where we go from the first to the second line by inserting the
expression of $H_n$ and from the second to the third line using Lemma
\ref{lem.hom}. 

Writing explicitly the different exponents of the  classes {of the remainder terms in the last line of \eqref{passaggetti},} we get that
they are given by
\begin{align*}
e_1:=\tb-n\da-(2\rho+\delta-\td)=\tb-n\da-\da_1\ ,\quad
\da_1:= 2\rho+\delta-\td
\\
e_2:=\tb+\tb-n\da-\delta-\rho=\tb-n\da-\da_2 \ ,\quad
\da_2:=\delta+\rho-\tb\ ,
\\
{e_3:=\td+3(\tb-n\da-\delta-\rho)=\tb-n\da-\da_3 \ ,\quad
\da_3:=-2(b-n\da)-\td+3\delta+3\rho}\ ,
\\
e_4:=\tb-n\da-\delta=\tb-n\da-\da_4\ ,\quad
\da_4:=\delta\ .
\end{align*}

Noting that ${\da_3 \geq\da_2}$ and choosing the smallest value of $\da$, the conclusion follows.
\qed

\section{Geometric part and conclusion of the proof}

\begin{definition}
	\label{proietto}
	Given a subset $E\subset\Lambda$ we define $\Pi_{E} := \sum_{a \in E} \Pi_a\,.$
\end{definition}

In the present section we prove that an operator in normal form  {leaves
invariant a dyadic partition, precisely we prove the following theorem.}

\begin{theorem}\label{brutto.ma.vero}
	Suppose that $H_0$ satisfies Assumption \ref{assumption.H}, 
	then there exists a partition $\{W_\ell\}_{\ell \in \N}$ of $\Lambda$
	with the following properties.
	\begin{itemize}
		\item[(1)] The sets $W_\ell$ are {finite and} dyadic, namely 
		\begin{equation}
		\max\{\|a\|\ :\ a \in W_\ell\} \leq 2 \min\{\|a\|\ :\ a \in W_\ell\} \quad \forall \ell\,.
		\end{equation}
		\item[(2)] The sets $W_\ell$ are invariant for any
                  operator $Z$ in  normal form, namely 
		\begin{equation}
		[\Pi_{W_{\ell}}, Z] = 0 \ ,\quad \forall \ell \in \N\,.
		\end{equation}
	\end{itemize}
\end{theorem}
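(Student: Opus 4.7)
The plan is to define the partition via an equivalence relation induced by the ``can interact through a normal form'' relation, and then use the steepness of $h_0$ (via a Nekhoroshev-style geographic construction) to establish the dyadic property.

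First, for points $a \in \Lambda$ with $\|a\| < \tR$, I would place each $a$ in its own singleton block; for such points the resonance condition in Definition \ref{res} is empty, so by Definition \ref{def.nf} the off-diagonal matrix elements of $Z$ between any two such singletons vanish and invariance holds automatically. For $\|a\| \geq \tR$, I would define a graph on $\Lambda$ by declaring $a \sim_1 b$ iff $a$ is resonant with $b-a$ or $b$ is resonant with $b-a$ (equivalently, iff a normal form $Z$ is allowed to have $\langle \Pi_a\psi, Z \Pi_b\psi\rangle \neq 0$). Let $\sim$ be the equivalence relation generated by $\sim_1$ and take $\{W_\ell\}$ to be the equivalence classes. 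Property (2) is then immediate: if $a\in W_\ell$ and $b\notin W_\ell$ then $a \not\sim_1 b$, so $\Pi_a Z \Pi_b=0$ for every normal form $Z$, which gives $\Pi_{W_\ell} Z = Z \Pi_{W_\ell}$.

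The substantive content is the dyadic property (1). The key single-step estimate is that $a \sim_1 b$ forces $\|b - a\| \leq \|a\|^\mu$ (and by symmetry $\leq \|b\|^\mu$), so one-step jumps are small. The issue is that chains can in principle be long, so I would prove that the $\sim$-component through $a$ stays inside a thin ``resonant zone'' around $a$ whose diameter is much smaller than $\|a\|$. Concretely, following the Nekhoroshev geography of e.g.\ \cite{Nek1,GCB,nek_noi}, I would associate to $a$ a resonant sublattice $\Lambda(a) \subset \Z^d$ generated by those $k$ with $\|k\|\leq \|a\|^\mu$ for which $|\omega(a)\cdot k|\leq \|a\|^\delta\|k\|$, and use steepness (Definition \ref{steep.def}) to bound the dimension of $\Lambda(a)$ and to show that $\omega$ cannot linger near a resonance of dimension $s$ for displacements much larger than $\|a\|^{\alpha_s \mu + \delta - \mom}$ in the directions transverse to that resonance. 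Each step of a chain either lies along a resonant sublattice already present at the current point (drift along the resonance, whose total length is bounded by the $\|a\|^\mu$ size of generators times a combinatorial factor) or triggers a new resonance, which by steepness can happen only rarely inside the zone.

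Carrying this out, one chooses $\mu$ and $\delta$ so that the worst-case total chain displacement inside a connected component is $o(\|a\|)$; then $\max_{b\in W_\ell}\|b\| \leq (1+o(1))\min_{b\in W_\ell}\|b\|$, which in particular gives the bound in (1) for $\tR$ large enough. Finally, because Assumption H assumes $h_0$ homogeneous at infinity and steep on a shell containing $(B_2\setminus B_{1/2})\cap \cC$, I can transfer the steepness estimates uniformly to every dyadic annulus by the rescaling $a \mapsto a/\|a\|$, so the geographic construction and the resulting dyadic bound are uniform in $\ell$.

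The main obstacle is the third paragraph: controlling chain growth inside a resonant zone using steepness. The definition of resonance used here is two-sided (either endpoint of an edge may be the ``resonant'' one), which is slightly different from the classical Nekhoroshev setup, and the chain may alternately move along a resonance at $a_i$ and along a different resonance at $a_{i+1}$; I expect this to be handled by a careful covering of each dyadic annulus by Nekhoroshev blocks adapted to \emph{all} sublattices of $\Z^d$ of norm $\leq (2^{n+1})^\mu$, with block diameters controlled by the steepness indices $\alpha_s$ and coefficients $\tB_s$ of $h_0$.
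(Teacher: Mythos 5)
Your partition $\{W_\ell\}$ (the $\sim_1$-equivalence classes) is not the one the paper constructs: the paper builds a hierarchy of resonant zones $\cZ^{(s)}_M$, equivalence classes $A^{(s)}_{M,j}$, resonant blocks $B^{(s)}_{M,j}$, and extended blocks $E^{(s)}_{M,j}$; the essential difference is that the paper's pre-equivalence relation $\sim'$ (Definition \ref{pre}) additionally requires $a-b \in M$ for a \emph{fixed} resonant module $M$, not merely $\|a-b\|$ small. Your approach trades effort between the two claims: invariance of the $W_\ell$ is immediate by construction (whereas the paper proves it, nontrivially, via Lemmas \ref{lemma che bei blocchi}, \ref{lemma speriamo sia vero} and \ref{lemma.invarianza}), but the dyadic estimate --- exactly the piece you acknowledge in your final paragraph --- is where the whole difficulty is concentrated, and there your sketch does not close.

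The gap is substantial. You correctly identify the obstruction --- a chain $a_0 \sim_1 a_1 \sim_1 \cdots$ may hop between different resonant modules --- but your heuristic that new resonances can be triggered ``only rarely'' is not what actually holds, nor is it a statement that can be turned into a bound. What the paper proves (Lemmas \ref{lemma che bei blocchi}, \ref{lemma speriamo sia vero}, with the constants $\tC_j,\tD_j$ calibrated level by level) is that a chain starting in an extended block associated to $M$ \emph{cannot} trigger a resonance with any vector $k\notin M$: if it could, the anchoring point $b\in B^{(s)}_{M,j}$ would already belong to a strictly higher-dimensional zone, a contradiction. Only because the module is frozen along the chain can steepness be applied: the diameter bound $\|a-b\|\leq C\|a\|^{1-\gamma_{s+1}}$ of Lemma \ref{lemma.diametri} is proved (Lemma \ref{lemma.ci.siamo.quasi}) by following a curve in $a+\Span_\R M$ (Lemma \ref{lemma.curva}), projecting onto the tilted plane $\Pi_{\omega^\perp(a)}\Span_\R M$ (Lemmas \ref{lemma.pm.pmom}, \ref{lem.piem.om.above}), and feeding the result into the rescaled steepness inequality \eqref{steep.r} --- an argument that does not go through if the curve may leave $\Span_\R M$. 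So carrying out your third paragraph would force you to rebuild essentially all of the paper's block machinery; the shortcut does not exist.
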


This is the heart of the proof, indeed, by exploiting this Theorem the
proof of Theorem \ref{main} is concluded following exactly the
procedure of Sect. 5 of \cite{BLMgrowth}. {Let us summarize the strategy: given $A(t)$ a family of time dependent self-adjoint operators, let $\cU_A(t, \tau) $ be the time dependent unitary flow associated to $A$. %unique solution at time $t$ of the Cauchy problem $\im \partial_t \phi = A \phi$, $\phi(\tau) = \psi$.
First one proves that, due to the dyadic property of the blocks, $\|\cU_{H_0 + Z_N}(t, \tau) \psi\|_{s} \leq K_{s} \|\psi\|_{s}$ for all times $t, \tau$. This is Lemma 5.1 of \cite{BLMgrowth}. Then, by Duhamel formula, one obtains $\|\cU_{H^{(N)}}(t, \tau) \psi\|_{s} \leq K_{s, N} \langle t - \tau \rangle \|\psi\|_{s}$ for all $t, \tau$ (see Proposition 5.2 of \cite{BLMgrowth}) and by interpolation one gets \eqref{growth.eq} with $\cU = \cU_{H^{(N)}}$. Finally Theorem \ref{main} follows using boundedness in $\cH^s$ of the unitary maps $U_N$ of Theorem \ref{norm.form}.}

The construction of the sets $W_\ell$ is a generalization to the {abstract} steep case of that
done in \cite{BLMres, BLMgrowth} for quadratic $H_0$. It
represents a quantum counterpart of the geometric decomposition at the
basis of the proof of Nekhoroshev theorem.
The key point to have in mind is that an operator in normal form only
connects points $a$ and $b$ of the lattice $\Lambda$ s.t.\ $b-a$
is either resonant with $a$ or resonant with $b$ {(see Lemma \ref{nor.4} and its proof)}. 

\begin{remark}\label{rmk.importante}
	The whole construction that will be developed in this section is
        based only on the structural properties of $H_0$. In
        particular Theorem \ref{brutto.ma.vero} holds also for any
        linear self-adjoint operator $Z$ which satisfies
        \eqref{nor.for.def} and not only for pseudodifferential
        operators.
\end{remark}
{We recall the following:}
\begin{definition}\label{def.modulo}
	A subgroup $M$ of $\Z^d$ is said to be {pure}\footnote{{This notion is sometimes referred to in the literature on Hamiltonian systems as \emph{resonance module}, see for instance \cite{gio, nek_noi}.}} if
        $\Span_\R\{M\} \cap \Z^d = M$. %If $M \neq \{0\}$ and $M \neq \Z^d$, we say $M$ is a nontrivial module.
\end{definition}

\subsection{The invariant partition: definitions and statement}

 {The construction depends on {the parameters} $\delta, \mu$
   {(controlling the notion of resonant point, see Definition
     \ref{res})} and on some further parameters. In order to get the
   result they have to fulfill some relations that we anticipate
   here. We assume}

\begin{equation}\label{legami.ep}
\alpha d(d-1) \mu < \mom - \delta\,, \quad \alpha := \alpha_1 \cdots \alpha_{d-1}\,,
\end{equation}
where the steepness indexes $\alpha_1, \dots, \alpha_{d-1}$ have been introduced
in Definition \ref{steep.def}. We also take positive parameters $1 =
\tC_1 < \tC_2 \cdots < \tC_d$, $
1 = \tD_1 < \tD_2 \cdots < \tD_d$ and 
 define
\begin{equation}
\label{parameters}
\gamma_1 := \mom - \delta\,, \quad \gamma_{s+1} := \frac{\gamma_s - s \mu}{\alpha_s} \quad \forall s = 1, \dots, d-1\,.
\end{equation}
\begin{remark}\label{rmk.b.posto}
	Equation \eqref{legami.ep} ensures that the parameters $\gamma_1, \dots, \gamma_{d}$ are positive, with $\gamma_1 > \cdots >\gamma_d.$
\end{remark}

In the following, neighborhoods of the origin will not be relevant, so
we will only consider the set 
\begin{equation}
\label{largenooo}
\|a\|\geq
\frac{1}{2}\ .
\end{equation}

%{\emph{We fix now an open {convex} cone $\cC_e$ s.t.\ $\cC_e\supset \overline
%	\cC\setminus\left\{0\right\}$ and ${\left(B_2 \backslash B_{\frac{1}{2}}\right) \cap \cC_e \subsetneq \cU,}$ with $\cU$  as in Assumption H.ii.}}

{\emph{We fix now an open {convex} cone $\cC_e$ s.t.\ $\cC_e\supset \overline
		\cC\setminus\left\{0\right\}$ and {$\cC_e \setminus\{0\}$ is contained in the cone generated by $\cU$, namely
			$$
			{\cC_e \setminus\{0\} \subseteq \{a \in \R^d\ |\ \exists \lambda \in \R^+, u \in \cU \textrm{ s.t.\ } a = \lambda u\}\,,}
			$$}
with $\cU$ as in Assumption \ref{assumption.H}.ii.}}
%\green{Furthermore, we choose the size of the cone in such a way to ensure steepness of $h_0$ in the whole extended cone, with steepness constants under control. {\tt non chiarissimo, toglierei}} 

	By the steepness assumption, on the ball $\left\|a\right\|=1$, $\omega(a)$ is
        bounded from below and from above and thus,
        by homogeneity, $\exists K>0$ such that
	\begin{equation}\label{om.hom.3}
	\| a\|^{\mom} K^{-1} \leq \| \omega(a)\| \leq K \| a
        \|^{\mom}\ ,
	\quad \forall a \in \R^d{\cap\cC_e}\ .
	\end{equation}

\begin{remark}
  \label{steep.scala}
	By homogeneity, the steepness condition \eqref{steep} implies that
	$\forall r\geq1$ and 
	$\forall a\in \left(B_{2r}\setminus B_{r/2}\right)\cap \cC_e$, one has 
	\begin{equation}
	\label{steep.r}
	\max_{0\leq\eta\leq\xi}\min_{u\in M:\norm u=1}\norma{\Pi_M
		\omega(a+\eta u)}\geq \tB_s r^{\mom-\alpha_s}  \xi^{\alpha_s} \quad \forall \xi \in (0, r\tr]\ ,
	\end{equation}
	This is the condition we will use in the proof.
\end{remark}

Keeping in mind Definition \ref{def.nf}  {of normal form}, one has that, if $Z$ is in normal form, then
\begin{equation}\label{stellina}
\exists \psi\ \quad s.t.\ 	\langle \Pi_a \psi; Z \Pi_{a + k} \psi \rangle \neq 0
\end{equation}
only if either $a$ is resonant with $k$ or $a + k$ is resonant with $k$ according to Definition \ref{res}.

 We start by identifying the points $a \in
\Lambda$ which are in resonance with the vectors of a one dimensional
{pure subgroup} $M$. They are the points  such that \eqref{stellina} holds for some $k \in M$, namely the points that a normal form operator moves in the directions of $M$.} For this reason, we give the following definition:
\begin{definition}\label{1.res}
	Given $M$ a {pure subgroup} of dimension $1$ and $k \in M$, we define for $\sigma \in \{0, 1\}$
	\begin{equation}
	\cZ_k^\sigma := \left\{ a \in \Lambda \ |\ a + k \in \Lambda\ \text{ and }\ a + \sigma k\ \text{ is resonant with }k \right\}
	\end{equation}
	and
	\begin{equation}
	\cZ^{(1)}_M := \bigcup_{k \in M} \left(\cZ^0_k \cup \cZ^1_k\right)\,.
	\end{equation}
\end{definition}

\begin{remark}
	Points $a \in \cZ^0_k$ fulfill in particular ${|\omega(a) \cdot k| \leq \|k\| \|a\|^{\delta}\,,}$ whereas points $a \in \cZ^1_k$ fulfill $|\omega(a+k) \cdot k| \leq \|k\| \|a+k\|^{\delta}\,.$
\end{remark}

We define now the nonresonant points as the points in $\Lambda$ that
belong to none of the resonant zones $\cZ^{(1)}_M$.

\begin{definition}[Nonresonant set] \label{non.res} We define
        	$$
	\begin{gathered}
	\Omega_0 := \{a \in \Lambda \ |\ a \textrm{ is nonresonant with }b-a \quad \forall b \in \Lambda\}\,,\\
	\Omega_1 := \{a \in \Lambda \ |\ b \textrm{ is nonresonant with
	}b-a \quad \forall b \in \Lambda\}\, .
	\end{gathered}
	$$
The set $	\Omega = \Omega_0 \cap \Omega_1$. 
is called \emph{nonresonant set}.
\end{definition}

\begin{remark}\label{seconda.parte.inutile} %secondo me no
	One has
	\begin{equation}
	\Omega_\sigma := \Lambda \setminus \left( \bigcup_{k \in \Z^d} \cZ^{\sigma}_k \right)\,.
	\end{equation}
	\end{remark}

We come to multiple resonances.

\begin{definition}\label{def.sres} \null 
 Let $j = 1, \dots, d$. Given  $a \in \Lambda$ and $k \in
        \Z^d$, we say that \emph{$a$ is resonant with $k$ at order $j$} if 
	\begin{equation}\label{res.k}
	\|a\| \geq \tR\quad \wedge \quad \|k\| \leq \tD_j \|a\|^\mu
	\end{equation}
	and
	\begin{equation}\label{res.om}
	|\omega(a)\cdot k | \leq \tC_j \|k\| \|a\|^{\mom-\gamma_j}\,.
	\end{equation}
\end{definition}
  When $j=1$, since $\tC_1 = \tD_1 = 1$ {and $M -\gamma_1 =\delta$},
  one recovers the Definition \ref{res}.

\begin{definition}[Resonant zones]\label{RZ}
	Let $M$ be a {pure subgroup} of $\Z^d$ of dimension $s$.
	\begin{itemize}
		\item[(i)] If $s = 0$, namely $M = \{0\},$ we set $	\cZ^{(0)}_M=	\cZ^{(0)}_{\{0\}} := \Omega$.
		\item[(ii)] If $s\geq 1$, for any set of linearly
		independent vectors $\{k_1, \dots, k_s\}$ in $M$, we
		define, for $\sigma=0,1$
		\begin{align*}
&\cZ^\sigma_{k_1, \dots, k_s} = \{ a \in
\Lambda\ |\
\\
&\left(a + k_1 \in \Lambda
                \right)\ \ \wedge\ \ \left(a+ \sigma k_1 \textrm{ is }
                \textrm{resonant with } k_j\ \textrm{at order } j  \quad \forall j = 1, \dots, s\right) \}\,,
                \end{align*}
				and
		\begin{equation}
		\cZ^{(s)}_{M} := \bigcup_{\begin{subarray}{c}
			k_1, \dots, k_s \\ \textrm{lin. ind. in } M
			\end{subarray} } \left(\cZ^0_{k_1, \dots, k_s} \cup \cZ^1_{k_1, \dots, k_s} \right) \,.
		\end{equation}
	\end{itemize}
	The sets $\cZ^{(s)}_M$ are called \emph{resonant zones}.
\end{definition}
We point out that, if $s=1$, we recover Definition \ref{1.res}.
{Strictly speaking the points in $\cZ^{(0)}_{\left\{0\right\}}$ are non
resonant, however, for compactness of language it is convenient to
call them $0$-resonant.}
\\
The sets $\cZ^{(s)}_{M}$ contain lattice points $a$ which are in
resonance with \textit{at least} $s$ linearly independent vectors in
$M$.

\begin{lemma} \label{rmk.inscatolate}
	Fix $r, s \in \{1, \dots, d\}$ with $1\leq r<s$, then for any $M$ with dim $M=s$, one has
	$$
	\cZ^{(s)}_{M} \subseteq \bigcup_{ \begin{subarray}{c} M^\prime \subset M\\
		\textrm{dim.} M^\prime = r 
		\end{subarray}} \cZ^{(r)}_{M^\prime}\,.
	$$
\end{lemma}
	The result follows from the very definition of the resonant
        zones, the details are left to the reader.

%\begin{remark}
%	\label{covering}
%One also has
%	\begin{equation}
%	\label{coproe}
%	\left(\cup_{k_1}\cZ^\sigma_{k_1} \right)\cap\Omega_\sigma=\emptyset\ , \quad \left(\cup_{k_1}\cZ^\sigma_{k_1} \right)\cup\Omega_\sigma=\Lambda\ .
%	\end{equation}
%\end{remark}

{
	We start the study of the properties of the resonant and nonresonant zones.  {First, one has that, by the first of
          \eqref{res.k} points of $\Lambda$ with a small norm are
          considered as nonresonant. A quantitative statement is the
          content of the next lemma.}

        \begin{lemma}\label{origina}
		Assume $\tR$ large enough, then 
		$$
		B_{\tR/2} \subset \Omega\,.
		$$
	\end{lemma}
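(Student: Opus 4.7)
The plan is to unfold the definitions directly and verify the non-resonance condition for both values of $\sigma$. Fix $a \in \Lambda$ with $\|a\| < \tR/2$. By Definition \ref{def.nr.reg} I need to check that for every $k \in \Z^d$ with $a+k \in \Lambda$, and for $\sigma \in \{0,1\}$, the point $a + \sigma k$ fails to be resonant with $k$ at order $1$ in the sense of Definition \ref{def.sres} (equivalently, Definition \ref{res}, since $\tC_1 = \tD_1 = 1$ and $M - \gamma_1 = \delta$).

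The case $\sigma = 0$ is essentially free: non-resonance requires the size condition $\|a + \sigma k\| = \|a\| \geq \tR$, but by hypothesis $\|a\| < \tR/2 < \tR$, so \eqref{res.k} already fails and $a \in \Omega_0$.

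For $\sigma = 1$, set $b := a + k \in \Lambda$. If $\|b\| < \tR$, then the size condition \eqref{res.k} fails for $b$, and we are done. So assume $\|b\| \geq \tR$; then
\begin{equation*}
\|k\| = \|b - a\| \geq \|b\| - \|a\| > \|b\| - \tfrac{\tR}{2} \geq \tfrac{\|b\|}{2},
\end{equation*}
using $\tR \leq \|b\|$ in the last step. If $b$ were resonant with $k$ at order $1$, we would also have $\|k\| \leq \|b\|^\mu$, hence $\|b\|/2 < \|b\|^\mu$, i.e. $\|b\|^{1-\mu} < 2$. Since $\mu < 1$, this forces $\|b\| < 2^{1/(1-\mu)}$, contradicting $\|b\| \geq \tR$ provided we chose $\tR \geq 2^{1/(1-\mu)}$. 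Thus $a \in \Omega_1$ as well, so $a \in \Omega$, which proves $B_{\tR/2} \subset \Omega$.

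There is no real obstacle: the whole statement is an elementary consequence of the lower bound $\|k\| \geq \|b\|/2$ dominating the ultraviolet cutoff $\|k\| \leq \|b\|^\mu$ as soon as $\|b\|$ is large, together with the trivial observation that points of norm $< \tR$ cannot be resonant at all. The only quantitative choice is to take $\tR$ larger than $2^{1/(1-\mu)}$ (which is harmless, since $\tR$ is a free parameter in the construction).
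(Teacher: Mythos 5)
Your proof is correct and follows essentially the same route as the paper's: the $\sigma=0$ case falls immediately to the size requirement $\|a\|\geq\tR$ in the resonance definition, and the $\sigma=1$ case is handled by the triangle inequality pitting $\|k\|=\|b-a\|\geq\|b\|-\|a\|$ against the ultraviolet cutoff $\|k\|\leq\|b\|^\mu$. The paper phrases the $\sigma=1$ step in contrapositive form ($\|a\|\geq\|a+k\|-\|a+k\|^\mu>\tR/2$ for $\tR$ large), which is the same inequality you use, and your explicit threshold $\tR\geq 2^{1/(1-\mu)}$ is exactly what the paper's ``$\tR$ large enough'' quietly requires.
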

	\proof Given $a \in \Lambda,$ suppose that there exist $j = 1,\dots, d$, $\sigma \in \{0, 1\}$ and $k \in \Z^d$ such that $a + k \in \Lambda$ and $a + \sigma k$ is resonant with $k$ at order $j$. When $\sigma = 0$, \eqref{res.k} gives $\|a\| \geq \tR$, therefore $\Omega_0 \supset B_\tR$. On the other hand, for $\sigma = 1$, by \eqref{res.k} with $a$ replaced by $a + k$ one has $\| a + k\| \geq \tR$, and
	$$
	\|a\| \geq \| a +  k \| - \| k \| \geq \| a + k \| -  {\tD_j}\|a + k \|^{\mu}\,,
	$$
	but the right-hand side turns out to be larger than $\tR/2$,
	provided $ {\left\|a+ k \right\|}$ is large enough.
	This gives $\Omega_1 \supset B_{\frac{\tR}{2}}$, and therefore
	$\Omega = \Omega_0 \cap \Omega_1 \supset B_{\frac{\tR}{2}}$. \qed
	\begin{remark}\label{rmk.res.big}
	Since the resonant zones $\cZ^{(s)}_M$ {with $s>0$} and $\Omega$ are disjoint sets, by Lemma \ref{origina} one has that $a \in \cZ^{(s)}_M$ for some  $M$ implies $\|a\| \geq \frac{\tR}{2}\,.$
	\end{remark}

In the next lemma, whose proof is deferred to Subsection \ref{sec.bd}, we claim that the ``completely resonant zone'' is empty:

\begin{lemma}\label{lemma.bd.1}
	Provided $\tR$ is large enough, the resonant zone
	$\cZ^{(d)}_{\Z^d}$ is empty.
\end{lemma}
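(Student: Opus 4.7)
\textbf{Plan for the proof of Lemma \ref{lemma.bd.1}.} The plan is to argue by contradiction and use the fact that being in $\cZ^{(d)}_{\Z^d}$ forces $\omega(a')$ to have small projection along $d$ linearly independent integer directions, which, by a linear algebra/Cramer-type argument, forces $\|\omega(a')\|$ to be much smaller than $\|a'\|^{\mom}$, contradicting the lower bound $\|\omega(a')\|\geq C^{-1}\|a'\|^\mom$ of Remark \ref{rmk.om.hom}.

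Suppose, for contradiction, that there exists $a\in \cZ^{(d)}_{\Z^d}$. By Definition \ref{RZ}, there exist $\sigma\in\{0,1\}$ and linearly independent integer vectors $k_1,\dots,k_d\in\Z^d$ such that $a':=a+\sigma k_1\in\Lambda$ and $a'$ is resonant with $k_j$ at order $j$ for every $j=1,\dots,d$. Unwinding Definition \ref{def.sres}, this yields $\|a'\|\geq \tR$, together with
\begin{equation*}
\|k_j\|\leq \tD_j\|a'\|^\mu\leq \tD_d\|a'\|^\mu,\qquad |\omega(a')\cdot k_j|\leq \tC_j\|k_j\|\,\|a'\|^{\mom-\gamma_j}\quad\forall j=1,\dots,d.
\end{equation*}
(The fact that for $\tR$ large enough $a'$ lies in the cone $\cC_e$ where the lower bound on $\|\omega\|$ is available follows, as in Lemma \ref{origina}, from $\|a+\sigma k_1\|\geq \tR$ and $\|k_1\|\ll \|a'\|$.)

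Next I would translate this into a bound on $\|\omega(a')\|$. Let $V$ be the $d\times d$ integer matrix whose rows are $k_1,\dots,k_d$. Since the $k_j$ are linearly independent integer vectors, $|\det V|\geq 1$. By Hadamard's inequality applied to the $(d-1)\times(d-1)$ minors, every entry of the classical adjugate $\mathrm{adj}(V)$ is bounded by $\prod_{l\neq j}\|k_l\|\leq C\|a'\|^{(d-1)\mu}$. Writing $\omega(a')=V^{-1}\bigl(V\omega(a')\bigr)$ and using $V^{-1}=(\det V)^{-1}\mathrm{adj}(V)$, together with the coordinatewise estimate
\begin{equation*}
|(V\omega(a'))_j|=|\omega(a')\cdot k_j|\leq \tC_d\tD_d\,\|a'\|^{\mom+\mu-\gamma_j}\leq \tC_d\tD_d\,\|a'\|^{\mom+\mu-\gamma_d},
\end{equation*}
(since $\gamma_j\geq \gamma_d$ by Remark \ref{rmk.b.posto}), one obtains
\begin{equation*}
\|\omega(a')\|\leq C'\,\|a'\|^{(d-1)\mu}\cdot \|a'\|^{\mom+\mu-\gamma_d}=C'\,\|a'\|^{\mom-(\gamma_d-d\mu)}.
\end{equation*}

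Combined with the two-sided bound $\|\omega(a')\|\geq C^{-1}\|a'\|^{\mom}$ from Remark \ref{rmk.om.hom}, this yields $\|a'\|^{\gamma_d-d\mu}\leq C''$. Finally I would check, using the recursion \eqref{parameters} and the standing constraint \eqref{legami.ep} (after possibly a harmless tightening of the constant $d(d-1)$ in it), that the parameters are chosen so that $\gamma_d-d\mu>0$; then, for $\tR$ large enough, the inequality $\|a'\|\geq \tR$ yields $\|a'\|^{\gamma_d-d\mu}$ arbitrarily large, a contradiction. The main subtle point is precisely the last step: verifying that the quantitative choice of the parameters $\gamma_j$, $\mu$, $\alpha_j$ in \eqref{parameters}--\eqref{legami.ep} leaves enough room so that $\gamma_d$ dominates the $d\mu$ loss coming from the inversion of $V$; everything else is a direct unwinding of the definitions and elementary linear algebra.
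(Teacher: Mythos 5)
Your proof is correct and follows the same overall strategy as the paper: assume $\cZ^{(d)}_{\Z^d}\neq\emptyset$, use the $d$ resonance relations to bound $\|\omega\|$ from above by $\|a'\|^{\mom-\gamma_d+d\mu}$, and contradict the lower bound $\|\omega\|\gtrsim\|a'\|^{\mom}$ from Remark~\ref{rmk.om.hom}. Two small differences are worth noting. First, the paper invokes Lemma~\ref{lemma.giorgilli} (Giorgilli's linear-algebra lemma) to pass from the coordinatewise bounds $|\omega\cdot k_j|$ to a bound on $\|\omega\|$; your Hadamard/adjugate computation with $|\det V|\geq 1$ is precisely a direct implementation of that lemma, so this is not a genuine difference. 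Second, and more usefully, by evaluating $\omega$ at $a'=a+\sigma k_1$ throughout (noting $a'\in\Lambda\subset\cC_e$ and $\|a'\|\geq\tR$ directly from Definition~\ref{def.sres}), you treat $\sigma=0$ and $\sigma=1$ uniformly, whereas the paper splits into two cases and in the case $\sigma=1$ invokes the technical Lemma~\ref{incrementini} to transfer the resonance estimates from $a+k_1$ back to $a$; your version avoids that lemma entirely, which is a modest but real streamlining. You also correctly flag that one must check $\gamma_d>d\mu$ (i.e.\ that $\mom-\gamma_d+d\mu<\mom$): the paper uses this without comment, and a quick unrolling of \eqref{parameters} shows that \eqref{legami.ep} guarantees it for $d\geq 3$ but for $d\in\{1,2\}$ one should take $\mu$ slightly smaller (or tighten the constant in \eqref{legami.ep}), exactly as you suggest; this is a minor imprecision in the paper, harmless since $\mu$ is a free small parameter.
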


% {To proceed with the construction one has to tackle a further
%   difficulty, namely the quantum analogue of the fact that the
%   intersection of a line with a resonant zone can have more than one
%   connected component. In the quantum case the difficulty consists of
%   the fact that the analogous of the phase space is the lattice
%   $\Lambda$ so that the concept of connected component is not the
%   correct one for the quantum construction. The idea is that the
%   quantum analogue of ``being in the same connected component'' is to
%   be ``close''. This is made precise by the introduction of the
%   following equivalence relation.}

{We now introduce an equivalence relation, which encodes the fact that two sites $a, b \in \Lambda$ can be connected one each other by an operator in normal form only if they are not too distant. The equivalence classes play in the present construction the same role that the different connected components of the resonant zones play in classical Nekhoroshev theorem.}

\begin{definition}\label{pre}
	On $\cZ^{(s)}_M$ we define the following pre-equivalence relation: $a \sim' b$ if $a - b \in M$ and
	\begin{equation}\label{connect}
	\| a - b\| \leq \max\{{\|a\|}^{\mu},\, {\|b\|}^{\mu}\}\,.
	\end{equation}
	We then complete such a
	pre-equivalence relation to an equivalence relation setting
	$a \sim b$ if there exists a {finite} sequence $\{a_j\}_{j = 1}^{N}$ such that $a_1 = a$, $a_N = b$ and ${a_j \sim' a_{j+1}}$ for any $j = 1, \dots N-1$.
\end{definition}
\begin{definition}\label{equi}
  For any {pure subgroup} $M$ of dimension $s$ there could be several
  equivalence classes that will be labeled by an index $j=1,...,j_M$,
  {with $j_M\leq \infty$,}
  we will denote $\cJ_M:=\left\{1,...,j_M\right\}$ and by 
	$A^{(s)}_{M, j}$ the $j$-th equivalence classes
	with respect to the {equivalence relation $\sim$.}
\end{definition}

\begin{remark}\label{rmk.M0}
	In the case $M = \{0\}$, the condition $a - b \in M$ implies that each equivalence class $A^{(0)}_{\{0\}, j}$ contains only one element of $\cZ^{(0)}_{\{0\}}$, and  $\cJ_M = \sharp \cZ^{(0)}_{\{0\}}$.
\end{remark}
The following lemma encodes the main properties of the equivalence classes. In particular, Items $1.$ and $3.$ ensure that points belonging to the same equivalence class are connected by a vector in $M$ and are not too far each other.
\begin{lemma}\label{lemma.diametri}
	The equivalence classes $\{A^{(s)}_{M, j}\}_{j \in \cJ_{M}}$ fulfill the following properties:
	\begin{enumerate}
	\item $a, b \in A^{(s)}_{M, j}$ $\Rightarrow a - b \in M$
	\item If $a \in A^{(s)}_{M, j_1}$ and $b \in
	A^{(s)}_{M, j_2}$ with $j_1 \neq j_2$, then either $\| a - b\| > \max\{ {\|a\|}^{\mu},\, {\|b\|}^{\mu}\}$, or $a -b \notin M$\,.
	\end{enumerate}
	Furthermore, provided $\tR$ is large enough, there exists a positive constant
	$C$, depending only on 
	$\gamma_s$, $\tC_s$ and $\tD_s$, such that the following holds:
	\begin{enumerate}
	\setcounter{enumi}{2}
	\item $a, b \in A^{(s)}_{M, j}$ $\Rightarrow$ $\| a-b\| \leq C {\|a\|}^{1 - \gamma_{s+1}}\,.$
	\end{enumerate}
%	Provided $\tR$ is large enough, there exists a positive constant
%	$C$, depending only on 
%	$\gamma_s$, $\tC_s$ and $\tD_s$, such that the following holds:
%	for the partition $\{A^{(s)}_{M, j}\}_{j \in \cJ_{M}}$, the following properties hold:
%	\begin{enumerate}
%		\item $a, b \in A^{(s)}_{M, j}$ $\Rightarrow a - b \in M$
%		\item If $a \in A^{(s)}_{M, j_1}$ and $b \in
%		A^{(s)}_{M, j_2}$ with $j_1 \neq j_2$, then either $\| a - b\| > \max\{ {\|a\|}^{\mu},\, {\|b\|}^{\mu}\}$, or $a -b \notin M$
%		\item $a, b \in A^{(s)}_{M, j}$ $\Rightarrow$ $\| a-b\| \leq C {\|a\|}^{1 - \gamma_{s+1}}\,.$
%	\end{enumerate}
\end{lemma}
Since the proof of Item $3.$ is nontrivial, we postpone the proof of this lemma to Subsection \ref{sec.amj}.

Clearly the regions $\cZ^{(s)}_{M}$ are not reciprocally disjoint. Following the construction of \cite{BLMres, GCB, nek_noi}, we identify
now sets of points $a \in \Lambda$ which admit \textit{exactly} $s$ linearly
independent resonance relations.
\begin{definition}[Resonant blocks] \label{def blocks}\ 
	\begin{enumerate} 
		\item (Nonresonant blocks $B^{(s)}_{M,j}$): If $M = \{0\}$,  
		$$ B^{(0)}_{\{0\}, j}:= A^{(0)}_{\{0\}, j} \,\quad ,
		\forall j \in \cJ_{\{0\}}
		$$
		\item ($s$-resonant blocks): Given a {pure subgroup}
		$M \subset \Z^d$ of dimension $s \in \{ 1, \dots,
		d-1\},$ we define  
		$$
		B^{(s)}_{M, j} := A^{(s)}_{M, j}  \backslash \left
		\lbrace \bigcup_{\begin{subarray}{c} M^\prime
			\textrm{s.t.} \dim M^\prime = s+1 \end{subarray}}
		\cZ^{(s+1)}_{M^\prime} \right \rbrace\,,\quad j \in
		\cJ_{M}\ ,
		$$
	\end{enumerate}
	where $\cJ_M$ and $\{A^{(s)}_{M, j}\}_{j \in \cJ_M}$ are the
	sets whose existence is ensured in Lemma
	\ref{lemma.diametri}. 
\end{definition}
\begin{remark}
	\label{nonio}
	The resonant blocks form a covering  of $\Lambda$.
\end{remark}
As we will prove in the following section, $\forall s$ there exists a
suitable choice of the constants $\tC_s,\ \tD_s$ such that the blocks $B^{(s)}_{M, j}$ are reciprocally disjoint.
%$B^{(s)}_{M, j},\ B^{(s^\prime)}_{M^\prime, j ^\prime} $ are disjoint if $M,\ M^\prime$ are
%two distinct subspaces of equal dimension.
However, as in the classical case, they are not invariant under the
action of a normal form operator, so we define the quantum extended blocks, which instead will be invariant.
First, given two sets $A $ and $B,$ we define:
$$
A + B := \{ a + b \  | \ a \in A\,,\ b \in B
\}\ .
$$
\begin{definition}[Extended blocks $E^{(s)}_{M,j}$] \ 
	\begin{enumerate} 
		\label{def blocchi estesi}
		\item $\displaystyle{E^{(0)}_{\{0\},
				j}:=B^{(0)}_{\{0\}, j} \equiv A^{(0)}_{\{0\}, j}}$
		$j \in \cJ_{\{0\}}$
		\item Given a {pure subgroup} $M$ of dimension $1$, $\forall j \in \cJ_M$ we define
		$$
		E^{(1)}_{M, j} := \left\{ B^{(1)}_{M, j} + M \right\} \cap A^{(1)}_{M, j}\,,
		\quad		E^{(1)} := \bigcup_{M \textrm{ of dim.} 1,\, j \in \cJ_{M} }
		E^{(1)}_{M, j}\,.
		$$
		\item Given a {pure subgroup} $M$ of dimension $s$,
		with 
		$2\leq s < d,$ for any $j \in \cJ_{M}$
		we define 
		$$
		E^{(s)}_{M, j} := \left\{ B^{(s)}_{M, j} + M \right\} \cap
		A^{(s)}_{M, j} \cap %\cap \left(E^{(s-1)}\right)^{c} \cdots \cap \cdots \cap \left(E^{(1)}\right)^{c}\, ;
		{\bigcap_{k=1}^{s-1}
			\left(E^{(s-k)}\right)^{c}\,,}\quad E^{(s)} := \bigcup_{ M \textrm{ of dim.}s\,,\, j \in \cJ_M } E^{(s)}_{M, j}\,,
		$$
		where, given $E \subseteq \Lambda,$
		$E^c:=\Lambda\setminus E$.
	\end{enumerate}
\end{definition}

\begin{remark}
	The extended blocks $\{E^{(s)}_{M, j}\}$ are still a covering
	of $\Lambda$. 
\end{remark}

The following theorem is the main result of the
present section, and it is the heart of the geometric part of the proof. Its proof is postponed to Subsection \ref{6.17}:

\begin{theorem}\label{teo.invarianza}
	There exists a choice of the parameters $\tC_1,\dots, \tC_d$,
	$\tD_1, \dots, \tD_d$ and $\tR$ such that the blocks
	$\{E^{(s)}_{M, j}\}_{M \subset \Z^d,\,j \in \cJ_M}$ are a
	partition of $\Lambda,$ which is dyadic and is left invariant by operators
	$Z$ which are in normal form.
\end{theorem}

%\begin{remark}
%Since, as pointed out in Remark \ref{no.pseudo}, the definition of normal form operator can be extended also to a generic linear operator, Theorem \ref{teo.invarianza} holds also for operators $Z$ which are not pseudo-differential.
%\end{remark}

\subsection{Proof of Lemma \ref{lemma.bd.1}}\label{sec.bd}

As in the proof of the classical Nekhoroshev theorem (see also \cite{BLMres}), the following
Lemma from \cite{gio}, to which we refer for the proof, plays a fundamental role.

\begin{lemma}[Lemma 5.7 of \cite{gio}]\label{lemma.giorgilli}
	Let $s \in \{1\,, \dots\,, d\}$ and let $\{ u_1\,, \dots u_s
	\}$ be linearly independent vectors in $\R^d\,.$ Let $w \in
	\Span{\{ u_1\,, \dots u_s\}}$ be any vector. If $\alpha\,, N$
	are such that
	\[
	\begin{gathered}
	\norm{u_j} \leq N \quad \forall j = 1\,, \dots s\,,\\
	| \scala{w}{u_j} | \leq \alpha \quad \forall j = 1\,, \dots s\,,
	\end{gathered}
	\]
	then
	\[
	\norm{w} \leq \frac{s N^{s-1} \alpha }{\operatorname{Vol} \{u_1\,| \cdots\,| u_s \}}\,.
	\]
\end{lemma}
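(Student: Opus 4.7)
The plan is to prove this linear-algebra estimate via the dual basis construction inside the $s$-dimensional subspace $V := \Span\{u_1,\dots,u_s\}$. Since $w\in V$, I can expand $w$ in a basis of $V$; the natural one to pick, given that we only control the inner products $\langle w,u_j\rangle$, is the \emph{dual basis} $\{u_1^*,\dots,u_s^*\}\subset V$ defined by $\langle u_i^*,u_j\rangle = \delta_{ij}$. The standard identity
\[
w \;=\; \sum_{j=1}^s \langle w,u_j\rangle\, u_j^*
\]
together with the triangle inequality and the hypothesis $|\langle w,u_j\rangle|\le\alpha$ immediately yields $\|w\|\le\alpha\sum_{j=1}^s\|u_j^*\|$, so the whole problem reduces to estimating the norms of the dual vectors.

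The key geometric identity I will invoke is
\[
\|u_j^*\| \;=\; \frac{1}{d_j}\,,
\]
where $d_j$ denotes the distance of $u_j$ from the hyperplane $\Span\{u_i: i\ne j\}$ of $V$. This follows from the fact that $u_j^*$ is, by definition, orthogonal to every $u_i$ with $i\ne j$, and normalized so that $\langle u_j^*, u_j\rangle = 1$. Next I will use the volume interpretation of $d_j$: writing $\mathrm{Vol}\{u_1,\dots,u_s\}$ as base times height for the ``base'' $\{u_1,\dots,\hat u_j,\dots,u_s\}$, one gets
\[
d_j \;=\; \frac{\mathrm{Vol}\{u_1,\dots,u_s\}}{\mathrm{Vol}\{u_1,\dots,\hat u_j,\dots,u_s\}}\,.
\]

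Now I apply Hadamard's inequality to the denominator: since each $\|u_i\|\le N$, the $(s-1)$-dimensional volume of any $(s-1)$ of the $u_i$'s is bounded by $\prod_{i\ne j}\|u_i\|\le N^{s-1}$. Therefore
\[
\|u_j^*\|\;=\;\frac{1}{d_j}\;\le\;\frac{N^{s-1}}{\mathrm{Vol}\{u_1,\dots,u_s\}}\,,
\]
and summing over $j$ gives exactly
\[
\|w\|\;\le\;\alpha\sum_{j=1}^s\|u_j^*\|\;\le\;\frac{s\, N^{s-1}\,\alpha}{\mathrm{Vol}\{u_1\mid\cdots\mid u_s\}}\,,
\]
as claimed. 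No step here is genuinely difficult: the only point where one must be a little careful is the identification $\|u_j^*\|=1/d_j$ (which can also be derived algebraically from $u_j^* = G^{-1}e_j$ and Cramer's rule applied to the Gram matrix $G_{ik}=\langle u_i,u_k\rangle$, using $\det G = \mathrm{Vol}\{u_1,\dots,u_s\}^2$); I would present it geometrically since that is the more transparent route and explains the appearance of the volume ratio directly.
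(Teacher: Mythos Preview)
Your proof is correct. The paper does not actually prove this lemma; it is merely cited as Lemma~5.7 of \cite{gio} and used as a black box in the subsequent argument (the proof of Lemma~\ref{lemma.bd.1}). Your dual-basis argument, reducing the estimate to $\|u_j^*\|=1/d_j$ and then bounding $d_j$ via the base-times-height decomposition of the volume together with Hadamard's inequality, is the standard route and matches the proof in Giorgilli's notes.
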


\begin{proof}[Proof of Lemma \ref{lemma.bd.1}]
	Assume that $\cZ^{(d)}_{\Z^d}$ is not empty and take $a \in
	\cZ^{(d)}_{\Z^d}$. First, by Remark \ref{rmk.res.big}, one has
        $\|a\| \geq \tR/2$.        
	Furthermore there exists $\sigma \in \{0, 1\}$ and $\{{k}_1, \dots,
	{k}_{d}\} \subset \Z^d$ linear independent vectors such that $\forall
	j$
	\begin{equation}\label{all.small}
	\begin{gathered}
	%\norm{k_1} \leq \tD_1 \| a + \sigma k_1 \|^{\mu}\,,\\
	%\tR \leq \| a + \sigma k_1 \| \,,\\
	\norm{{k}_j}\leq \tD_{j} \| a + \sigma k_1 \| ^{\mu} \leq \tD_{d} \| a + \sigma k_1 \|^{\mu}\,,\\
	\begin{aligned}
	| \scala{ \omega(a + \sigma k_1)}{{k}_j}| &\leq \tC_{j} \| k_j\| \| a + \sigma k_1\|^{\mom -\gamma_{j}} \leq \tC_{d} \tD_{d} {\|a + \sigma k_1\|}^{\mom -\gamma_{d} + \mu}\,.
	\end{aligned}
	\end{gathered}
	\end{equation}
	Consider first the case $\sigma = 0$. By the second of
	\eqref{all.small}, using Lemma \ref{lemma.giorgilli} we deduce
	$$
	\| \omega(a)\| \leq d (\tD_d)^{d} \tC_d {\|a\|}^{\mom- \gamma_d  + \mu d}\,.
	$$ Then by Equation \eqref{om.hom.3}, one
	has
	\begin{align}\label{om.hom}
	K^{-1} \| a\|^{\mom} \leq \| \omega(a)\| \leq d (\tD_d)^{d} \tC_d {\|a\|}^{\mom-\gamma_{d} + d \mu}\,.
	\end{align}
	Recalling that $\displaystyle{\mom -\gamma_{d} + d \mu < \mom,}$
	\eqref{om.hom} implies $\| a\| \leq \tR_0$ for some positive $\tR_0$,
	but if $\tR/2 > \tR_0$ this is in contradiction with the fact that $\|a\| \geq \frac{\tR}{2}$.\\ If instead $\sigma = 1,$ in order to eliminate the
	presence of the vectors $k_1$ in estimates \eqref{all.small}, we apply
	Lemma \ref{incrementini} of the appendix, with $k = k_1$, $h = k_j$,
	$a = b$, $l = 0$, and then conclude the proof as in the case
        $\sigma=0$. 
\end{proof}

\subsection{Proof of Lemma \ref{lemma.diametri}} \label{sec.amj}
Items $1$ and $2$ of Lemma \ref{lemma.diametri} are easily verified.
Due to Remark \ref{rmk.M0}, Item $3$ also immediately follows in the
case $M = \{0\}$. We now tackle the case $M \neq \{0\}$; this is the
heart of the proof of Theorem \ref{brutto.ma.vero}, in particular it
is where steepness comes into play.

In the remaining part of the present subsection, \emph{we
	will restrict to the case where $M$ is a proper {nonzero} {pure subgroup of $\Z^d$}}, namely
$1\leq \operatorname{dim} M\leq d-1$.   
\begin{lemma} \label{lemma piccole proiez}
	If $a \in \cZ^{(s)}_M$, then there exists a positive constant $K$ depending only on $d, \mu, \gamma_{s}, \tC_{s}, \tD_{s},$ such that 
	\begin{equation}
	\label{proi.1}
	\|\Pi_M \omega(a)\| \leq K {\|a\|}^{\mom - \gamma_{s}+s\mu}\,.
	\end{equation}
\end{lemma}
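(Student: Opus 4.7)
The plan is to unpack Definition \ref{RZ} and apply Lemma \ref{lemma.giorgilli} to the vector $w := \Pi_M\omega(a+\sigma k_1)$. Indeed, if $a \in \cZ^{(s)}_M$, then by definition there exist $\sigma \in \{0,1\}$ and $s$ linearly independent vectors $k_1,\dots,k_s \in M$ such that $a + \sigma k_1 \in \Lambda$ and $a+\sigma k_1$ is resonant with $k_j$ at order $j$ for each $j$. Since $\{\gamma_j\}$ is decreasing and $\{\tC_j\}, \{\tD_j\}$ are increasing (cf.\ \eqref{parameters} and Remark \ref{rmk.b.posto}), the resonance conditions \eqref{res.k}--\eqref{res.om} give the \emph{uniform} bounds
\[
\|k_j\| \leq \tD_s\|a+\sigma k_1\|^{\mu}, \qquad |\omega(a+\sigma k_1)\cdot k_j| \leq \tC_s\tD_s\|a+\sigma k_1\|^{\mom-\gamma_s+\mu}
\]
for every $j=1,\dots,s$.

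Next I would observe that, since $k_1,\dots,k_s$ are $s$ linearly independent vectors in the $s$-dimensional module $M$, they span $\mathrm{Span}_\R M$. Hence $\Pi_M\omega(a+\sigma k_1) \in \mathrm{Span}\{k_1,\dots,k_s\}$ and $\langle \Pi_M\omega(a+\sigma k_1), k_j\rangle = \langle \omega(a+\sigma k_1), k_j\rangle$ for each $j$. Applying Lemma \ref{lemma.giorgilli} to $w=\Pi_M\omega(a+\sigma k_1)$ with $N = \tD_s\|a+\sigma k_1\|^{\mu}$ and $\alpha = \tC_s\tD_s\|a+\sigma k_1\|^{\mom-\gamma_s+\mu}$ then yields
\[
\|\Pi_M\omega(a+\sigma k_1)\| \;\leq\; \frac{s\,\tC_s\,(\tD_s)^{s}}{\mathrm{Vol}\{k_1|\cdots|k_s\}}\;\|a+\sigma k_1\|^{\mom-\gamma_s+s\mu}.
\]
A key point is that the denominator is bounded below by $1$: since $k_1,\dots,k_s\in\Z^d$ are linearly independent, the Gram determinant $\det(k_i\cdot k_j)$ is a positive integer, so $\mathrm{Vol}\{k_1|\cdots|k_s\}\geq 1$.

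In the case $\sigma=0$ this is exactly the claim with $K:=s\,\tC_s\,(\tD_s)^s$. For $\sigma=1$ one has to transfer the estimate from the point $a+k_1$ to $a$. I would use that $\|k_1\|\leq \tD_s\|a+k_1\|^{\mu}$ with $\mu$ small and $\|a+k_1\|\geq\tR$ large (by Lemma \ref{origina}), which gives $\|a\|\simeq \|a+k_1\|$; a first order Taylor expansion of $\omega$ at $a$, combined with the homogeneity bound \eqref{om.hom.3} (which also controls $\nabla\omega$), then yields that $\|\Pi_M\omega(a)-\Pi_M\omega(a+k_1)\|$ is of strictly lower order than $\|a\|^{\mom-\gamma_s+s\mu}$ in the range \eqref{legami.ep} of parameters. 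This is precisely the content of Lemma \ref{incrementini} in the Appendix, which I would invoke to conclude.

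The only mildly delicate step is the $\sigma=1$ bookkeeping: one must check that, after replacing $a+k_1$ by $a$, the exponent $\mom-\gamma_s+s\mu$ is preserved and the error incurred from Taylor expanding $\omega$ is indeed absorbed by the leading term. Everything else — spanning of $M$, integrality giving $\mathrm{Vol}\geq 1$, and the uniform reduction of the resonance constants to the worst case $j=s$ — is essentially algebraic and was the reason for choosing $\{\gamma_j\}, \{\tC_j\}, \{\tD_j\}$ monotonic in \eqref{parameters}.
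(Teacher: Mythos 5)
Your proof is correct and follows essentially the same route as the paper, which simply unpacks Definition \ref{RZ} and then says "the proof is completed arguing as in the proof of Lemma \ref{lemma.bd.1}, namely by combining Lemma \ref{lemma.giorgilli} and Lemma \ref{incrementini}." All the substantive points you identify (monotonicity of the parameters reducing to the worst case $j=s$, $\Pi_M\omega(a+\sigma k_1)\in\Span\{k_1,\dots,k_s\}$, the Gram determinant of integer vectors being $\geq 1$) are correct and necessary.

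One small imprecision in the $\sigma=1$ bookkeeping: the paper's route (in Lemma \ref{lemma.bd.1}, $\sigma=1$ case) invokes Lemma \ref{incrementini} with $b=a$, $l=0$ to transfer the \emph{scalar} resonance relations $|\omega(a+k_1)\cdot k_j|\lesssim\dots$ down to relations $|\omega(a)\cdot k_j|\lesssim\dots$ at $a$, and only afterwards applies Giorgilli's lemma at $a$. You instead apply Giorgilli at $a+k_1$ first and then transfer via a Taylor expansion bounding $\|\Pi_M\omega(a)-\Pi_M\omega(a+k_1)\|$. That Taylor argument is sound (the exponent $\mom-1+\mu$ is indeed strictly less than $\mom-\gamma_s+s\mu$ since $\gamma_s<\gamma_1<1$), but it is not literally "the content of Lemma \ref{incrementini}": that lemma's conclusion is a bound on $|\omega(b+l)\cdot h|$, not on a projection difference. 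Either ordering works; just note that citing Lemma \ref{incrementini} for your transfer step would require a short additional argument, whereas the paper's ordering uses the lemma exactly as stated.
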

{\begin{proof}
	One argues as in the proof of Lemma \ref{lemma.bd.1}.
	By Definition \ref{RZ} of $\cZ^{(s)}_M,$ if $a \in \cZ^{(s)}_M$ there exist $k_1, \dots, k_s$ and $\sigma \in \{0, 1\}$ s.t.  for all $j = 1, \dots, s$
	\begin{equation}\label{dove.siamo}
	|\omega(a + \sigma k_1) \cdot k_j| \leq \tC_s \|k_j\| \| a + \sigma k_1\|^{\tM - \gamma_s}\,, \quad \|k_j\| \leq D_s \|a + \sigma k_1\|^{\mu}\,.
	\end{equation}
	Then if $\sigma=0$, one observes that $\left|\left(\Pi_M \omega (a)\right) \cdot k_j\right| = | \omega(a) \cdot k_j|$ for all $j$, and applies Lemma \ref{lemma.giorgilli} with $w = \Pi_M \omega(a)$ and $u_j = k_j$ to deduce $\|\Pi_M \omega(a)\| \leq s (\tD_s)^{s-1} \tC_s \|a\|^{\tM - \gamma_s + \mu s}$. If instead $\sigma = 1$, one applies Lemma \ref{incrementini} with $k=k_1$, $h=k_j$ $a = b$, and $l=0$, to obtain estimates of the form \eqref{dove.siamo} involving only  $a$ instead of  $a + \sigma k_j$, and then applies Lemma \ref{lemma.giorgilli} as in the case $\sigma = 0$.
	\end{proof}
}
%\begin{proof} The proof, which is based on Lemma \ref{lemma.giorgilli} and Lemma
%	\ref{incrementini} is essentially identical to the proof of
%        Lemma 5.4 of \cite{BLMres} to which we refer for more
%        details.  
%\end{proof}
	In order to be able to use steepness, one has to ensure that, given two points belonging to the same equivalence class, there always exists a continuous curve joining them such that, for any point in its support, a property analogous to \eqref{proi.1} holds:
\begin{lemma}[Interpolation]\label{lemma.curva}
	{For any $s = 1, \dots, d-1$,} there exists a positive constant $C$ {depending only on $d, \mu, \gamma_{s}, \tC_{s}, \tD_{s},$} such that the following
	holds. For any $M,j$ and for any $a, b \in A^{(s)}_{M, j},$
	there exists a curve $\gamma:[0, 1] \rightarrow \left(\{a\} +
	\Span_{\R} M\right) \cap \cC$ such that
	\begin{equation}\label{punti.giusti}
	\gamma(0) = a\,, \quad \gamma(1)= b\,,
	\end{equation}
	and 
	\begin{equation}\label{pm.small}
	\|\Pi_{M} \omega(\gamma(t))\| \leq C \langle \gamma(t)\rangle^{\mom-\gamma_s + s \mu} \quad \forall t \in [0, 1]\,.
	\end{equation}
\end{lemma}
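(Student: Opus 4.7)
\emph{Proof plan.} The idea is to exploit the chain structure encoded in the equivalence relation $\sim$ on $A^{(s)}_{M,j}$ in order to build $\gamma$ as a piecewise affine path, and then to control $\Pi_M\omega$ along each linear piece by combining Lemma \ref{lemma piccole proiez} at the endpoint with a first order Taylor expansion using the homogeneity of $\omega$. More precisely, by Definitions \ref{pre}--\ref{equi} there exist $N\in\N$ and a chain
\[
a=a_1\sim' a_2\sim'\cdots\sim' a_N=b
\]
entirely contained in $A^{(s)}_{M,j}\subset\cZ^{(s)}_M$, so that each consecutive pair satisfies $a_{i+1}-a_i\in M$ and $\|a_{i+1}-a_i\|\leq\max\{\|a_i\|^\mu,\|a_{i+1}\|^\mu\}$. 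I would define $\gamma:[0,1]\to\R^d$ to be the continuous piecewise affine path whose restriction to the subinterval $\left[\frac{i-1}{N-1},\frac{i}{N-1}\right]$ joins $a_i$ to $a_{i+1}$ affinely.

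The two structural conditions in \eqref{punti.giusti} are straightforward. First, induction along the $\sim'$-chain shows $a_i-a\in M$ for all $i$, so $\gamma(t)\in\{a\}+\Span_{\R}M$. Second, every $a_i$ lies in $\Lambda\subset\cC$ by Assumption A.iii, and convexity of $\cC$ forces each segment $[a_i,a_{i+1}]$, and hence the whole image of $\gamma$, to lie in $\cC$.

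The heart of the argument is the pointwise bound \eqref{pm.small}. Fix $t$ in the $i$-th subinterval and put $x:=\gamma(t)$. Since $\|x-a_i\|\leq\|a_{i+1}-a_i\|\lesssim\|a_i\|^\mu$ with $\mu<1$, for $\tR$ sufficiently large (which is allowed by the normal form construction) one has $\|y\|\simeq\|a_i\|\simeq\langle x\rangle$ uniformly for $y\in[a_i,x]$. By Assumption H.i, $\omega=\nabla h_0$ is homogeneous at infinity of degree $\mom=\td-1$, so its differential satisfies $\|d\omega(y)\|\lesssim\|y\|^{\mom-1}\lesssim\|a_i\|^{\mom-1}$ there. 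Taylor expanding
\[
\Pi_M\omega(x)=\Pi_M\omega(a_i)+\int_0^1\Pi_M\,d\omega\bigl(a_i+\tau(x-a_i)\bigr)(x-a_i)\,d\tau,
\]
applying Lemma \ref{lemma piccole proiez} to the first summand and the above estimate to the integral, one obtains
\[
\|\Pi_M\omega(x)\|\lesssim\|a_i\|^{\mom-\gamma_s+s\mu}+\|a_i\|^{\mom-1+\mu}.
\]

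The main obstacle is then purely arithmetic: one has to absorb the linear-expansion remainder of size $\|a_i\|^{\mom-1+\mu}$ into the Lemma \ref{lemma piccole proiez} bound $\|a_i\|^{\mom-\gamma_s+s\mu}$, which reduces to $\gamma_s\leq 1+(s-1)\mu$. I would verify this by noting that $\gamma_1=\mom-\delta<1$, which follows from the constraint $2\rho+\delta-\td>0$ appearing in \eqref{sigma.1} together with $\rho\leq 1$, and that $\gamma_s\leq\gamma_1$ for every $s\geq 1$ from the recursive definition \eqref{parameters} with $\alpha_s\geq 1$. Combined with $\langle x\rangle\simeq\|a_i\|$ on each segment, this yields \eqref{pm.small} with a constant $C$ independent of the chain length $N$ and depending only on $d,\mu,\gamma_s,\tC_s,\tD_s$, as claimed.
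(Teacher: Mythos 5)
Your proposal is correct and follows essentially the same route as the paper: a $\sim'$-chain gives a piecewise affine path, on each segment $\Pi_M\omega$ is controlled by Lemma \ref{lemma piccole proiez} at the endpoint plus a first-order Taylor estimate exploiting the $(\mom-1)$-homogeneity of $\partial\omega/\partial a$ and the norm comparability $\|\gamma(t)\|\simeq\|a_i\|$ from Remark \ref{rmk.a.or.b}. The paper treats the single-segment case first and then concatenates, while you build the piecewise path in one step, but the substance is identical; your explicit check that $\gamma_s\le\gamma_1=\mom-\delta<1\le 1+(s-1)\mu$ (which makes the remainder $\|a_i\|^{\mom-1+\mu}$ absorbable into $\|a_i\|^{\mom-\gamma_s+s\mu}$) is left implicit in the paper, though $\mom-\delta<1$ comes most directly from the standing assumption $\delta>\mom-1$ below \eqref{M} rather than from \eqref{sigma.1}.
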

\begin{proof} 
  Suppose first that $a$ and $b$ are such that $a\sim'b$, and
define $\gamma$ by $\gamma(t) =
	a + t(b-a)\subset\cC$. Furthermore,  by Remark
	\ref{rmk.a.or.b} there exists a positive constant $C$ such
	that
	\begin{equation}\label{le.i.stess}
	C^{-1} { \| a\| <\| a + t (b-a)\| \leq  C \|a\| \quad \forall t \in [0,\,1]\,,}
	\end{equation}
	so that (by the $(\mom-1)$-homogeneity of
	$\partial\omega/\partial a$):
	$$
	\begin{aligned}
	\| \Pi_{M} \omega(a + t(b-a))\| &\leq \| \Pi_{M} \omega(a)\| + \left\| \int_{0}^{1} \frac{\partial \omega}{\partial a}(a +t_1 t (b-a)) t(b-a)\,d t_1\right\|\\
	& \lesssim_{} {\|a\|}^{\mom-\gamma_s + s \mu} + {\|a\|}^{\mom-1 + \mu} \lesssim {\|a\|}^{\mom-\gamma_s + s \mu}\,,
	\end{aligned}
	$$
	where we used Lemma \ref{lemma piccole
		proiez}. Then, using again \eqref{le.i.stess},
	one also obtains
	$$
	\| \Pi_{M} \omega(a + t(b-a))\| \lesssim {\| a + t(b-a)\|}^{\mom- \gamma_s + s \mu} \quad \forall t\,.
	$$
	%	Thus also in such a case there exists a curve $\gamma$ connecting $a$ and $b$ and satisfying properties \eqref{punti.giusti}, \eqref{pm.small}.\\
	The general case $a \sim b$ follows by exploiting the
        previous result.
\end{proof}
Roughly speaking, the idea in order to prove Item 3 of Lemma
\ref{lemma.diametri} is that one would like to consider the curve
joining $a$ and $b$ and to exploit steepness in
order to deduce that, if by contradiction Item 3 of Lemma
\ref{lemma.diametri} does not hold true, then \eqref{pm.small} is violated. This is
obtained essentially as in the classical case. To this end, we first
need a few technical preparation Lemmas.

\begin{definition}\label{m.omega}
	For any $a \in \cZ^{(s)}_M$, define $\omega^\bot(a)$ as the $d-1$ dimensional subspace of $\R^d$ orthogonal to $\omega(a)$, and
	\begin{equation}
	\Momega := \Pi_{\omega^\bot(a)} \Span_{\R} M\,.
	\end{equation}
\end{definition}

The following lemma ensures that in an appropriate sense $M_a$ is
close to $M$. 

\begin{lemma}\label{lemma.pm.pmom}
	Let $M$ be a {pure subgroup} of dimension $s$, $1 \leq s<d$. There exists positive constants $C$ and $\tR_0$, depending only on $\gamma_s$, $\mu$, $\tC_s$, $\tD_s$, such that, if $\tR >\tR_0$, then for any $a \in \cZ^{(s)}_M$,
	\begin{equation}
	\label{6.27.1}
	\| \Pi_{\Momega} - \Pi_{M} \| \leq C {\|a\|}^{-\gamma_s  + s\mu}\,. 
	\end{equation}
\end{lemma}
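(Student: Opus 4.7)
The plan is to exploit that $\omega(a)$ is nearly orthogonal to $\Span_\R M$, since by Lemma \ref{lemma piccole proiez} $\|\Pi_M\omega(a)\|$ is small relative to $\|\omega(a)\|$, so the orthogonal projection $\Pi_{\omega^\perp(a)}$ moves $\Span_\R M$ only slightly. The small parameter controlling everything is
\[
\varep_a := \|a\|^{-\gamma_s + s\mu}\,,
\]
which is indeed small for $\|a\| \geq \tR/2$ large (recall Remark \ref{res.palla}), since by the choice of parameters in \eqref{parameters} one has $\gamma_s - s\mu = \alpha_s\,\gamma_{s+1} > 0$, and in particular $-\gamma_s + s\mu < 0$.

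First I would fix an orthonormal basis $\{e_1,\dots,e_s\}$ of $\Span_\R M$ and write
\[
\Pi_{\omega^\perp(a)} e_j \;=\; e_j \;-\; \frac{\langle e_j,\omega(a)\rangle}{\|\omega(a)\|^2}\,\omega(a)\,.
\]
Since $e_j\in\Span_\R M$, one has $\langle e_j,\omega(a)\rangle = \langle e_j,\Pi_M\omega(a)\rangle$, so Lemma \ref{lemma piccole proiez} combined with Cauchy--Schwarz gives $|\langle e_j,\omega(a)\rangle|\leq K\|a\|^{\mom-\gamma_s+s\mu}$. Together with the homogeneity lower bound $\|\omega(a)\|\geq C^{-1}\|a\|^{\mom}$ of Remark \ref{rmk.om.hom}, this yields
\[
\bigl\|\Pi_{\omega^\perp(a)} e_j - e_j\bigr\| \;=\; \frac{|\langle e_j,\omega(a)\rangle|}{\|\omega(a)\|} \;\leq\; C'\,\varep_a\,,
\]
with $C'$ depending only on $\gamma_s,\mu,\tC_s,\tD_s$ (and on the fixed homogeneity constant of $\omega$).

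The final step is a standard perturbation argument for projectors onto close subspaces. For $\tR_0$ large enough, $\varep_a$ is as small as one wishes; hence the vectors $f_j:=\Pi_{\omega^\perp(a)} e_j$ have Gram matrix $I+O(\varep_a)$, are therefore linearly independent, and $\Momega=\Span\{f_1,\dots,f_s\}$ has dimension $s$. Writing $\Pi_{\Momega} = F(F^{\top}F)^{-1}F^{\top}$ with $F$ the matrix whose columns are the $f_j$'s, and $\Pi_M = E E^{\top}$ with $E$ the matrix of the $e_j$'s, one has $F = E + V$ with $\|V\|\leq C'\varep_a$ and $(F^{\top}F)^{-1} = I + O(\varep_a)$; expanding the product and collecting terms yields $\|\Pi_{\Momega}-\Pi_M\|\leq C\varep_a$, which is the desired bound. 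The only subtle point is the transversality of $\omega(a)$ to $\Span_\R M$ needed for $\Pi_{\omega^\perp(a)}$ to be injective on $\Span_\R M$, but this is automatic from the computation above: the relative error $\|\Pi_M\omega(a)\|/\|\omega(a)\|\lesssim\varep_a$ is exactly what forces $\Pi_{\omega^\perp(a)}$ to be a small perturbation of the identity on $\Span_\R M$.
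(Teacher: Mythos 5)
Your proof is correct and follows essentially the same route as the paper: it combines Lemma \ref{lemma piccole proiez} (smallness of $\|\Pi_M\omega(a)\|$) with the homogeneity lower bound $\|\omega(a)\|\gtrsim\|a\|^\mom$ of Remark \ref{rmk.om.hom}, then converts the near-orthogonality of $\omega(a)$ to $\Span_\R M$ into a bound on $\|\Pi_{\Momega}-\Pi_M\|$. The only divergence is that the paper invokes the abstract Appendix Lemma \ref{lemma.proiettori} for the projector-perturbation step (which delivers the explicit constant $9\varepsilon\|\omega_*\|^{-1}$ under $\varepsilon\leq\tfrac12\|\omega_*\|$), whereas you re-derive an equivalent estimate inline via the Gram-matrix formula $\Pi_{\Momega}=F(F^{\top}F)^{-1}F^{\top}$ and a Neumann-series expansion; both are routine linear algebra and give the same power of $\|a\|$.
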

\begin{proof}
	By Lemma \ref{lemma piccole proiez}, there exists a positive
        constant $C_1$, depending only on $\tC_s, \tD_s, \gamma_s, \mu$,
        such that for any $a \in \cZ^{(s)}_M$ we have 
	$$
	\| \Pi_{M} \omega(a)\| \leq C_1 {\|a\|}^{\mom-\gamma_s + s \mu}\,.
	$$
	Then Lemma \ref{lemma.proiettori} of the appendix and Equation
        \eqref{om.hom.3} give
	\begin{equation}\label{caffe}
	\| \Pi_{\Momega} - \Pi_M \| \leq 9 C_1 {\|a\|}^{\mom -
		\gamma_s + s \mu} \| \omega(a)\|^{-1}\leq 9 C C_1 {\|a\|}^{- \gamma_s + s \mu}
	\,
	\end{equation}
(with $C$ the constant in eq. \eqref{om.hom.3}),	provided $	\varepsilon:=  C_1 {\|a\|}^{\mom-\gamma_s + s \mu} \leq \frac{1}{2} \|\omega(a)\|$,
	but this inequality holds provided $\tR$ is large enough.
\end{proof}

\begin{lemma}\label{lem.piem.om.above}
	Given a {pure subgroup} $M$ of dimension $s$, let $a \in \cZ^{(s)}_M$
	and suppose {$ u \in \Span_{\R} M$ and $a + u \in
	\cC_e$} is
	such that $\exists C,\tau$ s.t.
	\begin{gather}
	\label{is.close}
	\| \Pi_M \omega(a + u)\| \leq C {\|a + u\|}^{\mom - \gamma_s +
          s \mu}\,,\quad 
	\| u\| \leq C {\|a\|}^{1-\tau}\,.
	\end{gather}
	Assume that 
	\begin{equation}
	\label{6.35}
	%a + \Pi_{\Momega}\left(a+u\right) \in \cC_e\ ,
	{a + \Pi_{\Momega}u \in \cC_e\ ,}
	\end{equation}
	then there exists positive constants $C^+$ and $\tR_0,$
        {both} depending only on $\gamma_s, \mu, \tC_s, \tD_s, C$ and $\tau,$ such that if $\tR > \tR_0$ one has
	\begin{equation}\label{piem.small}
	\left\| \Pi_{\Momega} \omega \left( a + \Pi_{\Momega} u \right) \right\| \leq C^+ {\|a\|}^{\mom - \gamma_s + s\mu}\,.
	\end{equation}
\end{lemma}
\begin{proof}
	One has
	\begin{align}
	\label{spostamenti.1}
	\Pi_{\Momega} \omega(a + \Pi_{\Momega} u) &=
	\Pi_{M} \omega( a +  u) + \left(\Pi_{\Momega} - \Pi_M\right) \omega( a +
	\Pi_{\Momega} u) \\
	\label{spostamenti.3}
	& + \Pi_{M} \left(\omega( a + \Pi_{\Momega}  u) - \omega( a +  u)\right)\,.
	\end{align}
  By
	\eqref{is.close}, one %\green{ and the homogeneity of $\omega$}
	has
	\begin{align}
	\label{primo.pezzo}
	\|\Pi_{M} \omega( a +  u)\| &\lesssim \langle a + u \rangle^{\mom-\gamma_s+s\mu }
	\\
	&\lesssim {\|a\|}^{\mom-\gamma_s+s\mu } + {\|u\|}^{\mom-\gamma_s+s\mu } \lesssim{\|a\|}^{\mom-\gamma_s+s\mu }\,,   
	\end{align}
	since $1-\tau< 1$.
	
The second term at right-hand side of Eq.\ \eqref{spostamenti.1} is estimated using again homogeneity and
	Eq.\ \eqref{6.27.1}. 	
	We come to 
	\eqref{spostamenti.3}. Recalling that $u \in \Span_{\R} M$, one has
	\begin{multline}
	\label{om.tayl}
	\|\Pi_{M} \left(\omega( a + \Pi_{\Momega} u) - \omega( a +  u)\right)\|  = \|\Pi_{M} \left(\omega( a + \Pi_{\Momega} u) - \omega( a + \Pi_M  u)\right)\|\\
	\leq \left\| \int_{0}^{1} \frac{\partial \omega}{\partial
		a}\left( a +
	 t (\Pi_{\Momega} - \Pi_M) u)\right)\,d t \right\| \left\| \left(\Pi_{\Momega} - \Pi_M\right) u\right\|\,.
	\end{multline}
	Using again Lemma \ref{lemma.pm.pmom}, one concludes the proof.
\end{proof}

We use now steepness in order to prove the following lemma, which concludes the proof of Lemma \ref{lemma.diametri}.

\begin{lemma}\label{lemma.ci.siamo.quasi}
	There exist positive constants $\overline{C}$ and $\tR_0$,
        depending on $\gamma_s, \mu, \tC_s, \tD_s, \tr$ only, such that if $\tR \geq \tR_0,$ $\forall a \in A^{(s)}_{M, j}$ one has
	\begin{equation}\label{onde}
	\| a-b\| \leq \overline{C} {\|a\|}^{1 - \gamma_{s+1}} \quad \forall b \in A^{(s)}_{M, j}\,.
	\end{equation}
\end{lemma}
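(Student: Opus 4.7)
The plan is to argue by contradiction: I assume that there exist $a, b \in A^{(s)}_{M,j}$ with $\|b-a\| > \overline{C}\|a\|^{1-\gamma_{s+1}}$, where $\overline{C}$ will be fixed at the end, and use the steepness of $h_0$ to derive a contradiction, in the spirit of the classical Nekhoroshev geometric construction. Set $\xi := \tfrac{\overline{C}}{2}\|a\|^{1-\gamma_{s+1}}$, so that $\|b-a\| > 2\xi$, and note that $\xi \leq \tr\|a\|$ once $\tR$ is large, since $\gamma_{s+1} > 0$. By Lemma \ref{lemma.curva} choose a continuous curve $\gamma: [0,1] \to (\{a\} + \Span_\R M)\cap\cC$ joining $a$ to $b$ along which $\|\Pi_M\omega(\gamma(t))\| \lesssim \|a\|^{\mom-\gamma_s+s\mu}$, and let $t_1 \in (0,1]$ be the first time with $\|\gamma(t_1)-a\| = 2\xi$, which exists by continuity.

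For the upper bound I project the curve onto $\Momega$ by setting $\tilde\gamma(t) := \Pi_{\Momega}(\gamma(t)-a) \in \Momega$. For $t \in [0, t_1]$ the vector $u := \gamma(t)-a \in \Span_\R M$ satisfies $\|u\| \leq 2\xi = \overline{C}\|a\|^{1-\gamma_{s+1}}$, so Lemma \ref{lem.piem.om.above} applies with $\tau = \gamma_{s+1}$ (the inclusion \eqref{6.35} is guaranteed by Lemma \ref{lemma.pm.pmom} combined with the openness of $\cC_e$ and the fact that $\cC_e\supset \ov{\cC}\setminus\{0\}$) and yields
\[
\|\Pi_{\Momega}\omega(a + \tilde\gamma(t))\| \leq C^+ \|a\|^{\mom-\gamma_s+s\mu}, \qquad t \in [0, t_1],
\]
with $C^+$ essentially independent of $\overline{C}$ for $\tR$ large, since the $\overline{C}$-dependence entering the bound via Lemma \ref{lem.piem.om.above} is of relative size $\|a\|^{-\gamma_{s+1}}$. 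For the lower bound I apply the scaled steepness estimate \eqref{steep.r} at $a$ with the $s$-dimensional subspace $\Momega \subset \omega(a)^\perp$, scale $r = \|a\|$, and the above $\xi$: this provides $\eta^* \in [0, \xi]$ such that
\[
\|\Pi_{\Momega}\omega(a + \eta^* u)\| \geq \tB_s \|a\|^{\mom-\alpha_s}\xi^{\alpha_s} = 2^{-\alpha_s}\tB_s\overline{C}^{\alpha_s}\|a\|^{\mom-\gamma_s+s\mu}
\]
for every unit vector $u \in \Momega$, where I used the identity $\alpha_s(1-\gamma_{s+1}) = \alpha_s-\gamma_s+s\mu$ encoded in \eqref{parameters}.

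To conclude I match the arbitrary unit direction appearing in the steepness lower bound with a direction actually realized on the projected curve. By Lemma \ref{lemma.pm.pmom} applied to $\gamma(t_1)-a \in \Span_\R M$ one has $\|\tilde\gamma(t_1)\| \geq 2\xi(1 - C\|a\|^{-\gamma_s+s\mu}) > \xi \geq \eta^*$ for $\tR$ large, while $\|\tilde\gamma(0)\| = 0$; since $\tilde\gamma$ is continuous, the intermediate value theorem applied to the scalar function $t \mapsto \|\tilde\gamma(t)\|$ produces $t^* \in [0, t_1]$ with $\|\tilde\gamma(t^*)\| = \eta^*$ exactly. Setting $u^* := \tilde\gamma(t^*)/\eta^* \in \Momega$ (a unit vector), the upper bound at $t=t^*$ and the lower bound in the direction $u^*$ refer to the very same point $a + \eta^* u^* = a + \tilde\gamma(t^*)$, and therefore force $2^{-\alpha_s}\tB_s\overline{C}^{\alpha_s} \leq C^+$; choosing $\overline{C}$ large enough with $2^{-\alpha_s}\tB_s\overline{C}^{\alpha_s} > C^+$ produces the desired contradiction.

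The main delicate point — and what makes the abstract steep case work — is precisely this matching step: the continuity of $\tilde\gamma$ allows an \emph{exact} intersection with any prescribed radius $\eta^* \leq \xi$ in $\Momega$, so that the $\max_\eta\min_u$ quantifier structure of steepness can be exploited directly, without the need for a Taylor-remainder comparison of $\omega$ at nearby points — such a remainder would have size $\|a\|^{\mom-1+\mu}$ and would compete with the leading scale $\|a\|^{\mom-\gamma_s+s\mu}$ of the two bounds.
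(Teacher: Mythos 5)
Your proof is correct and follows essentially the same strategy as the paper: construct the interpolating curve via Lemma~\ref{lemma.curva}, bound $\Pi_{\Momega}\omega$ from above via Lemma~\ref{lem.piem.om.above}, bound it from below via the scaled steepness estimate~\eqref{steep.r} together with the identity $\alpha_s(1-\gamma_{s+1}) = \alpha_s - \gamma_s + s\mu$, and choose $\overline{C}$ large to force a contradiction. The only cosmetic differences lie in the matching step (you extract the steepness maximizer $\eta^*$ first and then use the intermediate value theorem on $t\mapsto\|\tilde\gamma(t)\|$ to land on it exactly, whereas the paper maximizes the projected gradient norm along the curve and compares with the max-min) and in handling the $\overline{C}$-dependence of $C^+$ (the paper absorbs $\overline{C}$ into the exponent as in \eqref{costante.1}, you argue directly it is subdominant for $\tR$ large); both are sound, though your closing remark that a Taylor remainder of size $\|a\|^{\mom-1+\mu}$ would ``compete'' with $\|a\|^{\mom-\gamma_s+s\mu}$ is inaccurate, since $\gamma_s < 1$ makes $\mom-1+\mu < \mom-\gamma_s+s\mu$ and such a remainder is in fact subdominant (indeed Lemma~\ref{lem.piem.om.above} does use such a Taylor comparison in~\eqref{om.tayl}).
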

\begin{proof}
{	We prove that there exist {large} constants $\overline R$ and
	$\overline{C}>0$, depending on $\tr, \gamma_s, \mu_s, \tD_s, \tC_s$
	only, such that if there exists a couple of points $a, b \in
	A^{(s)}_{M, j}$ satisfying $\| a\| \geq \overline R$ and
	\begin{equation}\label{far.far.away}
	{\| a - b\| \geq \overline{C} {\|a\|}^{1-\frac{\left(\gamma_s - s \mu\right)}{\alpha_s}},}
	\end{equation}
	one gets a contradiction. Then the result will follow taking
	$\tR \geq 2 \overline R$ and using Remark
	\ref{rmk.res.big}.}

Let us fix $a$ and $b$ in some $A^{(s)}_{M,
  j}$ and suppose that \eqref{far.far.away} holds.
Consider
	the curve $\gamma_{a, b}$ joining $a$ and $b$ constructed in
	Lemma \ref{lemma.curva}; then there exists $t^*>0$ such that
	\begin{align}
	\label{tempo.di.fuga}
	\| \gamma_{a, b}(t^*) - a\| = \overline{C} {\|a\|}^{1-\frac{\left(\gamma_s - s \mu\right)}{\alpha_s}}\,,\\
	\label{tempo.di.fuga.1}	\| \gamma_{a, b}(t) - a\| < \overline{C}{\|a\|}^{1-\frac{\left(\gamma_s - s \mu\right)}{\alpha_s}} \quad \forall t \in [0, t^*)\,.
	\end{align}
        By {construction of $\gamma_{a,b}$ one has}
        $$
	\gamma_{a,b}(t){=}a+\Pi_M(\gamma_{a,b}(t)-a) \in \cC \quad \forall t \in [0, t^*)\,.
          $$

Denote $u(t):=\Pi_{M_a}(\gamma_{a,b}(t)-a)$ and note that, by Lemma \ref{lemma.pm.pmom} and \eqref{tempo.di.fuga.1}, if $\|a\|$ is large enough we have
        \begin{equation}\label{vengo.anch.io.0}
        \|u(t)\| \leq 2 \overline{C} {\|a\|}^{1-\frac{\left(\gamma_s - s \mu\right)}{\alpha_s}}\,.
        \end{equation}           
        {This, taking $\|a\|$ large enough and using that $1-\frac{\left(\gamma_s - s \mu\right)}{\alpha_s}<1$, implies that $\tilde u:= \gamma_{ab}(t) -a$ satisfies $a + \Pi_{M_a} \tilde u = a+ u(t) \in \cC_e$.  Therefore,
        from \eqref{pm.small} and applying Lemma \ref{lem.piem.om.above} with $u$ replaced by $\tilde u$,}
        $\exists C^+$, depending only on $\gamma_s, \mu, \tC_s,
        \tD_s$, such that
	\begin{equation}
	\label{sup}
	\left\|\Pi_{\Momega}\omega(a+\Pi_{\Momega}(\gamma_{ab}(t)-a)
	)\right\| \leq C^+ {\|a\|}^{\mom-\gamma_s+s\mu}\ .
	\end{equation}

	We are now going to use steepness in the form
        \eqref{steep.r}. 
        Note that, by Lemma \ref{lemma.pm.pmom} and
        \eqref{tempo.di.fuga}, if $\|a\|$ is large enough {we also have
        \begin{equation}\label{vengo.anch.io}
        \frac{\overline{C}}{2}  {\|a\|}^{1-\frac{\left(\gamma_s - s \mu\right)}{\alpha_s}} \leq \|u(t^*)\| \leq 2 \overline{C} {\|a\|}^{1-\frac{\left(\gamma_s - s \mu\right)}{\alpha_s}}\,.
        \end{equation}}

        Take then
	$r=\left\| a \right\|$ and $\xi:= \| u(t^*)\|$ and, for $\eta \in [0, \csi]$
	let $t_\eta$ be the smallest time in $[0, t^*]$ such that $\|
	u(t_\eta)\| = \eta\,$; by \eqref{vengo.anch.io} one has $\xi< \tr
	\left\| a\right\|$ (with $\tr$ the quantity
	in Eq.\
	\eqref{steep}).
	Let $\bar t$ be the point realizing the maximum on $[0, \xi]$ of the
	quantity $\displaystyle{\|\Pi_{\Momega}\omega({a} +
		u(t_\eta))\|}$, then steepness ensures that
	\begin{equation}\label{is.steep}
	\begin{aligned}
	\|\Pi_{\Momega}\omega(\gamma_{ab}(\bar t))\|=\|\Pi_{\Momega}\omega({a} + u(\bar t))\| = \max_{\eta \in
		[0, \xi]}\,\left\|\Pi_{\Momega}\omega({a} +
	u(t_\eta))\right\|
	\\ \geq \tB_s \left\| a\right\|^{\mom-\alpha_s}   \xi^{\alpha_s} \geq
	\tB_s (2^{-1}\overline{C})^{\alpha_{s}} {\|a\|}^{\mom-\gamma_s + s\mu}\,.
	\end{aligned}
	\end{equation}
	But, {taking $\overline{C}$ large enough,} this contradicts Eq.\ \eqref{sup}, and so the conclusion follows.
\end{proof}

\subsection{Proof of Theorem \ref{teo.invarianza}}\label{6.17}

This subsection follows very closely the proof given in Subsection 5.1
of \cite{BLMres} for the convex case. We prove in detail only the
lemmas with a new proof and we make reference to \cite{BLMres} for the
others.

The next two lemmas ensure that, if the parameters $\tC_{j}, \tD_j$ are
suitably chosen, an extended block $E^{(s)}_{M, j}$ is separated from
every resonant zone associated to a lower dimensional {subgroup}
$M^\prime\,$ which is not contained in $M.$  The mechanism is that a point in $E_{M,j}^{(s)}$ which is close to the resonant zone associated to a different {subgroup} $M'$ would gain one more resonance relation, and this would create a contradiction with Definition \ref{def blocks}.
\begin{lemma}[Nonoverlapping of resonances] \label{lemma che bei blocchi}
	For all $s = 1, \dots d-1$ there exist positive constants $\bar \tR$, $\bar{\tC}_{s+1}$ and $\bar{\tD}_{s+1}$, depending only on $d, \tC_{s}, \tD_{s}, \mu, \gamma_{s}\,,$
	such that the following holds: suppose that $M$ and $M^\prime$
	are two distinct {pure subgroups} of respective  dimensions  $s$ and $s^\prime$ with $s^\prime \leq s$ and  $M^\prime \nsubseteq M.$ If
	$$
	\tC_{s+1} > \bar{\tC}_{s+1}\,,\quad  \tD_{s+1} >\bar{\tD}_{s+1}\,, \quad \tR > \bar{\tR}\,,
	$$
	then
	$$
	E^{(s)}_{M, j} \cap \cZ^{(s^\prime)}_{M^\prime} = \emptyset \quad \forall j \in \cJ_M\,.
	$$
\end{lemma}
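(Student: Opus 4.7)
The plan is to argue by contradiction. I assume there exists $a \in E^{(s)}_{M, j} \cap \cZ^{(s')}_{M'}$ and construct a module $\tilde M \supset M$ of dimension $s+1$ such that the representative $b \in B^{(s)}_{M,j}$ of $a$, coming from the decomposition $E^{(s)}_{M,j} \subseteq B^{(s)}_{M,j} + M$, actually lies in $\cZ^{(s+1)}_{\tilde M}$; this contradicts the defining property of $B^{(s)}_{M,j}$ which excludes all $\cZ^{(s+1)}_{M''}$ with $\dim M'' = s+1$.

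First I would extract the resonance data. From $a \in A^{(s)}_{M, j} \subseteq \cZ^{(s)}_M$ there exist linearly independent $k_1, \dots, k_s \in M$ and $\sigma \in \{0,1\}$ with $a + \sigma k_1 \in \Lambda$ and $a + \sigma k_1$ resonant with $k_j$ at order $j$ (hence with constants bounded by $\tC_s, \tD_s$ and exponent $\gamma_s$) for each $j$. Similarly $a \in \cZ^{(s')}_{M'}$ produces linearly independent $k'_1, \dots, k'_{s'} \in M'$ and $\sigma' \in \{0,1\}$ with analogous resonances, whose constants are still bounded by $\tC_s, \tD_s$ because $s' \leq s$. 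As $M' \not\subseteq M$, one of the $k'_i$'s, say $k'$, is linearly independent from $\{k_1, \dots, k_s\}$, so $\tilde M := \Span_\R\{k_1, \dots, k_s, k'\} \cap \Z^d$ is a module of dimension $s+1$ containing $M$.

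The second step is to unify all the resonances at the common base point $a$. The available resonances live at $a + \sigma k_1$ and $a + \sigma' k'_1$, which differ from $a$ by vectors of norm at most $\tD_s \|a\|^\mu$. A Taylor expansion of $\omega$ together with its homogeneity at infinity yields $\omega(a + \sigma k_1) \cdot k = \omega(a) \cdot k + O\!\left(\|k\| \|a\|^{\mom - 1 + \mu}\right)$, and since $\gamma_{s+1} < 1 - \mu$ (which follows from $\delta > \mom - 1$ together with the smallness of $\mu$ guaranteed by \eqref{legami.ep}), this correction is absorbed into the $\|k\| \|a\|^{\mom - \gamma_{s+1}}$ bound required at order $s+1$; this is essentially the content of Lemma \ref{incrementini}. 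Combined with the trivial inequality $\|a\|^{\mom - \gamma_s} \leq \|a\|^{\mom - \gamma_{s+1}}$ for $\|a\| \geq 1$, I obtain explicit thresholds $\bar\tC_{s+1}, \bar\tD_{s+1}$ depending only on $d, \mu, \gamma_s, \tC_s, \tD_s$ such that, provided $\tC_{s+1} > \bar\tC_{s+1}$, $\tD_{s+1} > \bar\tD_{s+1}$, and $\tR$ is taken large enough to absorb subleading terms, $a$ is simultaneously resonant at order $s+1$ with each of the $s+1$ linearly independent vectors $k_1, \dots, k_s, k'$. Thus $a \in \cZ^{(s+1)}_{\tilde M}$.

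Finally I would transfer this to $b$. Writing $a = b + m$ with $b \in B^{(s)}_{M,j}$ and $m \in M \subseteq \tilde M$, Lemma \ref{lemma.diametri}(3) gives $\|m\| \lesssim \|a\|^{1 - \gamma_{s+1}}$. A further application of the shift argument (this time with shift $m \in \tilde M$, so that the resonant vectors remain in $\tilde M$) transports the $s+1$ resonance relations from $a$ to $b$ with constants further dilated by an amount again depending only on $d, \mu, \gamma_s, \tC_s, \tD_s$. Enlarging $\bar\tC_{s+1}, \bar\tD_{s+1}$ to absorb this last dilation gives $b \in \cZ^{(s+1)}_{\tilde M}$, the desired contradiction. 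The main technical difficulty I anticipate is the uniform bookkeeping of constants through these two layers of shifts (from $a + \sigma k_1$ to $a$, and from $a$ to $b$), ensuring that the accumulated corrections remain bounded by quantities depending only on the fixed parameters $d, \mu, \gamma_s, \tC_s, \tD_s$; this is precisely what forces the strict hierarchy $1 = \tC_1 < \tC_2 < \cdots < \tC_d$ and $1 = \tD_1 < \cdots < \tD_d$ of \eqref{parameters} together with the large-$\tR$ condition in the statement.
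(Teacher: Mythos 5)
Your strategy is essentially the one in the paper: assume by contradiction that $a \in E^{(s)}_{M,j} \cap \cZ^{(s')}_{M'}$, pick the vector $k'$ (the paper's $k_{\bar\j}$) in $M'\setminus M$, invoke Lemma \ref{incrementini} to transport resonances to the representative $b\in B^{(s)}_{M,j}$, and conclude $b\in\cZ^{(s+1)}_{M''}$ with $M''=\Span_{\Z}(M,k')$, contradicting the definition of $B^{(s)}_{M,j}$. So there is no genuinely different route here, but there is a nontrivial gap in the bookkeeping which the paper resolves more cleanly.

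The gap is in the final step, where you want to conclude $b\in\cZ^{(s+1)}_{\tilde M}$. By Definition \ref{RZ}, membership in a resonant zone requires exhibiting linearly independent $h_1,\dots,h_{s+1}\in\tilde M$ and $\sigma\in\{0,1\}$ such that \emph{both} $b+h_1\in\Lambda$ \emph{and} $b+\sigma h_1$ is resonant with $h_j$ at order $j$ for all $j$. Your plan unifies all $s+1$ resonances at the base point $a$ and then shifts to $b$, so after the shift the natural base point is $b$ itself (i.e.\ $\sigma=0$). But then you would need $b+h_1\in\Lambda$ for whichever vector you label as $h_1$ (say $k_1$, drawn from $a$'s resonance data). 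You only know $a+k_1\in\Lambda$; since $\Lambda$ is the joint spectrum intersected with the cone $\cC$ and not a full coset of $\Z^d$, there is no reason $b+k_1=a+k_1-m$ lies in $\Lambda$ even though $m\in\Z^d$. Your two-stage transfer (first $a+\sigma k_1\to a$ and $a+\sigma' k'_1\to a$, then $a\to b$) therefore does not, as written, produce a legitimate element of some $\cZ^{\sigma}_{h_1,\dots,h_{s+1}}$.

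The paper avoids both the detour and the gap by observing that $b\in B^{(s)}_{M,j}\subset\cZ^{(s)}_M$ already carries its own resonance data: there are $\sigma_b$ and $l_1,\dots,l_s\in M$ with $b+l_1\in\Lambda$ and $b+\sigma_b l_1$ resonant with each $l_j$ at order $j$. It then makes a \emph{single} application of Lemma \ref{incrementini}, with $k=\sigma_a k_1$, $l=\sigma_b l_1$, $h=k_{\bar\j}$, $\delta=\gamma_{s+1}$, to shift the one new resonance from $a+\sigma_a k_1$ directly to $b+\sigma_b l_1$. This places all $s+1$ resonances at the same admissible base point $b+\sigma_b l_1$, so the lattice condition is automatic, and only one dilation of constants is incurred. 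If you adopt that base point rather than $b$ itself, and skip the intermediate ``unify at $a$'' stage, your argument closes and matches the paper's.
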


For the proof see Lemma 5.6 of \cite{BLMres}

\begin{lemma}[Separation of
	resonances] \label{lemma speriamo sia vero}   There exist positive constants $\bar \tR$, $\tilde{\tC}_{s+1}$ and $\tilde{\tD}_{s+1}$ depending only on $d, \mu, \gamma_{s}, \tC_{s}, \tD_{s}$ such that, if
	$$
	\tC_{s+1} > \tilde{\tC}_{s+1}\,, \quad \tD_{s+1} >\tilde{\tD}_{s+1}, \quad \tR > \bar \tR\,,
	$$
	then the following holds true. Let $a \in E^{(s)}_{M, j}$ for
	some $M$ of dimension $s= 1, \dots, d-1$ and some $j \in \cJ_M$, and let $k^\prime \in \Z^d$
	be such that 
	\begin{gather*}
	\norm{k^\prime} \leq {\| a + \sigma k^\prime\|}^\mu\,,
	\end{gather*}
	for some $\sigma \in \{0, 1\}$.
	Then $\forall M^\prime \not\subset M$ s. t.  $s':=\dim M^\prime {\leq} s$ one
	has 
	$$
	a + k^\prime \notin \cZ^{(s')}_{M^\prime}\,.
	$$
\end{lemma}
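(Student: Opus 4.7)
The plan is to argue by contradiction, paralleling the proof of Lemma \ref{lemma che bei blocchi}: assume $a+k'\in\cZ^{(s')}_{M'}$ and use that $a\in E^{(s)}_{M,j}\subseteq\{B^{(s)}_{M,j}+M\}\cap A^{(s)}_{M,j}$ to find a nearby block representative $b\in B^{(s)}_{M,j}$ on which a new, independent resonance can be exhibited, contradicting the exclusion of higher-dimensional resonant zones in the definition of $B^{(s)}_{M,j}$.

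First I would extract the bad vector. Because $M'\not\subset M$ and $M'=\Span_\R M'\cap\Z^d$, one has $\Span_\R M'\not\subset \Span_\R M$, so a basis $k'_1,\dots,k'_{s'}$ of $M'$ can be chosen with some $k'_{\bar\jmath}\notin \Span_\R M$. From $a+k'\in\cZ^{(s')}_{M'}$ and the hypothesis $\|k'\|\leq\|a+\sigma k'\|^\mu$ there exists $\sigma'\in\{0,1\}$ with $a+k'+\sigma'k'_1\in\Lambda$ and, for $j=1,\dots,s'$,
\begin{equation*}
|\omega(a+k'+\sigma'k'_1)\cdot k'_j|\leq \tC_j\|k'_j\|\,\|a+k'+\sigma'k'_1\|^{\mom-\gamma_j},\qquad \|k'_j\|\leq \tD_j\|a+k'+\sigma'k'_1\|^\mu.
\end{equation*}
Using $a\in\{B^{(s)}_{M,j}+M\}\cap A^{(s)}_{M,j}$ I then pick $b\in B^{(s)}_{M,j}$ with $a-b\in M$, whence Lemma \ref{lemma.diametri} gives $\|a-b\|\leq C\|a\|^{1-\gamma_{s+1}}$. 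Since $b\in B^{(s)}_{M,j}\subset \cZ^{(s)}_M$, there also exist linearly independent $l_1,\dots,l_s\in M$ and $\sigma_b\in\{0,1\}$ realizing the order-$s$ resonance relations at $b+\sigma_b l_1$.

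The core step is to transfer the resonance of $a+k'+\sigma'k'_1$ with $k'_{\bar\jmath}$ to a resonance of $b+\sigma_b l_1$ with the same vector. The combined displacement
\begin{equation*}
(a+k'+\sigma'k'_1)-(b+\sigma_b l_1)=(a-b)+k'+\sigma'k'_1-\sigma_b l_1
\end{equation*}
is, by the estimates above and Remark \ref{rmk.a.or.b}, bounded by a constant multiple of $\|a\|^{1-\gamma_{s+1}}+\|a\|^\mu$, which is much smaller than $\|a\|$ thanks to \eqref{legami.ep} and Remark \ref{rmk.b.posto}. Applying Lemma \ref{incrementini}, possibly iterated to peel off the $k'+\sigma'k'_1$ and $\sigma_b l_1$ summands successively, with target exponent $\delta=\gamma_{s+1}$ yields constants $\tilde\tC_{s+1}$, $\tilde\tD_{s+1}$, depending only on $d,\mu,\gamma_s,\tC_s,\tD_s$, such that
\begin{equation*}
|\omega(b+\sigma_b l_1)\cdot k'_{\bar\jmath}|\leq \tilde\tC_{s+1}\|k'_{\bar\jmath}\|\,\|b+\sigma_b l_1\|^{\mom-\gamma_{s+1}},\qquad \|k'_{\bar\jmath}\|\leq \tilde\tD_{s+1}\|b+\sigma_b l_1\|^\mu.
\end{equation*}

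Choosing $\tC_{s+1}>\tilde\tC_{s+1}$, $\tD_{s+1}>\tilde\tD_{s+1}$, and $\tR$ large enough for Lemma \ref{lemma.diametri} and Remark \ref{rmk.a.or.b} to apply, these bounds together with the already known resonance relations of $b+\sigma_b l_1$ with $l_1,\dots,l_s\in M$ (whose constants are bounded by $\tC_s<\tC_{s+1}$ and $\tD_s<\tD_{s+1}$ through \eqref{parameters}) place $b$ in $\cZ^{(s+1)}_{M''}$, where $M'':=\Span_\R\{M,k'_{\bar\jmath}\}\cap\Z^d$ is a module of dimension exactly $s+1$ since $k'_{\bar\jmath}\notin\Span_\R M$. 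This directly contradicts $b\in B^{(s)}_{M,j}=A^{(s)}_{M,j}\setminus\bigcup_{\dim M''=s+1}\cZ^{(s+1)}_{M''}$. The main obstacle I expect is the clean application of Lemma \ref{incrementini} to the mixed displacement involving the three heterogeneous small terms $a-b$, $k'+\sigma'k'_1$ and $\sigma_b l_1$; a convenient way around this is to apply the lemma in three successive steps (from $a+k'+\sigma'k'_1$ to $a+\sigma'k'_1$ to $a$ to $b+\sigma_b l_1$), each contributing only a bounded multiplicative loss that can be absorbed into $\tilde\tC_{s+1},\tilde\tD_{s+1}$.
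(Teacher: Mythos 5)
Your approach matches the paper's proof exactly: argue by contradiction, extract the vector $k_{\bar\jmath}\in M'\setminus M$ from the witnessing resonance at $a+k'$, pick the representative $b\in B^{(s)}_{M,j}$ with $\|a-b\|\lesssim\|a\|^{1-\gamma_{s+1}}$, transfer the resonance to $b+\sigma_b l_1$ via Lemma \ref{incrementini}, and derive a contradiction with $b\in B^{(s)}_{M,j}$. The iteration you flag as a worry is not actually needed: the paper applies Lemma \ref{incrementini} once, taking $k=\sigma_a k_1+k'$, $h=k_{\bar\jmath}$, $l=\sigma_b l_1$, and the only nontrivial hypothesis, $\|\sigma_a k_1+k'\|\lesssim_\mu\|a+\sigma_a k_1+k'\|^\mu$, is verified directly by a short chain of estimates using $\mu<1$ and Remark \ref{ep.piccolo}.
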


For the proof see Lemma 5.7 of \cite{BLMres}

As a consequence of Lemma \ref{lemma che bei blocchi}, one has the
following lemma, whose proof is a small variant of the proof of Theorem
5.8 of \cite {BLMres}.

\begin{lemma}\label{lemma.partition}
	If the constants $\tR$, $\tC_1, \dots, \tC_d,$ $\tD_1, \dots, \tD_d$ are chosen as in Lemma \ref{lemma che bei blocchi}, then the extended blocks $\{E^{(s)}_{M, j}\}_{M \subset \Z^d\,, j \in \cJ_M}$ are a partition of $\Lambda.$
\end{lemma}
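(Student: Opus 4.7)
The plan is to establish disjointness of the $E^{(s)}_{M,j}$, since the covering property has already been observed. I would split the argument into three cases according to how two distinct blocks $E^{(s_1)}_{M_1,j_1}$ and $E^{(s_2)}_{M_2,j_2}$ compare.

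First, the case of different dimensions, say $s_1<s_2$. If $s_1\geq 1$, the very definition of $E^{(s_2)}_{M_2,j_2}$ includes the intersection with $(E^{(s_2-k)})^c$ for every $k=1,\dots,s_2-1$, in particular with $(E^{(s_1)})^{c}$, so $E^{(s_1)}_{M_1,j_1}\cap E^{(s_2)}_{M_2,j_2}=\emptyset$ automatically. If instead $s_1=0$, then $E^{(0)}_{\{0\},j_1}=A^{(0)}_{\{0\},j_1}\subset\cZ^{(0)}_{\{0\}}=\Omega$, whereas $E^{(s_2)}_{M_2,j_2}\subseteq A^{(s_2)}_{M_2,j_2}\subseteq \cZ^{(s_2)}_{M_2}$, and combining Remark \ref{rmk.inscatolate} (taking $r=1$) with Remark \ref{covering} gives $\cZ^{(s_2)}_{M_2}\cap\Omega=\emptyset$ for $s_2\geq 1$, proving disjointness in this subcase as well.

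Second, the case of equal dimension $s$ and distinct moduli $M_1\neq M_2$. For $s=0$ the only modulus of dimension zero is $\{0\}$, so this case is vacuous. For $s\geq 1$, two distinct sublattices of the same rank cannot satisfy $M_2\subseteq M_1$; hence $M_2\not\subseteq M_1$ and the hypotheses of Lemma \ref{lemma che bei blocchi} are satisfied (with $s'=s$). That lemma yields $E^{(s)}_{M_1,j_1}\cap \cZ^{(s)}_{M_2}=\emptyset$, and since $E^{(s)}_{M_2,j_2}\subseteq A^{(s)}_{M_2,j_2}\subseteq \cZ^{(s)}_{M_2}$ the conclusion follows.

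Third, the case of the same $M$ (hence same $s$) and distinct labels $j_1\neq j_2$. The sets $\{A^{(s)}_{M,j}\}_{j\in\cJ_M}$ are, by Definition \ref{equi}, the equivalence classes of the relation $\sim$ on $\cZ^{(s)}_M$, hence pairwise disjoint; since $E^{(s)}_{M,j}\subseteq A^{(s)}_{M,j}$ by construction, the extended blocks inherit this disjointness.

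The three cases exhaust all pairs of distinct extended blocks, so the family $\{E^{(s)}_{M,j}\}_{M\subset\Z^d,\,j\in\cJ_M}$ is a partition of $\Lambda$. The only nontrivial ingredient is Lemma \ref{lemma che bei blocchi}, which is precisely where the choice of the constants $\tC_{s+1},\tD_{s+1}$ and $\tR$ enters; dimension-wise separation is hard-wired into Definition \ref{def blocchi estesi}, and the within-equivalence-class partition is immediate. For this reason I expect no real obstacle beyond keeping the bookkeeping of the three cases clean, together with the boundary verification at $s=0$ which is handled by the $\Omega$ versus $\cZ^{(s)}_M$ disjointness coming from Remarks \ref{covering} and \ref{rmk.inscatolate}.
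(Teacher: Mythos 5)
Your proof is correct and follows essentially the same route as the paper's: split into the three cases (different dimensions, same dimension but different moduli, same modulus but different labels), treat the first and third by definition of the extended blocks and of the equivalence classes, and use Lemma \ref{lemma che bei blocchi} for the second. One small place where you are actually more careful than the paper: for distinct dimensions the paper asserts disjointness follows "immediately by the definition of the extended blocks," but the definition of $E^{(s_2)}_{M_2,j_2}$ for $s_2\geq 1$ only intersects with $(E^{(s_2-k)})^c$ for $k=1,\dots,s_2-1$, so the case $s_1=0$ is not literally covered by that definition; your separate argument via $E^{(0)}_{\{0\},j_1}\subset\Omega$ and $\cZ^{(s_2)}_{M_2}\cap\Omega=\emptyset$ (Remarks \ref{covering} and \ref{rmk.inscatolate}) closes that small gap explicitly.
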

%
%The following theorem summarizes the result of this subsection
%
%\begin{theorem} \label{cor partizione}
%	Under the hypotheses of Theorem  \ref{teo dieci piccoli blocchi}, the
%	blocks $E^{(0)},\ E^{(d)},\ \{E^{(s)}_M\}_{s, M}$ are a partition of
%	$E$. {Furthermore $E^{(d)}$
%		has dimension less then $n_*<\infty$, {with $n_*$ only depending on ${\frak c}, {\frak C}, \delta, \mu, \tau\,$} and, if $E = \Z^d,$  $E^{(0)}$ is of density 1 at infinity.}
%\end{theorem}
%\begin{proof}
%	Let $M_1$ and $M_2$ be two submoduli of respective dimension $s_1$ and $s_2\,.$ If $s_1 > s_2,$ by definition of the extended blocks one has
%	$ \displaystyle{E^{(s_1)}_{M_1} \cap E^{(s_2)}_{M_2} = \emptyset\,.}$
%	Let then $s_1 = s_2\,:$ by Lemma \ref{lemma che bei blocchi},
%	$$
%	E^{(s_1)}_{M_1} \cap Z^{(s_2)}_{M_2} = \emptyset\,,
%	$$
%	hence, being $\displaystyle{E^{(s_2)}_{M_2} \subseteq Z^{(s_2)}_{M_2}\,,}$ it follows that $\displaystyle{E^{(s_1)}_{M_1}}$ and $\displaystyle{E^{(s_2)}_{M_2}}$ have no intersection.
%\end{proof}

Then we have the following lemma, which is also a variant of Theorem 5.10 of
\cite{BLMres}, so we will omit its proof. 

\begin{lemma}\label{lemma.invarianza}
	If the constants $\tC_1, \dots, \tC_d,$ $\tD_1, \dots, \tD_d$ and $\tR$ are chosen as in Lemma \ref{lemma speriamo sia vero}, and $Z$ is an operator in normal form, then one has
	$$
	[\Pi_{E^{(s)}_{M, j}}, Z] = 0 \quad \forall M \subset \Z^d, \ \forall j \in \cJ_M\,.
	$$
\end{lemma}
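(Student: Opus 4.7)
The identity $[\Pi_{E^{(s)}_{M,j}}, Z] = 0$ is equivalent to the statement that $\Pi_b Z \Pi_a \neq 0$ with $a \in E^{(s)}_{M,j}$ forces $b \in E^{(s)}_{M,j}$. Fix such a pair $a, b$ and set $k' := b - a$. By Definition \ref{def.nf} of normal form, either $a$ or $b$ is resonant with $k'$ at order $1$, so $\|k'\| \leq \|a + \sigma k'\|^\mu$ for some $\sigma \in \{0,1\}$, and a direct check using the symmetry $k \leftrightarrow -k$ in Definition \ref{res} shows that both $a$ and $b$ lie in the one-resonance zone $\cZ^{(1)}_{M_{k'}}$ attached to the rank-one module $M_{k'} := \Span_\R\{k'\} \cap \Z^d$.

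The first reduction is $k' \in M$. Since $\dim M_{k'} = 1 \leq s$, Lemma \ref{lemma che bei blocchi} applied with $M$ and $M_{k'}$ yields $E^{(s)}_{M,j} \cap \cZ^{(1)}_{M_{k'}} = \emptyset$ whenever $M_{k'} \not\subset M$; as $a$ lies in both sets, we conclude $M_{k'} \subset M$, i.e. $k' \in M$. In particular $b \in B^{(s)}_{M,j} + M$, inherited from $a$.

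Next I verify $b \in A^{(s)}_{M,j}$. From $a \in A^{(s)}_{M,j} \subset \cZ^{(s)}_M$, pick witnesses $\sigma_a \in \{0,1\}$ and linearly independent $k_1,\dots,k_s \in M$ such that $a + \sigma_a k_1$ is resonant with each $k_j$ at order $j$. A direct application of the shift lemma \ref{incrementini} (the same tool already invoked in the proofs of Lemmas \ref{lemma che bei blocchi} and \ref{lemma speriamo sia vero}) with perturbation $k'$ of size $\lesssim \|a\|^\mu$ transports those resonance relations from $a + \sigma_a k_1$ to $b + \sigma_a k_1 = (a + \sigma_a k_1) + k'$, the small loss in the constants being absorbed by the slack built into the hierarchy $\tC_1 < \cdots < \tC_d$, $\tD_1 < \cdots < \tD_d$ fixed in Lemma \ref{lemma speriamo sia vero}. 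Hence $b \in \cZ^{(s)}_M$; since moreover $a - b = -k' \in M$ and $\|a - b\| \leq \max\{\|a\|^\mu, \|b\|^\mu\}$, the pre-relation $a \sim' b$ of Definition \ref{pre} holds, so $a \sim b$ and $b$ belongs to the same equivalence class $A^{(s)}_{M,j}$ as $a$. Together with the first paragraph this shows that $b$ lies in the candidate set $\{B^{(s)}_{M,j} + M\} \cap A^{(s)}_{M,j}$ of $E^{(s)}_{M,j}$.

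It remains, when $s \geq 2$, to rule out $b \in E^{(s')}_{M',j'}$ for every $s' < s$. Suppose such an $(s', M', j')$ exists; since $b \in \cZ^{(1)}_{M_{k'}}$ with $\dim M_{k'} = 1 \leq s'$, Lemma \ref{lemma che bei blocchi} applied with $M'$ and $M_{k'}$ forces $M_{k'} \subset M'$, so $k' \in M'$. Repeating the argument of the previous paragraph with $a$ and $b$ interchanged, that is, transferring the $s'$ resonance relations of $b \in A^{(s')}_{M',j'}$ to $a = b - k'$ via Lemma \ref{incrementini}, gives $a \in A^{(s')}_{M',j'} \cap \{B^{(s')}_{M',j'} + M'\}$, i.e.\ $a$ lies in the candidate set of $E^{(s')}_{M',j'}$. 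By the recursive definition of the extended blocks, $a$ must then belong to some $E^{(s'')}_{M'',j''}$ with $s'' \leq s'$; by the uniqueness of the block of $a$ (Lemma \ref{lemma.partition}) this block is $E^{(s)}_{M,j}$, whence $s = s'' \leq s' < s$, a contradiction. The main obstacle in the whole scheme is the resonance-transfer step in the third paragraph: its success hinges on the fact that the constants $\tC_j$, $\tD_j$ were chosen large enough in Lemmas \ref{lemma che bei blocchi}--\ref{lemma speriamo sia vero} to absorb the $O(\|a\|^{\tM - 1}\|k'\|)$ perturbation of $\omega$ induced by the shift.
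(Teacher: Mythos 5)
Your overall structure — establishing $b\in\cE_1$, $b\in\cE_2$, $b\in\cE_3$ in turn — matches the paper's, and the first reduction ($k'=b-a\in M$ via Lemma~\ref{lemma che bei blocchi}) is exactly the paper's Step~1. The gap lies in the third paragraph, and it is repeated in the fourth. You try to prove $b\in\cZ^{(s)}_M$ directly by transporting the witnesses $\sigma_a, k_1,\dots,k_s$ from $a$ to $b=a+k'$ via the shift Lemma~\ref{incrementini}, asserting that ``the small loss in the constants is absorbed by the slack built into the hierarchy.'' But the hierarchy $\tC_1<\cdots<\tC_d$, $\tD_1<\cdots<\tD_d$ is calibrated for a different purpose — in Lemmas~\ref{lemma che bei blocchi} and \ref{lemma speriamo sia vero} the shift is used to show that a point which is resonant with $M$ and additionally with some $k\notin M$ acquires a resonance \emph{at order $s+1$} (with the degraded constants $\tC_{s+1},\tD_{s+1}$), producing $s+1$ independent resonances and hence a contradiction with $b\in B^{(s)}_{M,j}$. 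The slack has nothing to spare for the converse manoeuvre of keeping $b$'s resonance with $k_\ell$ \emph{at the same order $\ell$}. Lemma~\ref{incrementini} returns constants $C^+>\tC_\ell$, $D^+>\tD_\ell$, so $b$ only lands in a zone defined with these larger constants, which is strictly larger than $\cZ^{(s)}_M$. Nor can one reorder the $k_\ell$'s: membership in $\cZ^{(s)}_M$ requires one of the witnesses to be resonant \emph{at order $1$}, with the sharpest constants $\tC_1=\tD_1=1$, and that cannot survive the shift. The conclusion ``Hence $b\in\cZ^{(s)}_M$'' therefore does not follow.

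The paper avoids this entirely by never trying to verify $b\in\cZ^{(s)}_M$ directly. Instead it sets up an induction on $s$ and argues by contradiction from the partition: if $b\notin A^{(s)}_{M,j}$, then $b$ lies in some other $E^{(s')}_{M',j'}$; the case $s'=s$ is killed by the \emph{negative} content of Lemma~\ref{lemma speriamo sia vero} (which shows $b\notin\cZ^{(s)}_{M'}$ for $M'\neq M$, and is exactly the statement the hierarchy's slack was designed to yield) together with item 2 of Lemma~\ref{lemma.diametri}; the case $s'>s$ reduces to the previous one via Remark~\ref{rmk.inscatolate}; the case $s'<s$, as well as Step~3 ($b\in\cE_3$), is dispatched by self-adjointness of $Z$ and the inductive hypothesis. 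Your proposal omits the induction and, by consequence, has no legitimate replacement for those last two steps: the fourth paragraph's ``transfer the $s'$ resonance relations of $b$ to $a$'' suffers from the same constant-degradation problem, whereas the paper simply observes $\Pi_a Z\Pi_b=(\Pi_b Z\Pi_a)^*\neq0$ and invokes the already-established invariance of the lower-dimensional extended blocks.
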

{We are now ready to prove Theorem \ref{teo.invarianza}:
\begin{proof}[Proof of Theorem \ref{teo.invarianza}]
 By Lemma \ref{lemma.partition}, the blocks $\{E^{(s)}_{M,j}\}$ are a partition of $\Lambda$, and by Lemma \ref{lemma.invarianza} they are left invariant by any normal form operator $Z$. It remains to prove that they are dyadic. This follows from Item 3 of Lemma \ref{lemma.diametri}: let $a$ and $b$ respectively be the points of minimum and maximum norm in $E^{(s)}_{M,j}$. Then Lemma \ref{lemma.diametri} implies
 $$
 \|b\| \leq \|a\| + \|b-a\| \leq \|a\| + C\|a\|^{1- \gamma_{s+1}}\leq 2 \|a\|\,,
 $$
 where in the last passage we have used that $1 - \gamma_{s+1} < 1$, that $\|a\| \geq \tR/2$ by Remark \ref{rmk.res.big}, and we have possibly increased the value of $\tR$.
\end{proof}
}

\section{Proof of the results on the applications}

\subsection{Proof of the results on the anharmonic oscillator}
\label{oscilla}

First we recall the properties of the action variables for the classical
Hamiltonian system
\begin{equation}
\label{ana.classica}
h_0(x,\xi)=\frac{\left\|\xi\right\|^2}{2}+
\frac{\left\|x\right\|^{2\ell}}{2\ell}\,, \quad {x \in \R^2,\ \xi \in \R^2\,,} 
\end{equation}
which were studied in \cite{anarmonico}.
The first action is 
defined to be the angular momentum $a_2(x,\xi):=x_1\xi_2-x_2\xi_1$.
Following \cite{anarmonico}, in order to define the action $a_1$,
consider the polar coordinates $(r,\theta)$ in $\R^2$, and the
effective potential $V_{L}^*(r):=\frac{L^2}{2r^2}+\frac{r^{2\ell}}{2\ell}
$. 
For $L\not=0$ {and $E >\min_rV^*_L(r)$,} we preliminary define
\begin{equation}
\label{ar}
a_r=a_r(E,L):=\frac{\sqrt2}{\pi} \int_{r_m}^{r_M}\sqrt{ E- V^*_L(r)}dr\ ,
\end{equation}
where $0<r_m <r_M$ are the two solutions of the equation $
E-V^*_L(r)=0 $.

The cone $\cC$ is defined by
\begin{equation}
\label{pigreco}
\cC:=\left\{a\in\R^2\ ;\ a_1\geq 0\ \textrm{ if } a_2 \geq0\,,\quad  a_1 \geq |a_2|\ \textrm{ if } \ a_2< 0 \right\}\,.
\end{equation}

{The following lemma was proved in \cite{anarmonico}.  }

\begin{lemma}[Lemma 4.5 of \cite{anarmonico}] \label{azioni.osci} 
	The function
	\begin{equation}
	\label{a1}
	a_1(E,L):=\left\{
	\begin{matrix}
	a_r(E,L) & for & L>0
	\\
	a_r(E,L)-L & for & L< 0
	\end{matrix}
	\right.   
	\end{equation}
	has the following properties:
	\begin{itemize}
		\item[(1)]  {it extends to a complex analytic function
                  of $L$ and $E$ defined in a complex neighbourhood of
                  the set}
		\begin{equation}
		\label{defia1.1}
	{	\left\{(E,L)\in\R^2\ :\ \left|L\right|<\left(\frac{2\ell}{\ell+1}E\right)^{\frac{\ell+1}{2\ell}}\ ,\quad
		E>0\right\}\ ;}
		\end{equation}
		\item[(2)] the map $E\mapsto a_1(E,a_2)$ admits an inverse
		$E=h_0(a_1,a_2)$ which is analytic in the interior of $\cC$. Furthermore it is homogeneous of degree
		$\frac{2\ell}{\ell+1}$ as a function of $(a_1,a_2)$.
		\item[(3)] the function
		$a_1(x,\xi):=a_1(h_0(x,\xi),a_2(x,\xi))$ is quasihomogeneous of
		degree $\ell+1$, namely
		$$
		a_1(\lambda x,\lambda^l\xi)=\lambda^{\ell+1}a_1(x,\xi)\ ,\quad \forall
		\lambda>0 \ .
		$$
		\item[(4)] There exist  positive constants $C_1,C_2$ s.t.
		$$
		C_1\langle a\rangle \leq \tk_0\leq C_2 \langle a\rangle \ ,
		$$
{                with $\tk_0$ defined in \eqref{k0.ana}.}
	\end{itemize}
\end{lemma}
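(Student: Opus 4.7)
My approach is to treat the four items in sequence, building on the classical radial reduction of the central-force Hamiltonian.

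For item (1), I first observe that $V_L^*(r)$ attains its minimum at $r_* = |L|^{1/(\ell+1)}$ with value $\frac{\ell+1}{2\ell}|L|^{2\ell/(\ell+1)}$; hence the condition $|L| < (2\ell E/(\ell+1))^{(\ell+1)/(2\ell)}$ is equivalent to $V_L^*(r_*) < E$, ensuring the existence of two simple positive turning points $r_m(E,L) < r_M(E,L)$. By the implicit function theorem applied to $E - V_L^*(r) = 0$ at each simple root, $r_m$ and $r_M$ extend to complex analytic functions of $(E,L)$ in the stated domain. Factoring $2\ell\, r^2 (E - V_L^*(r)) = (r - r_m)(r_M - r)\, g(r,E,L)$ with $g$ real-analytic and strictly positive on $[r_m,r_M]$, the substitution $r = \tfrac{r_M+r_m}{2} + \tfrac{r_M-r_m}{2}\sin\theta$ turns $a_r$ into an integral over $[-\pi/2,\pi/2]$ whose integrand is jointly analytic in $(\theta,E,L)$; analyticity of $a_r$ for $L \neq 0$ follows. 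The correction $-L$ in the case $L < 0$ is what cancels the branch singularity of $a_r$ at $L=0$ (coming from the collision $r_m \to 0$), so that $a_1$ extends analytically across $\{L=0\}$.

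For item (2), the identity $\partial_E a_r(E,L) = T(E,L)/(2\pi)$, where $T$ is the positive radial period, shows that $E \mapsto a_1(E,L)$ is a real-analytic diffeomorphism onto its image, so the inverse $h_0$ is real-analytic on the interior of $\cC$ by the inverse function theorem. For homogeneity, substitute $r = \lambda u$ in the integral defining $a_r$: since $V_{\lambda^{\ell+1} L}^*(\lambda u) = \lambda^{2\ell} V_L^*(u)$, the turning points scale as $r_{m,M}(\lambda^{2\ell} E,\lambda^{\ell+1} L) = \lambda\, r_{m,M}(E,L)$, and a direct computation gives
\begin{equation*}
a_r(\lambda^{2\ell} E,\lambda^{\ell+1} L) = \lambda^{\ell+1} a_r(E,L), \qquad \lambda > 0,
\end{equation*}
and the same scaling for $a_1$. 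Inverting yields $h_0(\mu a_1,\mu a_2) = \mu^{2\ell/(\ell+1)} h_0(a_1,a_2)$ for $\mu > 0$.

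Item (3) is then immediate: under the quasi-homogeneous dilation $(x,\xi) \mapsto (\lambda x,\lambda^\ell \xi)$, direct inspection gives $h_0(x,\xi) \mapsto \lambda^{2\ell} h_0(x,\xi)$ and $a_2(x,\xi) \mapsto \lambda^{\ell+1} a_2(x,\xi)$, so by the scaling of $a_1(E,L)$ proved above one gets $a_1(x,\xi) = a_1(h_0(x,\xi),a_2(x,\xi)) \mapsto \lambda^{\ell+1} a_1(x,\xi)$. For item (4), both $\tk_0(x,\xi)^{2\ell/(\ell+1)}$ and $\langle a(x,\xi)\rangle^{2\ell/(\ell+1)}$ are continuous, strictly positive, and coincide with functions quasi-homogeneous of degree $2\ell$ outside a compact set in $(x,\xi)$; their ratio is therefore continuous and bounded above and below on the compactified anisotropic sphere, giving the two-sided equivalence. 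The most delicate step is the analytic continuation across $L=0$ in item (1): $a_r(E,L)$ carries a $|L|$-type singularity at $L=0$ coming from $r_m$ colliding with the origin, and the asymmetric shift by $-L$ in the $L<0$ branch is precisely what is needed to absorb it; verifying this will require an explicit expansion of $a_r$ near $L=0$ and a matching of the two branches.
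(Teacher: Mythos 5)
The paper does not prove this lemma: it is quoted verbatim as Lemma~4.5 of \cite{anarmonico} (the paper by Bambusi, Langella and Rouveyrol on anharmonic oscillators), so there is no in-paper argument to compare against. Assessing your attempt on its own merits: items (2), (3), (4) are essentially correct. The computation of $V_L^*(r_*)$ and the identification of the condition \eqref{defia1.1}, the scaling $a_r(\lambda^{2\ell}E,\lambda^{\ell+1}L)=\lambda^{\ell+1}a_r(E,L)$ via the substitution $r=\lambda u$, the derived homogeneity of $h_0$, and the comparability of $\tk_0$ with $\langle a\rangle$ by the quasi-homogeneity/compactness argument are all standard and sound.

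The genuine gap, which you yourself flag in your closing sentence, is the analytic continuation across $L=0$ in item (1). This is the only nontrivial content of item (1) and you do not prove it. Your heuristic is right: $a_r(E,L)$ is even in $L$, $\partial_L a_r(E,0^+)=-\Delta\theta/\pi\to -1/2$, and the $-L$ shift on the branch $L<0$ turns the piecewise definition into $f(E,L^2)-L/2$ provided one can show $a_r(E,L)+\tfrac{|L|}{2}$ is analytic in $(E,L^2)$. But that last analyticity statement — absorbing the corner of $a_r$ at $L=0$ coming from the inner turning point $r_m\sim |L|/\sqrt{2E}$ colliding with the origin — is precisely what has to be established, and cannot be waved through: it requires a split of the action integral near $r=0$, an explicit evaluation of the singular part (an $\arcsin$-type contribution producing the exact $-|L|/2$), and analyticity of the remainder. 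As written, your item (1) records the expected mechanism but does not constitute a proof, and since the other three items lean on item (1), the proposal is incomplete at its central step.
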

{Still one has to show that $h_0$ extends to a complex neighborhood of
$\cC$, that it is steep on such an extended domain and finally to
prove the existence of the quantum actions.}

{To prove analyticity remark that a point of the boundary of the cone
  corresponds to a circular orbit of the particle, which in turn is
  a minimum of the effective 1-d Hamiltonian
  $h^*(p_r,r):=\frac{p_r^2}{2}+V^*_L(r)$.   Let $r_c=r_c(L)$ be the radius of such a
  circular orbit and denote $\tilde r:=r-r_c$. By Vey theorem \cite{Vey}, the
  Birkhoff normal form of $h^*$ is convergent in a complex
  neighborhood of the circular orbit, which means that there exists a
  canonical transformation which conjugates $h^*$ to a function}
  $$
\tilde h^*\left(\frac{p_r^2+\tilde r^2}{2}\right)
$$ {analytic in a neighborhood of the origin. Keeping track of the the
dependence on $L$, one easily verifies that $\tilde h^*$ is analytic
in a neighbourhood of any point of the boundary of the cone.}

{Then steepness is obtained by applying Theorem \ref{steep!} of the
appendix. According to Theorem \ref{steep!} it is enough to find a
point where the Arnold determinant (defined in Eq. \eqref{arnold})
does not vanish. Such a point is an arbitrary point of the boundary of
the cone: it was proved in Lemma 7 of \cite{BFS18} that the Arnold
determinant vanishes identically only when either $V(r)=-\frac{k}{r}$
or $V(r)=\frac{1}{2}kr^2$, which are not the case of the anharmonic
oscillator.}

  To conclude the proof we have then to prove the existence of the
  quantum actions. This is granted by the following result by
  Colin de Verdi\`{e}re (see Theorem 3.1 and Theorem 3.2 of \cite
  {CdV2}).

\begin{theorem}
	\label{3.1cdv}There exist two commuting pseudodifferential
	operators $A_j\in \cA_1^1$ satisfying Assumption \ref{assumption.A}. In
	particular there exists
	$\kappa\in\left(\frac{\Z}{4}\right)^2$ s.t their joint spectrum
        is contained in $\cC\cap(\Z^2+\kappa)$ and
	there exists a symbol $h\in S^{\frac{2\ell}{\ell+1}}_{AN,1}$ s.t.
	\begin{equation}\label{ham.laplaciano.1}
	H_0=h(A)\ .
	\end{equation}
	Furthermore one has an asymptotic expansion
	$$
	h=h_0+l.o.t.
	$$ with $h_0$ the classical
        Hamiltonian of the anharmonic oscillator written in terms of
        the action variables.
\end{theorem}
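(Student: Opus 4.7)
The plan is to read the statement as a direct application of Theorems 3.1 and 3.2 of \cite{CdV2} to our classical integrable system, with the small additional task of checking that the hypotheses of those theorems are met and that the output operators fit into our abstract framework (Assumption A and the symbol class $\cA_1^1$ of Definition \ref{symbol.a1}). I would first define $A_2$ directly as the Weyl quantization of $a_2$, i.e. the self-adjoint angular momentum operator $-\im(x_1\pa_{x_2}-x_2\pa_{x_1})$; this is a first order differential operator, lies in $\cA_1^1$, commutes with $H_0$ by rotational invariance, and has spectrum $\Z$. The construction of $A_1$ is the substantial step: the moment map $F(x,\csi) = (a_1(x,\csi),a_2(x,\csi))$ of the classical integrable system $(h_0,a_2)$ is globally defined outside the origin (Lemma \ref{azioni.osci}), and the singular locus of its Liouville foliation consists only of elliptic circular orbits, so the hypotheses of \cite{CdV2} are satisfied. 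The output of their Theorem 3.1 is a pseudodifferential operator $A_1$ commuting with both $A_2$ and $H_0$, with principal symbol equal to $a_1$ outside a neighborhood of the origin and with a Maslov-type shift $\kappa\in\frac14\Z^2$ so that the joint spectrum satisfies $\Lambda\subset\Z^2+\kappa$.

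Next I would verify Assumption A. Items A.i and A.iii are built into the construction: $A_j\in\cA_1^1$ and $\Lambda\subset\Z^2+\kappa$, and $\Lambda\subset\cC$ holds because the classical image of $F$ is contained in $\cC$ (by the definition \eqref{a1} of $a_1$, the positivity of $a_r$, and the case distinction in \eqref{pigreco}), together with a standard microlocal Weyl-type comparison between the joint spectrum and the image of $F$. Assumption A.ii follows from ellipticity: by Lemma \ref{azioni.osci}(4), the symbol $a_1^2+a_2^2$ of $A_1^2+A_2^2$ is equivalent to $\tk_0^2$, and hence (by the pseudodifferential calculus of Definition \ref{symbol.a1}) $\uno+A_1^2+A_2^2\geq c_1 K_0^2$ modulo a smoothing operator, which is absorbed by taking $c_1$ slightly smaller. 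Assumption A.iv is a functional calculus statement in the symbol class $S^m_\varsigma$ of Definition \ref{horma}: I would invoke the functional calculus of \cite{charbonnel}, extended to the class $S^m_{AN,\rho}$ with $\rho<1$ in Section 6 of \cite{anarmonico}, to get that $f(A_1,A_2)\in\cA_{\rho(\varsigma)}^m$ for a continuous increasing $\rho(\varsigma)$ with $\rho(1)=1$.

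Finally, to establish the representation $H_0=h(A_1,A_2)$ with an asymptotic expansion $h\sim h_0'+\text{l.o.t.}$, I would use Theorem 3.2 of \cite{CdV2}: since $H_0$ commutes with both $A_1$ and $A_2$ and is a pseudodifferential operator, there exists a symbol $h\in S^{2\ell/(\ell+1)}_{AN,1}$ such that $H_0-h(A_1,A_2)$ is smoothing, with principal part of $h$ equal to $h_0'$ (the regularized classical Hamiltonian written in action variables via Lemma \ref{azioni.osci}(2)). The remaining smoothing discrepancy is then absorbed into the asymptotic tail of $h$ using Assumption I.v (closure of $\cA_\rho$ under smoothing perturbations), together with the fact that A.iv allows us to quantize a smoothing correction to $h$ itself.

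The main obstacle in this proof is really outside of our control: it is the technical heart of \cite{CdV2}, namely the construction of $A_1$ with joint spectrum on a translated integer lattice. This is delicate in dimension $2$ because of the nontrivial topology of the Liouville foliation at the boundary of the image of the moment map (the circular orbits); Colin de Verdière handles it by a microlocal Birkhoff normal form near each elliptic singular fiber, glued via parameter-dependent Fourier integral operators (parameter $=$ joint spectrum of $A_2$) to a global conjugation on the regular part. I would rely on this as a black box; all properties of the classical actions that we need as input for it are the ones collected in items (1)--(4) of Lemma \ref{azioni.osci}.
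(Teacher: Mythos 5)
Your proposal is correct and follows essentially the same route as the paper: the existence of $A_1,A_2$ with the stated spectral properties and the representation $H_0=h(A)$ are obtained by citing Theorems 3.1 and 3.2 of \cite{CdV2}, while the functional-calculus property A.iv is obtained from \cite{charbonnel} as extended to the $\rho<1$ classes in \cite{anarmonico}. You fill in more detail than the paper does (explicit verification of A.ii via Lemma \ref{azioni.osci}(4), the remark that $A_2$ needs no spectral correction since the angular momentum already has spectrum $\Z$, and the discussion of the elliptic-singularity hypothesis of \cite{CdV2}), but the citations and the logical skeleton are the same.
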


Finally we have to regularize the Hamiltonian at the origin, but this
is simply done by modifying it by the addition of a function with
compact support, whose quantization is a smoothing operator.

\subsection{Proof of the results on rotation-invariant surfaces}\label{ruota}

First we recall the Hamiltonian of the classical geodesic flow in the coordinates $(\phi,\theta)$ namely
\begin{equation}
\label{classical.manif}
h_0=\frac{p_\theta^2}{2}+\frac{p_\phi^2}{2r(\theta)^2}\ .
\end{equation}

Again we put $
a_2:=p_\phi$ and, 
for $p_\phi\not=0$ we define
\begin{equation}
\label{azint}
a_1=a_1(E,p_\phi):=
\frac{1}{\pi}\int_{\theta_m}^{\theta_M}\sqrt{E-\frac{p_\phi^2}{r(\theta)^2}
} d\theta +\left|p_\phi\right|\ ,
\end{equation}
where $\theta_m$ and $\theta_M$ are the solutions of
$E-\frac{p_\phi^2}{r(\theta)^2}=0$. 
The cone $\cC$ is defined by
$$
\cC:=\left\{(a_1,a_2)\in\R^2\ :\ a_1\geq |a_2|\geq 0\right\}\ ,
$$
and Colin de Verdi\`{e}re \cite{CdV2} proved the analogue of Theorem \ref{azioni.osci}
for this case. The proof can then be concluded exactly in the same way
as for the anharmonic oscillator. An explicit computation shows that
the conditions for the nonvanishing of the Arnold determinant at the
boundary of the cone are $\beta_2\not=0$ and \eqref{steep.riv.2}.  

\subsection{Proof of results on Lie groups}\label{sec.lie.proofs}

In order to prove that the quantum actions $A_1, \dots, A_d$  defined
in \eqref{lie.action} satisfy Assumptions \ref{assumption.A}, in this section we use
the intrinsic formulation of pseudodifferential calculus introduced in
\cite{Fischer, ruzhansky_turunen}. We keep
the notations of Subsection \ref{liegroups}, and for any $\csi \in
\widehat{ G}$, we denote by
\begin{equation}\label{lambda.csi}
\lambda_\csi := \| \tw_\csi + \underline \tf \|^2 - \|\underline \tf\|^2\,
\end{equation}
the corresponding eigenvalue of the Laplace-Beltrami operator $-\Delta_g$.

Following \cite{Fischer}, we consider the following class of symbols:
\begin{definition}\label{simbolo.fischer}
	Given $m \in \R$ and $0 \leq \delta_1 \leq \delta_2 \leq 1$,
        we say that a map $\sigma : G \times \Rep(G) \ni (x,
        \csi) \mapsto \sigma(x, \csi) \in \cB(\cH_\xi)$ is a symbol of
        order $m$, and we write $\sigma \in {S^{m}_{F,\delta_1,\delta_2}}$, if for any $\csi \in \widehat{G}$ the map $x
          \mapsto \sigma (x, \csi)$ is smooth and $\forall \alpha,
          \beta \in \N$ and for any smooth differential operator
          $D_x^\alpha$ of order $\alpha$ and any $\tau = (\tau_1,
          \dots, \tau_\alpha)$ with $\tau_i \in \widehat{ G}$, there
          exists $C>0$ such that
	\begin{equation}\label{insondabile}
	\| D_x^\alpha \Delta^\beta_\tau \sigma(x, \csi)\|_{{\mathcal B}(\mathcal H_{\csi}^{\otimes\tau})} \leq  C (1 + \lambda_\csi)^{\frac{m -\delta_2 \beta + \delta_1 \alpha}{2}}\,,
	\end{equation}
	where $\Delta^\beta_\tau := \Delta_{\tau_1} \cdots \Delta_{\tau_\beta}$,
	\begin{equation}\label{incrementi}
	\Delta_{\tau_i} \sigma(x, \csi) := \sigma(x, \tau_i \otimes \csi) - \sigma(x, \uno_{{\mathcal H}_{\tau_i}}\otimes \csi)\,,
	\end{equation}
	and $\| \cdot \|_{{\mathcal B}(\mathcal H_{\csi}^{\otimes\tau})}$ is the operatorial norm on ${\mathcal H}_{\tau_1} \otimes \cdots \otimes {\mathcal H}_{\tau_\beta} \otimes {\mathcal H}_\csi\,.$
\end{definition}

With the above definition, one defines the quantization $\textrm{Op}(\sigma)$ of a symbol $\sigma$ as
\begin{equation}\label{quant.fischer}
\textrm{Op}(\sigma) \psi(x) = \sum_{\csi \in \widehat{ G}} d_\csi \textrm{Tr}\left(\csi(x) \sigma(x, \csi) \hat{\psi}_\csi\right)\,,
\end{equation}
where $d_\csi = \textrm{dim} {\mathcal H}_\csi$ and $\hat{\psi}_\csi$ is the $\csi$-th Fourier coefficient of $\psi$.
%namely
%\begin{equation}\label{fou.lie}
%\hat{\phi}_\csi = \int_G \phi(x) \csi(x)^*\,d x \quad \forall \csi\,,
%\end{equation}
%where $\csi(x)^*$ is the adjoint representation with respect to $\csi$. \\
If there exists {$\sigma \in S^{m}_{F,\delta_1, \delta_2}$} such that $\Sigma = \textrm{Op}(\sigma)$, we write {$\Sigma \in \OPS_{F,\delta_1, \delta_2}^m$}.
\begin{remark}
	Assume that the symbol $\sigma$ does not depend on $x$ and that it has the form $\sigma(\xi) = m(\xi) \uno_{\cH_\xi}$ for some function $m$.
	Then $\textrm{Op}(\sigma)$ defined as in \eqref{quant.fischer} acts as a Fourier multiplier, which multiplies each frequency $\hat{\psi}_\csi$ by the factor $m(\xi)$. This is for instance the case of the Laplace-Beltrami operator $-\Delta_g$ and of the quantum actions $A_1, \dots, A_d$.
\end{remark}

The remarkable fact is that, as proved in
\cite{Fischer}, the pseudodifferential calculus constructed in this way
is equivalent to the pseudodifferential calculus constructed
considering $G$ as a manifold. Precisely the following result was proven in \cite{Fischer}:
\begin{theorem}\label{teo.fischer}
	Let $\delta_2 = 1 -\delta_1 =: \varrho$; then the class {$\OPS^{m}_{F,\delta_1, \delta_2}$} coincides with the class of pseudodifferential operators in the sense of H\"ormander with symbol {$S^{m}_{H,\varrho}$} on $G$.
\end{theorem}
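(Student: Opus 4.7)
The plan is to establish the equivalence through both inclusions, working via the Peter--Weyl decomposition which expresses every function on $G$ in terms of matrix coefficients of irreducible unitary representations. Under this decomposition, the quantity $\|\tw_\csi + \underline \tf\|^2$ plays the role that $\|\csi\|^2$ plays in Euclidean H\"ormander classes, while the finite-difference operators $\Delta_\tau$ of \eqref{incrementi} encode the discrete analogue of $\csi$-derivatives on the cone of dominant weights.

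For the inclusion $OPS^m_{F,\delta_1,\delta_2}\subseteq$ H\"ormander's class, given $\sigma \in S^m_{F,\delta_1,\delta_2}$ I would write $\textrm{Op}(\sigma)\psi(x)$ as in \eqref{quant.fischer}, pass to local coordinates via the exponential map around each point $x_0\in G$, and then verify that the resulting operator in Euclidean coordinates has a Schwartz kernel matching that of a classical H\"ormander pseudodifferential operator with symbol in $S^m_{H,\varrho}$, where $\varrho = \delta_2 = 1-\delta_1$. The key computational step is an asymptotic expansion of matrix coefficients of representations with large highest weight, combined with the Weyl integration formula and stationary phase arguments, to convert the Fischer-type estimates in $\Delta_\tau^\beta$ into classical $\partial_\csi^\beta$ estimates in local coordinates.

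For the reverse inclusion, given a H\"ormander pseudodifferential operator $P$ with symbol $p\in S^m_{H,\varrho}$, the natural candidate for the intrinsic symbol is
\begin{equation*}
\sigma_P(x,\csi) := \csi(x)^{-1}\, (P\csi)(x),
\end{equation*}
where $P$ acts componentwise on matrix-valued functions. Smoothness in $x$ and the $D_x^\alpha$-estimates of \eqref{insondabile} then transfer directly from the $x$-regularity of $p$ via the Leibniz rule. The nontrivial estimates are those involving $\Delta_\tau^\beta$: these require relating finite differences on $\widehat{G}$ to $\csi$-derivatives of $p$ through the identification $\csi\leftrightarrow \tw_\csi + \underline \tf$, combined with Taylor-type expansions of $p$ at large $\csi$.

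The main obstacle is precisely this translation between the discrete, representation-theoretic structure of $\Delta_\tau$ on $\widehat G$ and the continuous $\partial_\csi$-derivative structure of H\"ormander symbols. The bridge is provided by the Clebsch--Gordan decomposition of $\tau\otimes\csi$: the irreducible constituents carry highest weights that differ from $\tw_\csi$ by bounded increments (namely, weights of $\tau$), so that a single application of $\Delta_\tau$ behaves, to leading order in $\|\tw_\csi\|$, like a first-order finite difference which approximates $\partial_\csi$ up to errors of strictly lower order in $\lambda_\csi^{1/2}$. Iterating this observation and carefully bookkeeping the loss of $\delta_2=\varrho$ per finite difference (matching exactly the $\varrho$ gain per $\partial_\csi$ in $S^m_{H,\varrho}$) yields the required equivalence of the two families of estimates. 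The complete argument is carried out in \cite{Fischer}.
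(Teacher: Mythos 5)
The paper itself does not prove Theorem \ref{teo.fischer}; it is stated with the remark ``The following result was proven in \cite{Fischer}'' and the citation is the entirety of the paper's argument. Your sketch, which in the end also defers to \cite{Fischer}, is therefore taking the same approach as the paper, namely relying on Fischer's theorem. The intermediate steps you outline (local coordinates via the exponential map, the intrinsic full symbol $\sigma_P(x,\csi)=\csi(x)^{-1}(P\csi)(x)$, and translating $\Delta_\tau$-differences into $\partial_\csi$-derivatives via the Clebsch--Gordan decomposition of $\tau\otimes\csi$) are a reasonable and faithful description of the strategy in Fischer's work, but since the paper does not reproduce that argument, there is nothing further to compare.
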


By Theorem \ref{teo.fischer}, the proof that the operators $A_1, \dots, A_d$ defined in \eqref{lie.action} are actually pseudodifferential operators of order 1 reduces to the following:
\begin{lemma}\label{lemma.sono.s1}
	For $j = 1, \dots, d$ let $\sigma_{A_j}$ be defined as in \eqref{def.sigmap}, then {$\sigma_{A_j} \in S_{F,1, 0}^1$}.
\end{lemma}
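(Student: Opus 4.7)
The plan is to verify the bounds \eqref{insondabile} directly from the very explicit structure of $\sigma_{A_j}$. First I would observe that $\sigma_{A_j}$ does not depend on $x\in G$, so every left-invariant derivative $D_x^\alpha\sigma_{A_j}$ vanishes for $\alpha\geq 1$; it thus suffices to show, for every $\beta\geq 0$ and every tuple $(\tau_1,\ldots,\tau_\beta)\in\widehat G^{\beta}$, the uniform-in-$\xi$ estimate
$$
\bigl\|\Delta_\tau^\beta\sigma_{A_j}(\xi)\bigr\|_{\cB(\cH_\xi^{\otimes\tau})} \;\leq\; C_{\beta,\tau}\,(1+\lambda_\xi)^{1/2}.
$$

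The case $\beta=0$ reduces to the scalar bound $|\tw_\xi^j+1|\lesssim (1+\lambda_\xi)^{1/2}$. Since by \eqref{cono.pesi} the components $\tw_\xi^j$ are nonnegative integers, one has $|\tw_\xi^j+1|\lesssim\|\tw_\xi\|+1$; on the other hand, from \eqref{lambda.csi}
$$
\lambda_\xi=\|\tw_\xi\|^2+2\,\tw_\xi\cdot\underline\tf,
$$
which is a positive definite quadratic form on the dominant cone $\Lambda^+(G)$, hence bounded from below by $c\|\tw_\xi\|^2$ for some $c>0$. Combining the two estimates yields the claim.

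For $\beta\geq 1$ I would exploit the fact that $\sigma_{A_j}$ depends \emph{linearly} on the highest weight. Decomposing $\tau_1\otimes\xi=\bigoplus_\eta m_\eta\,\eta$ into irreducibles, each component $\eta$ has highest weight of the form $\tw_\eta=\tw_\xi+\mu$ for some weight $\mu$ of $\tau_1$, and the very definition \eqref{incrementi} therefore gives
$$
\Delta_{\tau_1}\sigma_{A_j}(\xi)\;=\;\bigoplus_\eta(\tw_\eta^j-\tw_\xi^j)\,m_\eta\,\uno_{\cH_\eta}\;=\;\bigoplus_\eta\mu^j\,m_\eta\,\uno_{\cH_\eta}.
$$
The crucial point is that the scalars $\mu^j$ involve only weights of $\tau_1$ and are therefore \emph{independent of $\xi$}, so $\|\Delta_{\tau_1}\sigma_{A_j}(\xi)\|\leq C(\tau_1):=\max\{|\mu^j|:\mu\text{ weight of }\tau_1\}$, which is trivially dominated by $C(\tau_1)(1+\lambda_\xi)^{1/2}$. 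For $\beta\geq 2$ I would iterate: each further $\Delta_{\tau_i}$ is the difference of two tensor-product realizations of the previously computed symbol, and since every scalar involved continues to arise from weights of $\tau_1,\ldots,\tau_i$, the resulting operator norm stays uniformly bounded in $\xi$ by a constant depending only on $(\tau_1,\ldots,\tau_\beta)$. The main obstacle I expect is the bookkeeping needed to handle cleanly the successive decompositions of $\tau_1\otimes\cdots\otimes\tau_\beta\otimes\xi$ into irreducibles and the identification with the ambient tensor-product space $\cH_{\tau_1}\otimes\cdots\otimes\cH_{\tau_\beta}\otimes\cH_\xi$; once this is set up, the required estimate follows at once, and Theorem \ref{teo.fischer} then yields $A_j\in\rA{1}^{1}$.
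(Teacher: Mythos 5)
Your analysis for $\beta=0$ and $\beta=1$ is essentially identical to the paper's: bound $\tw_\xi^j+1$ by $(1+\lambda_\xi)^{1/2}$ using the structure of the dominant cone, and for the first difference observe that the Clebsch--Gordan decomposition of $\tau_1\otimes\xi$ yields the $\xi$-independent scalars $\mu^j$.

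For $\beta\geq 2$, however, there is a genuine gap. The target estimate you set out at the start — $\|\Delta_\tau^\beta\sigma_{A_j}(\xi)\|\leq C_{\beta,\tau}(1+\lambda_\xi)^{1/2}$ for all $\beta$ — is not the estimate actually required. The paper's own proof works with \eqref{no.x}, i.e.
\begin{equation*}
\|\Delta^\beta_\tau\sigma_{A_j}(\xi)\|\;\leq\;C_\tau(1+\lambda_\xi)^{\frac{1-\beta}{2}}\,,
\end{equation*}
whose exponent is negative as soon as $\beta\geq 2$: the higher differences must \emph{decay} in $\lambda_\xi$, not merely stay bounded. (The displayed label "$S^1_{F,1,0}$" in the statement must be a typo for $S^1_{F,0,1}$, since Definition~\ref{simbolo.fischer} imposes $\delta_1\leq\delta_2$ and Theorem~\ref{teo.fischer} with $\delta_1=0,\ \delta_2=1$ gives the H\"ormander class $S^1_{H,1}$, which is exactly what is needed for $A_j\in\cA_1^1$.) Your argument that "the resulting operator norm stays uniformly bounded in $\xi$" therefore does not close the case $\beta\geq 2$, even though you have all the ingredients in hand: the crucial observation — which the paper makes and you stop just short of — is that because the first difference \eqref{van.geemen} already consists of the $\xi$-independent scalars $\mu^j$, a further difference operator applied to it \emph{vanishes identically}, i.e.\ $\Delta^\beta_\tau\sigma_{A_j}=0$ for all $\beta\geq 2$. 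That exact vanishing, a consequence of linearity of $\sigma_{A_j}$ in the highest weight, is what makes \eqref{no.x} trivially true for $\beta\geq 2$ and closes the proof.
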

\begin{proof}
	According to Section 3.2 of \cite{Fischer}, since the symbols $\sigma_{A_j}$ are independent of $x$,  it is sufficient to prove that for any $\beta \in \N$ and $\Delta_\tau^\beta = \Delta_{\tau_1} \cdots \Delta_{\tau_\beta}$ there exists $C=C_\tau>0$ such that
	\begin{equation}\label{no.x}
	\|\Delta^\beta_{\tau}\sigma_{A_j}(\csi)\|_{{\mathcal B}({\mathcal H}^{\otimes \tau})} \leq C_\tau (1 + \lambda_\csi)^{\frac{1-\beta}{2}} \quad \forall \csi \in \widehat{ G}\,.
	\end{equation}
	If $\beta=0$, one has
	\begin{equation}
	\|\sigma_{A_j}(\csi)\|_{{\mathcal B}({\mathcal H}^{\otimes \tau})} = \max_{\csi \in \widehat{ G}}\,\{\tw_\csi^j + 1\} \lesssim (1 + \lambda_{\csi})^{\frac{1}{2}}\,.
	\end{equation}
	%Since $h_0$ defined as in \eqref{lie.hzero} defines the square of a norm on $\R^d$, there exists a positive constant $c$ such that $\forall \overline y = (y^1, \dots, y^d)$, $\forall j = 1, \dots, d$ one has
	Consider now $\beta = 1$: then one has to estimate $\forall \tau \in \widehat{ G} $
	$$
	\|\Delta_\tau \sigma_{A_j}(\csi)\|_{{\mathcal B}({\mathcal H}_{\tau} \otimes {\mathcal H}_\csi)} = \|\sigma_{A_j}(\tau \otimes \csi) - \sigma_{A_j}(\uno_{{\mathcal H}_\tau} \otimes \csi)\|_{{\mathcal B}({\mathcal H}_{\tau} \otimes {\mathcal H}_\csi)}\,.
	$$
	Now (see for instance \cite{fulton_harris}, Exercise 25.33), for any $\tau, \csi \in \widehat{ G}$, if $\tw_\csi$ is the highest weight of $\csi$, then the representation $\tau \otimes \csi$ is isomorphic to a finite direct sum of representations $\zeta_\mu$ with highest weight $\tw_\csi + \mu$, for any weight $\mu$ of the representation $\tau$. Thus one has
	\begin{equation}\label{van.geemen}
	\begin{aligned}
	\Delta_\tau \sigma_{A_j}(\csi) &= \bigoplus_{\mu\in \textrm{ weight of } \tau} \left((\tw_\csi + \mu)^j + 1 - \left(\tw_\csi^j + 1\right)\right) \uno_{{\mathcal H}_{\zeta_\mu}}\\
	&= \bigoplus_{\mu\in \textrm{ weight of } \tau} \mu^j\, \uno_{{\mathcal H}_{\zeta_\mu}}\,.
	\end{aligned}
	\end{equation}
	This implies
	$$
	\begin{aligned}
	\|\Delta_\tau \sigma_{A_j}(\csi)\|_{{\mathcal B}({\mathcal H}_{\tau} \otimes {\mathcal H}_\csi)} & = \max_{\mu\in \textrm{ weight of } \tau} | \mu^j| =: C_\tau\,,
	\end{aligned}
	$$
	which gives \eqref{no.x} with $\beta = 1$.
	Finally, if $\beta \geq 2$, by \eqref{van.geemen} one gets that $\Delta^\beta_{\tau} \sigma_{A_j} = 0\,,$
	thus \eqref{no.x} is trivially verified.
\end{proof}

\appendix

\section{The generalized commutator lemma}\label{a.commut}

In this section we apply the
commutator lemma proved in \cite{Ras12} to our pseudodifferential
setting. 

Given a multiindex $j=(j_1,...,j_d)$, {and $d$ selfadjoint pairwise
commuting operators $A_1,...,A_d$,} we will denote
$$
\operatorname{Ad}_{A}^j:=\operatorname{Ad}_{A_1}^{j_1}...\operatorname{Ad}_{A_d}^{j_d}\ .
$$
Then the following theorem holds:
\begin{theorem}[Theorem 3 of \cite{Ras12}]
	\label{commutator}
	Let $B\in\cB(\cH)$ be such that $\operatorname{Ad}_A^{j}B\in\cB(\cH)$
	for all multiindexes $j$ {with $|j|\leq n_0$}. Let $f\in S^m_1$, then, for all positive $t_1,t_2$ and all integers $n$
	fulfilling
	$$
	t_1+t_2+m<n+1\ ,{\quad n+1\leq n_0}
	$$
	one has 
	\begin{equation}
	\label{formula_commuti}
	\left[B;f(A)\right]=\sum_{|j|=1}^n\frac{1}{j!}\partial^jf(A)\operatorname{Ad}_A^j B+R_n(A,B)\,.
	\end{equation}
	Furthermore there exists a positive constant $C$ (which
	depends on $n$), independent of $A$ and $B$, s.t.
	$$
	\left\|R_n(A,B)\right\|_{\cB(\cH^{-t_2}, \cH^{t_1})} \leq C \sum_{|j|\leq n+1}
	\left\|\operatorname{Ad}_A^{j}B\right\|_{\cB(\cH)}\ .
	$$
\end{theorem}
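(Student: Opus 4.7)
The natural tool is the Helffer--Sj\"ostrand functional calculus, applied to the commuting family $A=(A_1,\dots,A_d)$. For $f\in S^m_1$ one first constructs an almost analytic extension $\tilde f:\C^d\to\C$ with $\tilde f|_{\R^d}=f$ and such that for every $N\in\N$ and every $j$
\[
\bigl|\bar\partial_{\bar z_j}\tilde f(z)\bigr|\leq C_{N}\,\langle z\rangle^{m-1-N}\,|\mathrm{Im}\,z_j|^{N},
\]
which yields the representation
\[
f(A)=\frac{1}{(2\pi i)^{d}}\int_{\C^d}\bar\partial\tilde f(z)\,\prod_{j=1}^{d}(z_j-A_j)^{-1}\,dL(z),
\]
convergent in $\cB(\cH)$ because $\|(z_j-A_j)^{-1}\|_{\cB(\cH)}\leq|\mathrm{Im}\,z_j|^{-1}$.

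The heart of the argument is to iterate the resolvent commutator identity $[R_{z_j},B]=R_{z_j}[A_j,B]R_{z_j}$, with $R_{z_j}:=(z_j-A_j)^{-1}$, inside this integral. Performed $n_j$ times in the $j$-th slot, it telescopes to
\[
[R_{z_j},B]=\sum_{k_j=1}^{n_j}c_{k_j}\,\mathrm{Ad}_{A_j}^{k_j}B\cdot R_{z_j}^{k_j+1} + R_{z_j}\,c'_{n_j+1}\,\mathrm{Ad}_{A_j}^{n_j+1}B\cdot R_{z_j}^{n_j+1},
\]
with explicit constants coming from the convention for $\mathrm{Ad}$. Performing this simultaneously in every slot, collecting terms by multi-index $k$, and using the key identity
\[
\frac{1}{(2\pi i)^{d}}\int_{\C^d}\bar\partial\tilde f(z)\,\prod_{j}R_{z_j}^{k_j+1}\,dL(z)=\frac{1}{k!}\,\partial^{k}f(A),
\]
obtained by differentiating the Helffer--Sj\"ostrand representation of $f$, one converts each polynomial term into $\frac{1}{k!}\partial^{k}f(A)\,\mathrm{Ad}_A^{k}B$, producing exactly the stated expansion.

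The remainder $R_n(A,B)$ is then a finite sum of terms of the form $\mathrm{Ad}_A^{j}B$ with $|j|=n+1$ sandwiched between an extra $R_{z_j}$ and integrated against $\bar\partial\tilde f$. Its norm in $\cB(\cH^{-t_2},\cH^{t_1})$ is bounded by $\sum_{|j|=n+1}\|\mathrm{Ad}_A^{j}B\|_{\cB(\cH)}$ times an integral in which the $n+1$ extra resolvents each contribute a factor $|\mathrm{Im}\,z_j|^{-1}$ in $\cB(\cH^{s},\cH^{s})$, while $\bar\partial\tilde f$ supplies arbitrarily many powers $|\mathrm{Im}\,z|^{N}\langle z\rangle^{m-1-N}$. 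The effective order of the remainder is $m-n-1$, so that it maps $\cH^{-t_2}\to\cH^{t_1}$ precisely under the hypothesis $t_1+t_2+m<n+1$.

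I expect the main obstacle to be this last Sobolev estimate: since the operator $K_0$ defining the scale $\cH^{s}$ is a priori unrelated to $A$, the bound $\|R_{z_j}\|_{\cB(\cH^{s},\cH^{s})}\leq|\mathrm{Im}\,z_j|^{-1}$ is not immediate and requires commuting $K_0^{s}$ through $R_{z_j}$. The resulting lower order errors must be absorbed into the remainder by an induction on $n$, while keeping the constants uniform in $z\in\C^d$; properly tracking the mapping property across $\cH^{-t_2}\to\cH^{t_1}$ during this commutation is the delicate point of the proof.
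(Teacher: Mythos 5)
The paper does not prove this statement: it is imported verbatim as Theorem~3 of \cite{Ras12}, so there is no in-text argument to compare against, and what follows assesses your outline on its own terms. The Helffer--Sj\"ostrand plus iterated-resolvent strategy is a standard route to estimates of this type (and likely close to what Rasmussen actually does), but your sketch leaves genuine gaps precisely where the technical work lies.

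First, the Helffer--Sj\"ostrand representation as written does not converge in $\cB(\cH)$ when $m\geq 0$: the integrand is only $O\big(\langle z\rangle^{m-1-N}|\mathrm{Im}\,z|^{N}\prod_j|\mathrm{Im}\,z_j|^{-1}\big)$, which is not integrable at infinity for positive $m$; $f(A)$ must first be defined by regularization (e.g.\ $f_\epsilon(a)=f(a)\chi(\epsilon a)$, with a limit taken on $\cH^{+\infty}$) or by factoring out a power of $\langle A\rangle$, and the resulting error terms must be tracked through the commutator expansion. Second, ``performing this simultaneously in every slot'' glosses over the combinatorics of truncating at \emph{total} degree $n$: iterating the $j$-th resolvent identity $n_j$ times controls $n_j$ in that slot alone, not $|j|=n_1+\cdots+n_d$; one needs a Leibniz-type expansion of $[\prod_j R_{z_j},B]$ with truncation on the total multi-index length, which is not automatic. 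Third, the explanation of the remainder estimate is off: the gain from $\cH^{-t_2}$ to $\cH^{t_1}$ does not come from the $|\mathrm{Im}\,z_j|^{-1}$ factors (those merely keep the integral finite against $\bar\partial\tilde f$), but from weighted resolvent bounds of the form $\|\langle A\rangle^\sigma R_{z_j}\|_{\cB(\cH)}\lesssim\langle z\rangle^\sigma/|\mathrm{Im}\,z_j|$, $\sigma\in[0,1]$, applied to the $n+1$ extra resolvents --- and the hypothesis $t_1+t_2+m<n+1$ is exactly what lets those weights absorb $t_1+t_2$ orders while leaving the $z$-integral convergent. Finally, the difficulty you flag at the end, that $K_0$ is a priori unrelated to $A$, is already resolved in the paper's framework: Assumption~A.ii together with Remark~\ref{norme} gives $\|\psi\|_s^2\simeq\sum_{a\in\Lambda}\langle a\rangle^{2s}\|\Pi_a\psi\|_0^2$, so the $\cH^s$-scale is equivalent to the $\langle A\rangle$-scale and Rasmussen's estimate transfers directly, with no need to commute $K_0^s$ through $R_{z_j}$.
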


\begin{corollary}
	\label{cor_commuti}
	Let $B\in\cA^{m'}_\rho$, let $f$ be as above and {let
        $A_1,...,A_d$ be quantum actions as in Assumption A}; let $N$ and $n$
	fulfill $N+m<\rho(n+1)$ then equation \eqref{formula_commuti} holds. Furthermore there exists $J$ and for any $s\in\R$ there exists a
	constant $C_{s,N}$ s.t.\ the following estimate holds
	\begin{equation}
	\label{resto_commuti}
	\left\|R_n(A,B)\right\|_{\cB(\cH^s;\cH^{s+N-m'})}\leq
	C_{s,N}\semi{m}JB\ .
	\end{equation}
\end{corollary}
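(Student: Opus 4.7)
The plan is to deduce Corollary \ref{cor_commuti} from Theorem \ref{commutator} by a regularization argument, combined with the graded algebra structure of $\cA_\rho$ and the functional calculus of Assumption A.iv. The formula \eqref{formula_commuti} is established first, and the remainder estimate \eqref{resto_commuti} is then obtained by identifying $R_n$ as an element of $\cA_\rho^{m+m'-\rho(n+1)}$, whose order is strictly less than $m'-N$ by the hypothesis $N+m < \rho(n+1)$.

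For the formula, I would introduce the smoothing regularization $B_\varepsilon := \chi_\varepsilon B \chi_\varepsilon$, where $\chi_\varepsilon := \chi(\varepsilon \langle A \rangle) \in \cA_1^{-\infty}$ for a cutoff $\chi \in C_c^\infty$ with $\chi \equiv 1$ near $0$ (Assumption A.iv provides this, since $\chi(\varepsilon\langle\cdot\rangle) \in S^{-\infty}_\varsigma$); here $\langle A \rangle := (\uno + \sum_j A_j^2)^{1/2}$, which commutes with each $A_i$ by the functional calculus, so
\begin{equation*}
\mathrm{Ad}_A^j B_\varepsilon = \chi_\varepsilon\, (\mathrm{Ad}_A^j B)\, \chi_\varepsilon \in \cB(\cH)
\end{equation*}
for every multi-index $j$. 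Theorem \ref{commutator} therefore applies to $B_\varepsilon$ and yields \eqref{formula_commuti} with $B$ replaced by $B_\varepsilon$. As $\varepsilon \to 0$, $\chi_\varepsilon \to \uno$ strongly on $\cH^{+\infty}$, and each term of the formula converges strongly on $\cH^{+\infty}$ to its counterpart with $B$; this gives \eqref{formula_commuti} as an operator identity on the dense subspace $\cH^{+\infty}$.

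For the estimate, first note that by A.iv, $\partial^j f \in S^{m-|j|}_1$ gives $\partial^j f(A) \in \cA_1^{m-|j|} \subset \cA_\rho^{m-|j|}$, with seminorms controlled by those of $f$; and by iterated application of the graded Lie algebra property I.iv, $\mathrm{Ad}_A^j B \in \cA_\rho^{m' + |j|(1-\rho)}$, with seminorms controlled by some $\semi{m'}{J}{B}$ where $J$ depends on $|j|$. Combined with the product continuity I.iii, each individual summand $\partial^j f(A)\, \mathrm{Ad}_A^j B$ lies in $\cA_\rho^{m + m' - |j|\rho}$. Taken separately, this only gives $R_n \in \cA_\rho^{m + m'-\rho}$ (the order of the worst summand), which is insufficient: the required gain down to $\cA_\rho^{m + m' - \rho(n+1)}$ comes from the Taylor-type cancellations encoded in \eqref{formula_commuti}.

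To extract the correct order of $R_n$, I would read off from the proof of Theorem 3 of \cite{Ras12} that $R_n$ admits an integral (Taylor-remainder) representation whose building blocks are $\partial^{n+1} f(A) \in \cA_1^{m-(n+1)}$ and $\mathrm{Ad}_A^{n+1} B \in \cA_\rho^{m' + (n+1)(1-\rho)}$, composed with bounded functions of $A$; by I.iii, this product lies in $\cA_\rho^{m + m' - \rho(n+1)}$, which is strictly contained in $\cA_\rho^{m'-N}$. The seminorm estimate \eqref{resto_commuti} then follows at once from the continuity of the algebra operations (I.iii, I.iv) and the functional calculus (A.iv), the index $J$ being the number of nested seminorm estimates needed to bound $\partial^{n+1} f(A)$ in terms of the $S^m_1$-seminorms of $f$ and $\mathrm{Ad}_A^{n+1} B$ in terms of $\semi{m'}{J}{B}$. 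The main obstacle is precisely this identification of the correct order of $R_n$: a direct subtraction in \eqref{formula_commuti} only yields the order of the worst summand, so one must either use the integral representation extracted from \cite{Ras12} or, as an alternative route, argue inductively on $n$ using the base bound $[B, f(A)] \in \cA_\rho^{m+m'-\rho}$ from I.iv together with the formula to peel off one factor of $\rho$ at a time while keeping track of the seminorm indices.
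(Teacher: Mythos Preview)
Your approach differs substantially from the paper's, and your argument for the estimate \eqref{resto_commuti} has a genuine gap.

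The paper's proof is a one-line reduction: one applies Theorem~\ref{commutator} not to $B$ itself but to the sandwiched operator $\tilde B:=\langle A\rangle^{s-m'-(n+1)(1-\rho)}B\langle A\rangle^{-s}$. Since $\langle A\rangle$ commutes with each $A_j$ and with $f(A)$, one has $Ad_A^j\tilde B=\langle A\rangle^{s-m'-(n+1)(1-\rho)}(Ad_A^jB)\langle A\rangle^{-s}$ and $R_n(A,\tilde B)=\langle A\rangle^{s-m'-(n+1)(1-\rho)}R_n(A,B)\langle A\rangle^{-s}$. The former lies in $\cA_\rho^{(|j|-(n+1))(1-\rho)}\subset\cB(\cH)$ for $|j|\le n+1$, so the right-hand side of the bound in Theorem~\ref{commutator} is finite and controlled by a seminorm of $B$. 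Choosing $t_2=0$ and $t_1=N+(n+1)(1-\rho)$, the hypothesis $t_1+t_2+m<n+1$ becomes precisely $N+m<\rho(n+1)$, and the bound on $R_n(A,\tilde B):\cH^0\to\cH^{t_1}$ unwinds to the desired bound on $R_n(A,B):\cH^s\to\cH^{s+N-m'}$. No regularization, no limit, and no inspection of \cite{Ras12} beyond its black-box statement are needed. (The paper's printed exponent and choice of $t_1$ carry a minor index shift, but the mechanism is exactly this.)

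Your route to \eqref{resto_commuti} instead aims at the stronger statement $R_n\in\cA_\rho^{m+m'-\rho(n+1)}$, and the obstacle you yourself flag is real: naive subtraction only places $R_n$ in $\cA_\rho^{m+m'-\rho}$. Neither of your two suggested fixes is carried out. ``Reading off'' an integral representation of $R_n$ from \cite{Ras12} whose ingredients are $\partial^{n+1}f(A)$, $Ad_A^{n+1}B$, and ``bounded functions of $A$'' is an assertion, not an argument: Rasmussen's remainder is built from resolvents and contour integrals, not from operators manifestly in $\cA_\rho$, and you would have to verify that the full integrand stays in $\cA_\rho$ with uniform seminorm control---this is not automatic from Assumptions~I and~A alone. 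The inductive ``peel off one $\rho$'' scheme is likewise unspecified: writing $R_1=[B,f(A)]-\sum_{|j|=1}\partial^jf(A)\,Ad_A^jB$ does not relate $R_1$ to an $R_0$ of any auxiliary problem in an obvious way, so it is unclear how the induction is meant to close. The sandwiching trick sidesteps all of this by never asking for pseudodifferential membership of $R_n$ and using only the operator-norm estimate already contained in Theorem~\ref{commutator}.
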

To get the corollary, just apply Theorem \ref{commutator} to the
operator $\langle A\rangle^{-m'-n(1-\rho)+s}B\langle A\rangle^{-s}$ and take 
$t_2=0$, $t_1=N$.\qed

\section{Steepness in 2 degrees of freedom}\label{minima}

We will use here a condition equivalent to steepness introduced
in \cite{Nied06}.

\begin{theorem}[Niederman] \label{nie06} Let $h$ be a function real analytic
        in an open set $\cU\subset\R^d$. Then $h$ is steep on any
        compact set $\Sigma\subset \cU$ if and only if {$h$ has no
          critical points in $\cU$,} and its restriction $h\big|_{M}$
        to any affine subspace {$M\subset \R^d$} admits only isolated
        critical points.
\end{theorem}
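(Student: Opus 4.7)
The plan is to prove the two implications separately, with real analyticity of $h$ essential in both directions. The forward direction (steepness $\Rightarrow$ non-degeneracy) follows fairly directly from \eqref{steep}, while the reverse direction is the deep part of the statement and requires the \L ojasiewicz inequality to upgrade a qualitative non-degeneracy assumption into the uniform quantitative bounds of Definition \ref{steep.def}.

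For the forward direction, letting $\xi \to 0^+$ in \eqref{steep} forces $\inf_{a \in \cU} \| \omega(a)\| > 0$, so $h$ has no critical points in $\cU$. Suppose then that $h\big|_M$ admits a non-isolated critical point at $a_0$, where $M = a_0 + V$. At $a_0$ one has $\omega(a_0) \perp V$, so $V$ is an admissible subspace in Definition \ref{steep.def}. Non-isolation produces a sequence of critical points $a_n = a_0 + \eta_n u_n \in M$ of $h\big|_M$, with $u_n \in V$ unit and $\eta_n \to 0^+$, where $\Pi_V \omega(a_n) = 0$. By real analyticity the zero set of $\Pi_V \omega$ inside $V$ contains $a_0$ as a non-isolated point, hence by the curve selection lemma contains an entire real analytic arc through $a_0$; parametrizing this arc one checks that $\max_{\eta \in [0,\xi]} \min_{u \in V,\,\|u\|=1} \| \Pi_V \omega(a_0 + \eta u)\|$ decays faster than any polynomial in $\xi$, contradicting the bound $B_s \xi^{\alpha_s}$.

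For the reverse direction I would argue by contradiction: if $h$ fails to be steep on some compact $\Sigma \subset \cU$, then for some fixed $s \in \{1,\dots,d-1\}$ and arbitrarily large exponents (or arbitrarily small coefficients) there exist points $a \in \Sigma$, subspaces $M \perp \omega(a)$ of dimension $s$, and radii $\xi > 0$ violating \eqref{steep}. Compactness of $\Sigma$ and of the Grassmannian lets one extract limits $a \to a_\infty$ and $M \to M_\infty$; applying the \L ojasiewicz inequality to the real analytic function $(\eta, u) \mapsto \|\Pi_{M_\infty} \omega(a_\infty + \eta u)\|^2$ on a compact neighborhood, the sub-polynomial decay rate forces this function to vanish identically along a real analytic curve in $a_\infty + M_\infty$ through $a_\infty$, producing a non-isolated critical point of $h\big|_{a_\infty + M_\infty}$ and contradicting the hypothesis.

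The main obstacle is precisely this reverse direction, specifically the extraction of uniform quantitative exponents and coefficients (the steepness indices $\alpha_s$ and coefficients $B_s$) from the purely qualitative hypothesis of isolated critical points on restrictions. The \L ojasiewicz inequality for real analytic functions (together with the curve selection lemma) is the indispensable tool, providing the uniform exponents; the complete argument is the content of \cite{Nied06}, to which we refer for the details.
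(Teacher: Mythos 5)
The paper does not prove Theorem \ref{nie06}: it is stated as a known result, attributed to Niederman and referenced to \cite{Nied06}, and then used as a black box to derive Theorem \ref{steep!} and Corollary \ref{steep.fin}. There is therefore no in-paper proof for your attempt to be compared with. Your forward direction is essentially correct but can be tightened: once the curve selection lemma produces a real analytic arc of critical points of $h\big|_M$ through $a_0$, the distance along the arc fills a full interval $[0,\eta_0)$, so the inner minimum in \eqref{steep} vanishes \emph{identically} (not merely super-polynomially) for all small $\eta$, contradicting the steepness bound at once. For the reverse direction, you correctly identify the \L ojasiewicz inequality and subanalytic geometry as the crucial tools, but the compactness-and-limit sketch as written does not yet explain how the exponents $\alpha_s$ and constants $\tB_s$ are made uniform over all $a\in\Sigma$ and all admissible $s$-planes $M\perp\omega(a)$; this uniformity is the real substance of Niederman's proof and is not a one-step consequence of a single \L ojasiewicz estimate at a limit point. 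Since the paper itself defers the proof to \cite{Nied06}, deferring that part is the appropriate move, and your outline is a fair summary of the ideas involved.
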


If $d=2$, then, by homogeneity, one gets a condition easy to
verify. To state the corresponding result, we recall the definition of
the Arnold determinant $\cD$:
\begin{equation}
\label{arnold}
\cD:=\det\left[\begin{matrix}
\frac{\partial^2 h}{\partial a^2} & \left(\frac{\partial h}{\partial
	a}\right)^t
\\
\frac{\partial h}{\partial a}& 0
\end{matrix}\right] \ .
\end{equation}

\begin{theorem}
	\label{steep!}
	Let $\cC_e\subset \R^2$ be an open convex cone, and let $h:\cC_e\to\R$ be a real
	analytic function homogeneous of degree $\td$. Assume that there exists
	$\bar a\in\cC_e$ s.t.\ $\cD(\bar a)\not=0$, and that $h(a)>0$ $\forall
	a\in\cC_e$, then $h$ is steep in {any} compact subset of $\cC_e$. 
\end{theorem}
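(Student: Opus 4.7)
The plan is to apply Niederman's criterion (Theorem \ref{nie06}) with $\cU = \cC_e$: I need to verify that (a) $h$ has no critical points in $\cC_e$, and (b) for every affine subspace $M$ of $\R^2$ the restriction $h|_M$ has only isolated critical points in $M\cap\cC_e$. Condition (a) is immediate from Euler's identity $a\cdot\nabla h(a) = \td\, h(a)$ together with positivity of $h$, since $\nabla h(a)=0$ would force $h(a)=0$.

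For (b), the only nontrivial affine subspaces are lines. If the line $L$ passes through the origin, parameterizing $L = \{tu\}$ gives $h(tu) = t^{\td} h(u)$, whose unique critical point $t=0$ lies outside $\cC_e$. If $L$ does not pass through the origin and $h|_L$ has a non-isolated critical point $a_0\in L\cap\cC_e$, then by real analyticity $h$ is constant along the connected component $L_0$ of $L\cap\cC_e$ containing $a_0$. I would then argue by contradiction that this forces $\cD \equiv 0$ on the connected component $\cC_e^0$ of $\cC_e$ containing $a_0$ (without loss of generality the same as the component containing $\bar a$; Niederman's criterion can be checked component by component, and any compact subset of $\cC_e$ meets only finitely many components).

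The propagation step goes as follows. Denoting by $u$ a direction of $L$, constancy of $h$ on $L_0$ means $u\cdot\nabla h \equiv 0$ on $L_0$. By $(\td-1)$-homogeneity of $\nabla h$, the same identity holds on every dilated copy $\lambda L_0$ with $\lambda>0$, and the union $\bigcup_{\lambda>0}\lambda L_0$ is an open $2$-dimensional subset of $\cC_e^0$. Real analyticity of $u\cdot\nabla h$ then propagates $u\cdot\nabla h \equiv 0$ to all of $\cC_e^0$. Differentiating once more yields $(\partial^2 h)\,u \equiv 0$, so $u \in \ker(\partial^2 h)$ and, by symmetry of the Hessian, the range of $\partial^2 h$ is contained in $u^\perp$; also $\nabla h \parallel u^\perp$ everywhere on $\cC_e^0$.

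To finish, a direct expansion of the $3\times3$ determinant \eqref{arnold} with $d=2$ gives
\begin{equation}
\cD(a) = -\bigl(h_{11} h_2^{\,2} - 2 h_{12} h_1 h_2 + h_{22} h_1^{\,2}\bigr) = -(\partial^2 h)(v,v),
\end{equation}
with $v := (-h_2, h_1)$ orthogonal to $\nabla h$. Since $\nabla h \parallel u^\perp$ one has $v \parallel u$, and $u\in \ker(\partial^2 h)$ forces $\cD \equiv 0$ on $\cC_e^0$, contradicting $\cD(\bar a)\neq 0$. The main delicate point is the propagation step: transferring the rigidity from a single segment $L_0$ to the whole connected component $\cC_e^0$ requires combining homogeneity-induced scaling with analytic continuation, and checking that the dilated fan $\bigcup_{\lambda>0}\lambda L_0$ is genuinely $2$-dimensional and contained in $\cC_e^0$.
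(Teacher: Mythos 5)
Your proof is correct but takes a genuinely different route to the paper's at the crux of the argument. Both start from Niederman's criterion (Theorem \ref{nie06}), and both dispose of critical points of $h$ and of restrictions to radial lines using homogeneity plus positivity. The divergence is in how one handles restrictions to nonradial lines. The paper's proof works ``outward from the good set'': since $\cD$ is homogeneous of degree $3\td-4$ and not identically zero (because $\cD(\bar a)\neq 0$), its zero set $\cS$ is a finite union of rays through the origin, so every nonradial line meets $\{\cD\neq 0\}$; at such a point quasiconvexity (the classical equivalent of $\cD\neq 0$) shows the second-order Taylor coefficient of $h$ along the line is nonzero when the first one vanishes, so the restriction is a nontrivial analytic function and has isolated critical points. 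You instead work ``outward from the bad set'': you suppose some restriction $h|_L$ is constant on a segment $L_0$, propagate the identity $u\cdot\nabla h\equiv 0$ from $L_0$ to the 2-dimensional dilated fan $\bigcup_{\lambda>0}\lambda L_0$ by $(\td-1)$-homogeneity of $\nabla h$, then to all of the connected component by analytic continuation, and finally feed the resulting kernel direction into the explicit identity $\cD = -(\partial^2 h)(v,v)$ with $v=(-h_2,h_1)\perp\nabla h$ to conclude $\cD\equiv 0$, contradicting $\cD(\bar a)\neq 0$. Your approach is more algebraic and makes the role of the Arnold determinant transparent via the identity $\cD=-(\partial^2 h)(v,v)$ (which the paper exploits only implicitly through the quasiconvexity dictionary); the paper's is more geometric and avoids the propagation-by-dilation step. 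Both are sound. Two small remarks: (i) the Jacobian nondegeneracy of the fan parametrization hinges on $L$ not passing through the origin (so that $a_0\not\parallel u$), which you correctly flagged as the delicate point; (ii) the reduction to a single connected component of $\cC_e$ is not really ``without loss of generality'' if $\cC_e$ is disconnected, since $\cD(\bar a)\neq 0$ then says nothing about the other components — but the paper's proof has the same implicit assumption, and in all applications $\cC_e$ is convex.
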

\proof {Step 1. We prove that $h$ has no critical points in
$\cC_e$. Indeed, if $\bar a$ is a critical point of $h$ then, $\forall
t>0$, also
$t\bar a$ is a critical point of $h$. It follows that $h$ is
constant along the line $t\bar a$, $t\in\R^+$, but one has
$h(t\bar a)=t^{\td}h(\bar a)$ which is not constant, since $h$ is
positive. }

{Step 2. We show that the set where $\cD(a)$ vanish is composed of
radial lines. Assume by contradiction that it vanishes on a curve
intersecting transversally (topologically) a radial line $\Delta$, then, since
$\cD$ is a homogeneous function, it vanishes on a whole open cone
containing $\Delta$, thus by analyticity it is identically zero, which
contradicts the assumptions. }

{Step 3. The restriction of $h$ to any non radial straight line
$\Delta$ is a nontrivial analytic function. Indeed, by step 2,
there exists at least one point $a_0\in\Delta$ with $\cD(a_0)\not=0$,
and recall that $\cD(a_0)\not =0$ implies that $h$ is quasiconvex at $a_0$.
Write $\Delta$ as the set of the points of the form $a=t
n+a_0$, with $n$ a unitary vector, one has
	$$
	h(tn+a_0)=h(a_0)+t dh(a_0)n+ \frac{1}{2}d^2h(a_0)(n,n)t^2+...
	$$
        but, if $dh(a_0)n\not =0$, then this is a nontrivial function of
	$t$, while, if $dh(a_0)n=0$ then by quasiconvexity one has
	$d^2h(a_0)(n,n)\not=0$, so also in this case this is a nontrivial
	function of $t$.}}
        
{Step 4. The restriction of $h$ to any radial line is a nontrivial
analytic function. One has
$$
h(t a_0)=t^\td h(a_0)\ ,
$$
which is a nontrivial function of $t$, since $h(a_0)\not=0$.} \qed

\section{{Further technical lemmas}}

In this appendix we collect a few technical results that we use to
prove Theorem \ref{teo.invarianza}.
\begin{remark}\label{ep.piccolo}
	{For $x >0$, $y>1$, $C>0$ and $0<a<1$, one has}
	$$
	x < C \left( y + x^{a}\right)\quad \Longrightarrow \quad \exists
        C'>0\ s.t.\  	x < C' y\,.
        $$
\end{remark}
\begin{lemma}\label{rmk.a.or.b}
	If $a, b \in \R^2$, $C>0$ and $0< \mu < 1$ are such that ${\|a\|, \|b\| \geq 1}$,
	$$
	\| a - b\| \leq C {\| b\|}^{\mu}\,,
	$$
	then one has
        \begin{align*}
	\| a - b\| \leq C {\| b\|}^{\mu} \lesssim_{\mu} C {\| a\|}^{\mu} + C \| a -b\|^{\mu}\,,
\\
	\| a - b\| \lesssim_{C,\mu} {\| a\|}^{\mu}\,.
        \end{align*}
\end{lemma}
{The proof is a simple computation and is omitted.}
\begin{lemma}\label{incrementini}
	Let $a, b \in \Lambda$ and $k, h, l \in \Z^d$ {be such that $a + k, b + l \neq 0$}. Suppose that there exist positive constants $C, D, R, \gamma$, $\mu$ and $\delta$  such that $\mu < 1-\delta$ and
	\begin{equation}\label{brutte}
	\begin{gathered}
	\left|\scala{\omega(a + k)}{h}\right| \leq C {\| a + k \|}^{\mom-\gamma} \| h\|\,, \\
	\| a + k \| \geq R\,, \quad \|h\| \leq D {\|a + k \|}^{\mu}\,, \quad \|k\| \leq D {\| a + k\|}^{\mu}\,, 
	\end{gathered}
	\end{equation}
	\begin{equation}\label{ancora.brutte}
	\| b -a\| \leq D {\| a \|}^{1 -\delta}\,, \quad \| l\| \leq D {\|  b + l\|}^{\mu}\,.
	\end{equation}
	Then there exist positive constants $C^+, D^+$, depending only on $C, D, \mom, \gamma, \mu, \delta$, such that
	\begin{equation}\label{belle}
	\left|\scala{\omega(b + l)}{h}\right| \leq C^+ {\| b + l\|}^{\mom-\min\{\gamma, \delta\}} \|h\|\,,\quad \|h\| \leq D^+ {\| b + l \|}^{\mu}\,. %\quad \|k\| \leq D^+ \langle b + l \rangle^{1 -\delta}\,.
	\end{equation}
\end{lemma}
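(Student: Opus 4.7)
The statement is a perturbation lemma: if $\omega(a+k)\cdot h$ is small (a ``resonance-like'' bound at scale $\|a+k\|^{\tM-\gamma}$) and $b+l$ is close to $a+k$, then a similar bound holds for $\omega(b+l)\cdot h$, possibly with a slightly worse exponent $\min\{\gamma,\delta\}$. The plan is to (i) show $\|a+k\|\simeq\|b+l\|$ and control the displacement $\|(b+l)-(a+k)\|$, (ii) apply the mean value theorem to $\omega$ along the segment joining the two points using homogeneity of $\omega'$, and (iii) combine with the given bound.

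First, I would estimate
\begin{equation*}
\|(b+l)-(a+k)\|\leq \|b-a\|+\|k\|+\|l\|\leq D\|a\|^{1-\delta}+D\|a+k\|^{\mu}+D\|b+l\|^{\mu}.
\end{equation*}
Since $\|k\|\leq D\|a+k\|^\mu$ and $\mu<1$, one has $\|a\|\simeq \|a+k\|$, hence $\|a\|^{1-\delta}\simeq\|a+k\|^{1-\delta}$. Plugging this in gives $\|b+l\|\lesssim \|a+k\|+D\|b+l\|^\mu$, and Remark \ref{ep.piccolo} (applicable since $\|a+k\|\geq R$ can be taken large) yields $\|b+l\|\lesssim\|a+k\|$; arguing symmetrically, $\|a+k\|\lesssim\|b+l\|$. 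Then the bound $\|b+l\|^\mu\lesssim \|a+k\|^\mu\lesssim\|a+k\|^{1-\delta}$ (using crucially $\mu<1-\delta$) lets me conclude
\begin{equation*}
\|(b+l)-(a+k)\|\lesssim \|a+k\|^{1-\delta}\simeq \|b+l\|^{1-\delta}.
\end{equation*}
This also immediately gives the second inequality in \eqref{belle}: $\|h\|\leq D\|a+k\|^\mu\lesssim \|b+l\|^\mu$.

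Next, by convexity of $\cC_e$, the segment $\{(a+k)+t((b+l)-(a+k))\ :\ t\in[0,1]\}$ lies in $\cC_e$, and by the previous estimate the point $(a+k)+t((b+l)-(a+k))$ has norm comparable to $\|a+k\|$. Writing
\begin{equation*}
\omega(b+l)\cdot h-\omega(a+k)\cdot h=\int_0^1 \bigl(D\omega((a+k)+t((b+l)-(a+k)))((b+l)-(a+k))\bigr)\cdot h\,dt,
\end{equation*}
and using that $D\omega$ is homogeneous of degree $\tM-1$ at infinity (so bounded by a constant times $\|a+k\|^{\tM-1}$ on the segment), I obtain
\begin{equation*}
|\omega(b+l)\cdot h-\omega(a+k)\cdot h|\lesssim \|h\|\cdot\|a+k\|^{1-\delta}\cdot\|a+k\|^{\tM-1}=\|h\|\,\|a+k\|^{\tM-\delta}.
\end{equation*}

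Finally, combining with the hypothesis on $\omega(a+k)\cdot h$,
\begin{equation*}
|\omega(b+l)\cdot h|\leq C\|a+k\|^{\tM-\gamma}\|h\|+C_1\|a+k\|^{\tM-\delta}\|h\|\lesssim \|a+k\|^{\tM-\min\{\gamma,\delta\}}\|h\|,
\end{equation*}
and the norm equivalence $\|a+k\|\simeq\|b+l\|$ promotes this to the desired bound on $\|b+l\|^{\tM-\min\{\gamma,\delta\}}$. The only genuinely subtle point is the bookkeeping in Step (i), where the two implicit inequalities involving both $\|a+k\|$ and $\|b+l\|$ must be decoupled by Remark \ref{ep.piccolo}; everything else is linear algebra plus homogeneity. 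The condition $\mu<1-\delta$ enters precisely to ensure that the $\|k\|,\|l\|$ contributions are absorbed into the $\|b-a\|$ term, so that the displacement is controlled by a single power $\|a+k\|^{1-\delta}$ responsible for the $\delta$ loss in the final exponent.
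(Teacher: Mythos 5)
Your proof follows essentially the same strategy as the paper's: establish the norm equivalence $\|a+k\|\simeq\|b+l\|$ via Remark \ref{ep.piccolo}, bound the displacement $\|\eta\|:=\|(b+l)-(a+k)\|\lesssim\|a+k\|^{1-\delta}\simeq\|b+l\|^{1-\delta}$, then use the mean value theorem together with homogeneity of $\omega$. The bookkeeping of exponents matches the paper's; in particular the role of $\mu<1-\delta$ is exactly as you describe. Two issues are worth flagging.

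First, the appeal to ``convexity of $\cC_e$'' is both unjustified and unnecessary. The points $a+k$ and $b+l$ need not lie in $\cC_e$ (they are arbitrary shifts of lattice points by $\Z^d$ vectors), so you cannot assert that the segment lies in $\cC_e$. Fortunately this does not matter here: by Assumption H, $h_0\in C^\infty(\R^d)$ is homogeneous at infinity on all of $\R^d$, so $\omega$ and $\partial\omega/\partial a$ are defined globally and the segment need not stay in any cone. (Convexity of $\cC_e$ \emph{is} used in the geometric part, e.g.\ in Lemma \ref{lem.piem.om.above}, but not here.)

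Second, and this is the genuine gap, your bound $\|D\omega\|\lesssim\|a+k\|^{\tM-1}$ on the segment uses that $\partial\omega/\partial a$ is homogeneous of degree $\tM-1$ \emph{at infinity}, i.e.\ only for arguments outside $B_{1/4}$. You justify this by saying ``$\|a+k\|\geq R$ can be taken large'', but $R$ is a fixed hypothesis of the lemma, and the output constants $C^+, D^+$ are required to be independent of $R$. The paper's proof therefore splits into two regimes: for $\|a+k\|\geq R'$, with $R'$ depending only on $D,\mu,\delta$, one has $\frac{1}{2}\|a+k\|\leq\|a+k+t\eta\|\leq 2\|a+k\|$ and the homogeneity argument applies; for $R<\|a+k\|<R'$, homogeneity is replaced by $K_2:=\sup_{\|\tilde a\|\in[R,R']}\|\partial\omega(\tilde a)/\partial a\|$, which is finite by smoothness of $h_0$, and one checks separately that this gives a bound of the required form with constants not depending on $R$. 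Your proof handles only the first regime; to complete it you would need to add this case distinction (or simply observe that for the applications in the paper $\tR$ is always chosen large, and state the lemma with that caveat).
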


\begin{proof}
The second of \eqref{belle} is simply obtained by triangle
inequality and exploiting $\mu<1$ and $1-\delta<1$. To obtain the
first one write
\begin{equation}\label{scal.om}
	\left|\scala{\omega(b+l)}{h}\right| \leq \left|\scala{\omega(a + k)}{h}\right| + \int_{0}^{1}\left| \scala{\left(\frac{\partial \omega(a+ k + t \eta)}{\partial a} \eta \right)}{h}\right|\,dt\,,
	\end{equation}
	{ with $\eta := b + l - (a + k)$. Due to homogeneity of the function $\omega$, this also implies}
	\begin{equation}\label{conti.2}
	\begin{aligned}
	\left|\scala{\omega(b+l)}{h}\right|
	&\leq \left|\scala{\omega(a + k)}{h}\right|  + K \int_{0}^{1}\| a + k +t\eta\|^{\mom-1} \,dt \, \| \eta \| \|h\|\,,
	\end{aligned}
	\end{equation}
	with
	$$
	K := \sup_{\left\| \hat{a}\right\|=1} \left\{\left\| \frac{\partial \omega(\hat{a})}{\partial a}\right\| \right\}\,.
	$$
Then the proof is concluded using triangular inequality.
\end{proof}

\begin{lemma}\label{lemma.proiettori}
	Let $\omega_* \in \R^d$, $\varepsilon >0$ and $M \subset \R^d$ be such that
	\begin{equation}\label{lil}
	\| \Pi_{M} \omega_*\| \leq \varepsilon\,.
	\end{equation}
	Define $M':= \Pi_{\omega_*^\bot} M$. If $\varepsilon \leq \frac{1}{2} \| \omega_*\|$, then one has
	\begin{equation}\label{near}
	\| \Pi_{M'} - \Pi_M\| \leq 9 \varepsilon \| \omega_*\|^{-1}\,.
	\end{equation} 
\end{lemma}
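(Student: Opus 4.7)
The plan is to reduce the estimate to an elementary two-dimensional computation by working on the subspace $V := M + \R\hat{\omega}$, where $\hat{\omega} := \omega_*/\|\omega_*\|$. Setting $\alpha := \varepsilon/\|\omega_*\|\leq 1/2$, the hypothesis reads $\|\Pi_M\hat{\omega}\|\leq \alpha$; in particular $\hat{\omega}\notin M$, so $\dim V=\dim M+1$, and the restriction $\Pi_{\omega_*^\perp}|_M\colon M\to M'$ is injective (if $v\in M$ satisfies $\Pi_{\omega_*^\perp}v=0$ then $v\in \R\hat\omega\cap M=\{0\}$), giving $\dim M'=\dim M$.

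First I would show that $\Pi_M-\Pi_{M'}$ vanishes on $V^\perp$. Indeed, if $w\perp M$ and $w\perp \hat\omega$, then since $M'\subseteq M+\R\hat\omega=V$ we also have $w\perp M'$; hence $\Pi_M w=\Pi_{M'}w=0$. Thus it suffices to estimate $(\Pi_M-\Pi_{M'})|_V$.

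Next I would identify $M$ and $M'$ as hyperplanes inside $V$. The inclusion $M'\subseteq \omega_*^\perp\cap V$ is immediate, and the dimension count above forces $M'=\omega_*^\perp\cap V$; hence the unit normal to $M'$ inside $V$ is $\hat{\omega}$ itself. On the other hand, the unit normal to $M$ inside $V$ is $\hat q:=q/\|q\|$ with $q:=\hat\omega-\Pi_M\hat\omega$, since $q\perp M$ and $\|q\|^2=1-\|\Pi_M\hat\omega\|^2\geq 1-\alpha^2>0$. Consequently, on $V$ the difference of the two codimension-one projections is the difference of two rank-one projections onto the complementary normals:
\begin{equation*}
(\Pi_M-\Pi_{M'})|_V \;=\; \Pi_{\R\hat\omega}-\Pi_{\R\hat q}.
\end{equation*}

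Finally, a standard $2\times 2$ calculation (the operator is self-adjoint, rank $\leq 2$, traceless, and $\mathrm{tr}(A^2)=2(1-(\hat\omega\cdot\hat q)^2)$) gives $\|\Pi_{\R\hat\omega}-\Pi_{\R\hat q}\|=\sqrt{1-(\hat\omega\cdot\hat q)^2}$. Since $\hat\omega\cdot\hat q=\|q\|$ and $1-\|q\|^2=\|\Pi_M\hat\omega\|^2$, this yields
\begin{equation*}
\|\Pi_M-\Pi_{M'}\|\;=\;\|\Pi_M\hat\omega\|\;=\;\frac{\|\Pi_M\omega_*\|}{\|\omega_*\|}\;\leq\;\frac{\varepsilon}{\|\omega_*\|},
\end{equation*}
which is in fact sharper than the claimed $9\varepsilon/\|\omega_*\|$. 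The only nontrivial point is the identification $M'=\omega_*^\perp\cap V$, which uses $\hat\omega\notin M$ (ensured by $\alpha<1$); everything else is elementary.
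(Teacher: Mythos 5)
Your proof is correct and follows essentially the same strategy as the paper's: both reduce the problem to the space $V=M+\R\omega_*$ (the paper calls it $E$) and express $\Pi_M$, $\Pi_{M'}$ as $\Pi_V$ minus rank-one projections onto the respective normals $\hat q$, $\hat\omega$. You go one step further than the paper's ``from which the thesis easily follows'' by computing the exact operator norm of the difference of two rank-one projections via the trace identity, which yields the sharper bound $\varepsilon\|\omega_*\|^{-1}$ in place of $9\varepsilon\|\omega_*\|^{-1}$.
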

\begin{proof}
	Define $w= \Pi_{M} \omega_*$ and $v = \Pi_{M^\bot} \omega_*$, then one has
	\begin{equation}\label{tutta}
	\omega_* = v + w\,, \quad v \in M^\bot\,, \quad w \in M\,,
	\end{equation}
	with
	\begin{equation}\label{solo}
	\| \omega_* - v\| \leq \varepsilon\,.
	\end{equation}
	Denote $M \oplus v:=M \oplus \operatorname{span} v$; 
        we first prove that
	\begin{equation}\label{somma}
	M \oplus v = M' \oplus \omega_*\,.
	\end{equation}
	By the very definition of $M'$ and $v$, one has
	$$
	M'\subset M\oplus v\ \Longrightarrow\ M'\oplus\omega_*\subset M \oplus
	\omega_* =M\oplus v\ .
	$$
To prove the opposite inclusion, remark that for any $m \in M$, one has $\Pi_{M'} m = \Pi_{\omega_*^\bot} m$
	$$
	m = \Pi_{\omega_*}m + \Pi_{\omega_*^\bot}m = \Pi_{\omega_*}m+\Pi_{M'}m   \in M' \oplus \Span\{\omega_*\}\,,
	$$
	so we have
	$$
	M\subset M'\oplus \omega_*\ \Longrightarrow\ M'\oplus\omega_*\supset M \oplus
	\omega_* =M\oplus v\ .
	$$
Since 
	$M\perp v=M'\perp \omega_*$, one has that, for any $u \in \R^d$
	\begin{equation}\label{pie}
	\Pi_M u = \Pi_{M\perp v} u- \frac{\scala{v}{u}}{\| v\|^2} v\,, \quad \Pi_{M'} u = \Pi_{M'\perp \omega_*} - \frac{\scala{\omega_*}{u}}{\| \omega_*\|^2} \omega_*\,.
	\end{equation}
	thus
	\begin{align*}
	\| \Pi_{M} u - \Pi_{M'} u \| & = \left\| \frac{\scala{v}{ u}}{\| v\|^2} v - \frac{\scala{\omega_*}{ u}}{\| \omega_*\|^2} \omega_* \right\|\,,
	\end{align*}
	{        from which the conclusion follows.}
\end{proof}

\bibliographystyle{alpha}
%\bibliographystyle{abbrv}
%\bibliography{biblio}

\end{document}